\newcommand{\bvec}[1]{\mathbf{#1}}
\newcommand{\abs}[1]{\left\lvert#1\right\rvert}
\newcommand{\norm}[1]{\left\lVert#1\right\rVert}
\newcommand{\normET}[1]{\left\lVert#1\right\rVert_{E,\mathcal{T}}}
\newcommand{\average}[1]{\left\langle#1\right\rangle}
\newcommand{\ud}{\,\mathrm{d}}
\newcommand{\Ecut}{E_{\mathrm{cut}}}
\newcommand{\mc}[1]{\mathcal{#1}}
\newcommand{\wt}[1]{\widetilde{#1}}
\newcommand{\vn}{\bvec{n}}
\newcommand{\vr}{\bvec{r}}
\newcommand{\vq}{\bvec{q}}
\newcommand{\vw}{\bvec{w}}
\newcommand{\dist}{\operatorname{dist}} 
\newcommand{\Vhxc}{V_{\mathrm{hxc}}}
\newcommand{\Vion}{V_{\mathrm{ion}}}
\newcommand{\eff}{\mathrm{eff}}
\newcommand{\DG}{\mathrm{DG}}
\newcommand{\ie}{\textit{i.e.}\xspace}
\newcommand{\eg}{\textit{e.g.}\xspace}
\newcommand{\jump}[1]{\big[\hspace{-0.7mm} \big[ #1 \big]
  \hspace{-0.7mm} \big]} 
\newcommand{\mean}[1] {\big\{ \hspace{-0.7mm} \big\{ #1 \big\}
  \hspace{-0.7mm} \big\}}
\DeclareMathOperator{\spanop}{span}
\renewcommand{\Re}{\mathrm{Re}}
\newtheorem{remark}[theorem]{Remark}
\newtheorem{assumption}[theorem]{Assumption}
\begin{document}

\title{A posteriori error estimator for adaptive local basis functions
to solve Kohn-Sham density functional theory}
\author{
Jason Kaye \thanks{Part III of the Mathematical Tripos, Department of Applied Mathematics and Theoretical
Physics, University of Cambridge, Cambridge, UK, jk582@cam.ac.uk.} \and
Lin Lin \thanks{
Computational Research Division, Lawrence Berkeley National
Laboratory, Berkeley, CA 94720, linlin@lbl.gov.} 
\and Chao Yang \thanks{Computational Research Division, Lawrence Berkeley National
  Laboratory, Berkeley, CA 94720, cyang@lbl.gov.}
}

\pagestyle{myheadings} \markboth{A POSTERIORI ESTIMATOR FOR ADAPTIVE LOCAL
BASIS}{J. KAYE AND L. LIN AND C. YANG}\maketitle

\begin{abstract}
Kohn-Sham density functional theory is one of the most widely used
electronic structure theories.  The recently developed adaptive local basis
functions form an accurate and systematically improvable basis set for
solving Kohn-Sham density functional theory using discontinuous Galerkin
methods, requiring a small number of basis functions per atom.  In this
paper we develop residual-based a posteriori error estimates for the
adaptive local basis approach, which can be used to guide non-uniform basis
refinement for highly inhomogeneous systems such as surfaces and large
molecules. The adaptive local basis functions are non-polynomial basis
functions, and standard a posteriori error estimates for $hp$-refinement
using polynomial basis functions do not directly apply. We generalize the
error estimates for $hp$-refinement to non-polynomial basis functions. We
demonstrate the practical use of the a posteriori error estimator in
performing three-dimensional Kohn-Sham density functional theory
calculations for quasi-2D aluminum surfaces and a single-layer graphene
oxide system in water.
\end{abstract}

\begin{keywords}
\smallskip
  Kohn-Sham density functional theory, a posteriori error estimator,
  adaptive local basis function, discontinuous Galerkin method
\end{keywords}

\begin{AMS}
	65N15,65N25,65N30,65Z05
\end{AMS} 

\section{Introduction}\label{sec:intro}
In this paper we consider an a posteriori error estimator of
the eigenvalues and eigenvectors of the following linear eigenvalue
problem
\begin{equation}
  \begin{split}
    &\left(-\frac{1}{2}\Delta + V\right) \psi_{i} = \varepsilon_{i}\psi_{i}, \\ 
    &\int \psi^{*}_{i}(\vr) \psi_{j}(\vr) \ud \vr = \delta_{ij}, \quad
    i,j=1,\ldots,N.
  \end{split}
  \label{}
\end{equation}
$\{\psi_i\}_{i=1}^{N}$ are the eigenvectors corresponding to the lowest
$N$ eigenvalues $\{\varepsilon_i\}_{i=1}^{N}$.
This problem arises in solving the Kohn-Sham nonlinear eigenvalue
problem
\begin{equation}
  \begin{split}
  &H[\rho]\psi_{i} = \varepsilon_{i} \psi_{i},\\
	&\rho(\vr) = \sum_{i=1}^{N} \abs{\psi_{i}(\vr)}^2, \quad \int
  \psi^{*}_{i}(\vr) \psi_{j}(\vr) \ud \vr = \delta_{ij},
  \end{split}
  \label{eqn:KS}
\end{equation}
where $N$ is the number of electrons (spin degeneracy is omitted
here for simplicity), $\{\varepsilon_{i}\}_{i=1}^{N}$ are the Kohn-Sham
eigenvalues, and $\{\psi_{i}\}_{i=1}^{N}$ are
called the Kohn-Sham eigenfunctions or orbitals. These eigenfunctions
define the electron density $\rho(\vr)$, which in
turn defines the Kohn-Sham Hamiltonian 
\begin{equation}
H[\rho] = -\frac12 \Delta + \Vhxc[\rho] + \Vion,
\label{eqn:ksham}
\end{equation}
where $\Delta$ is the Laplacian operator for characterizing the kinetic
energy of electrons, $\Vhxc[\rho]$ is a nonlinear
function of $\rho$ which includes the 
electro-static interaction (Hartree) potential among electrons (h) and the
exchange-correlation potential (xc), and $\Vion$ is the electron-ion interaction
potential, which is independent of $\rho$.  We denote by $\Omega$ the
global computational domain, and for simplicity we assume each
eigenfunction $\psi_{i}$ has periodic boundary conditions on $\Omega$.
This nonlinear eigenvalue problem is the key problem to be solved in the
Kohn-Sham density functional theory
(KSDFT)~\cite{HohenbergKohn1964,KohnSham1965}, which is the most widely
used electronic structure theory for studying properties of molecules,
solids and other nano structures.  

Since the eigenvalue problem (\ref{eqn:KS}) is nonlinear, it
is often solved iteratively by a class of algorithms called {\em
self-consistent field iterations} (SCF)~\cite{Martin2004}. At each SCF
step, a linear eigenvalue problem with a fixed Kohn-Sham Hamiltonian
defined by a fixed electron density $\rho$ \eqref{eqn:ksham} is solved.
The solution to this linear eigenvalue problem is used to update
$\rho$ and $H$ in the SCF iteration.  Solving \eqref{eqn:ksham} is the most
computationally expensive part of the SCF iteration. Although the
asymptotic complexity of the computation with respect to the number of
atoms depends on the algorithm used to solve the algebraic eigenvalue
problem, the prefactor, which is related to the number of basis
functions per atom, is characterized by how the problem is discretized.
In this paper, we consider a discretization scheme in which 
an eigenfunction of $H[\rho]$ is expressed as a
linear combination of basis functions that have localized nonzero support.
The use of these localized basis functions yields a compact and yet
sparse representation of the Kohn-Sham Hamiltonian, so that a relatively
small prefactor in SCF iteration complexity can be achieved.

The generation of localized basis functions for discretizing the
Kohn-Sham problem is described in~\cite{LinLuYingE2012}.  The basic idea
is to partition the global domain into a number of subdomains (called
elements), and solve the Kohn-Sham problem locally around each element to
generate local basis functions.
The constructed basis functions are not continuous across the boundaries
of different elements.  Therefore,  we use the discontinuous Galerkin
(DG) method to construct a finite dimensional Kohn-Sham Hamiltonian
represented by these types of discontinuous basis functions.  

Furthermore, these basis functions are modified in each SCF cycle as the
electron density $\rho$ and consequently the Kohn-Sham Hamiltonian is
changed.  Hence these basis functions are called adaptive local basis
functions (ALB) because they adapt to the changes in $\rho$ and $H[\rho]$,
which include all the information of the atomic configuration and the
electronic structure.  The use of ALB combines the systematically improvable 
standard discretization methods such as the planewave
method~\cite{PayneTeterAllenEtAl1992}, the finite difference
method~\cite{ChelikowskyTroullierSaad1994}, and the finite element
method~\cite{TsuchidaTsukada1995} with the small number of degrees of
freedom of ``mesh free'' basis sets such as numerical atomic orbitals and
Gaussian type
orbitals~\cite{Ozaki:03,BlumGehrkeHankeEtAl2009,Junquera:01,ChenGuoHe2010,AndersenSaha2000,QianLiQiEtAl2008}.

We have already demonstrated the effectiveness of ALB for spatially
homogeneous systems such as disordered bulk Na and Si systems studied in
Ref.~\cite{LinLuYingE2012}. For these systems, the same number of local
basis functions are constructed in each element.  For inhomogeneous systems
such as large molecules and surfaces systems, there are usually large
vacuum regions in the computational domain. It is conceivable that one does
not need to compute too many eigenfunctions of the Kohn-Sham Hamiltonian
restricted to these regions to generate the adaptive local basis functions.
By reducing the number of basis functions generated from these regions, we
can reduce the total number of basis functions.  In this paper, we develop
an adaptive refinement strategy which allows different numbers of localized
basis functions to be generated on different elements.  The decision of how
many basis functions to generate on each element will be guided by an
\textit{a posteriori} error estimator. We demonstrate that the resulting
non-uniform generation of ALB functions is highly efficient for KSDFT
calculations on inhomogeneous systems.

A posteriori error estimates for solutions to elliptic partial differential
equations (PDEs) using DG methods have been established in
~\cite{HoustonSchotzauWihler2007,KarakashianPascal2003,SchotzauZhu2009}.
The authors considered error estimation for polynomial basis functions, in
the context of both $h$-refinement and $hp$-refinement.  The most relevant
work to our study appeared recently for the eigenvalue problem of the
Laplacian operator~\cite{GianiHall2012}. Recent work in applying DG
$hp$-refinement to solve eigenvalue problems arising from the analysis of
photonic crystals can be found in~\cite{Giani2012}.  The a posteriori error
estimator proposed in~\cite{GianiHall2012} is residual-based.  
Alternative approaches based on flux reconstruction have also
been proposed~\cite{BraessPillweinSchoberl2009,ErnNicaiseVohralik2007,LuceWohlmuth2004,Repin2008}.
The key tool
used in the analysis in~\cite{GianiHall2012} is the decomposition of the
discontinuous solution to an eigenvalue problem into an $H^{1}$
component in the global domain (the conforming part) and a remaining
component (the non-conforming part).  The construction of the a
posteriori error estimator depends explicitly on the analytic properties
of the broken polynomial space used in $hp$-refinement.  This type of
technique cannot be directly applied to non-polynomial basis functions
such as ALBs for the Kohn-Sham problem. 

We also acknowledge that the use of a posteriori error estimates for
solving eigenvalue problems discretized with a continuous basis set has
also been largely investigated, see \eg
Refs.~\cite{Verfurth1996,BeckerRannacher2001,Larson2000,DuranPadraRodriguez2003}.
Recently, adaptive refinement in KSDFT calculations has also been proposed
in the context of the finite element
method~\cite{DaiXuZhou2008,ChenDaiGongEtAl2013} and the
finite volume method~\cite{DaiGongYangEtAl2011}.

The contribution of this paper is twofold: 1) We illustrate the
theory of residual-based a posteriori error estimates for solving linear
eigenvalue problems using non-polynomial basis functions, and 2) We present
numerical results of using the residual-based a posteriori error estimates
to solve the nonlinear Kohn-Sham equations efficiently. From a theoretical
point of view, we use the same strategy as that employed
in~\cite{GianiHall2012} to develop residual-based a posteriori error
estimates for solving KSDFT using ALBs. Besides the Laplacian term, our
estimator takes into account the presence of a non-constant potential term
as required in KSDFT. To address the lack of analytical expressions for the
basis functions, we propose a set of assumptions on the function space
spanned by a basis set which would render the resulting a posteriori error
estimator an upper bound for the errors of both eigenvalues and
eigenvectors, up to terms which are of higher order in the context of
standard $hp$-refinement.  The main limitation of the current approach is
that it is difficult to directly verify the postulated assumptions for a
specific non-polynomial basis set such as the ALBs, since the a priori
error analysis of ALBs is not yet available. This also makes it difficult
to verify that the neglected terms are indeed of higher order than the
estimator, and to prove that the estimator also gives a lower bound for
the errors of eigenvalues and eigenvectors.

From a numerical point of view, the results are encouraging.  As a first
attempt to apply our analysis to practical calculations, we use the same
form of a posteriori error estimator as that given in~\cite{GianiHall2012},
but reinterpret $p$ as the number of ALBs rather than the polynomial
degree.  The numerical results from 3D KSDFT calculations indicate that the
a posteriori error estimator captures the spatial inhomogeneity of the
system, and therefore gives a promising approach to improving the
accuracy of solutions to KSDFT without increasing the computational cost.

The rest of the manuscript is organized as follows.  In
Section~\ref{sec:DG}, we introduce the DG framework for solving KSDFT, and
the construction of the ALBs.  Section~\ref{sec:apost} is devoted to the
derivation of the residual-based a posteriori error estimator for the ALBs,
as well as an explanation of the non-uniform refinement strategy.  The
effectiveness of the non-uniform refinement strategy is verified in
Section~\ref{sec:numerical3D} by applying the refinement strategy to the
solution of the Kohn-Sham problem for a quasi-2D aluminum surface and a 3D
graphene oxide in water system. The conclusion and discussion of future
work on refining the a posteriori error estimator are given in
Section~\ref{sec:conclusion}.  The details of the proofs used in
Section~\ref{sec:apost} are provided in the Appendix.

\section{Preliminaries}\label{sec:DG}

\subsection{Discontinuous Galerkin framework for KSDFT}

In a standard Galerkin method, we seek the solution to the 
Kohn-Sham nonlinear eigenvalue problem by working with its
weak form 
\begin{equation}
\langle v, H[\rho]u_i \rangle_\Omega = 
\varepsilon_{i} \langle v, u_i \rangle_\Omega,
\label{eqn:weakbase}
\end{equation}
where $\langle \cdot, \cdot \rangle_\Omega$ is an appropriately
chosen inner product defined on the global domain $\Omega$, and $v$ is a
test function.  To comply with
standard notation in the DG analysis, we use $u_{i}(\vr)\equiv
\psi_{i}(\vr)$ to represent the $i$th Kohn-Sham orbital corresponding to
the eigenvalue $\varepsilon_{i}$.

For example, we may choose $\langle \cdot, \cdot \rangle_\Omega$
to be the standard $L_2$ inner product 
\begin{equation}
	\average{u,v}_{\Omega}=\int_{\Omega} u^{\ast}(\vr) v(\vr) \ud \vr.
	\label{eq:uvprod}
\end{equation}
with the induced norm $\| \cdot \|$ defined by
\[
\norm{u}_{\Omega} = \average{u,u}_{\Omega}^{\frac12}.
\]

Both the approximate eigenfunctions $u_i$ and the test function $v$
must be chosen from an appropriate function space so that 
the weak form \eqref{eqn:weakbase} is well defined. For example, if
we let $L^2(\Omega)$ be the space of square integrable functions 
on $\Omega$, and $H^{1}(\Omega)$ be the space of functions in 
$L^{2}(\Omega)$ with gradient in $[L^{2}(\Omega)]^{d}$ where $d$ 
is the spatial dimension of the system, $u_i$ and $v$ can be
chosen from $H^{1}_{\pi}(\Omega)$, the subspace of 
$H^{1}(\Omega)$ functions with periodic boundary conditions.  

Although KSDFT is formulated as a nonlinear eigenvalue problem, the a
posteriori error estimator developed in this paper is for a linear
eigenvalue problem, with the linear Hamiltonian operator obtained from
each step of the SCF iteration.
For a fixed $\rho$ we define an effective potential
\begin{equation}
		V_{\eff}[\rho]=\Vhxc[\rho] + \Vion.
	\label{}
\end{equation}
$V_{\eff}[\rho]$ defines an effective Hamiltonian operator
$H_{\eff}[\rho]=-\frac12 \Delta + V_{\eff}[\rho]$. For brevity, we will
omit the dependence on $\rho$.
The effective Kohn-Sham potential $V_{\eff}$ is, in general, a symmetric
operator, and thus satisfies
\begin{equation}
	\average{u,V_{\eff} v}_{\Omega} = \average{V_{\eff}u,
	v}_{\Omega}, \quad \forall u,v\in
	H^{1}_{\pi}(\Omega).
	\label{}
\end{equation}
When $\Vion$ is constructed using the pseudopotential
method~\cite{TroullierMartins1991}, $V_{\eff}$ is bounded from above and
from below, thereby ensuring that \eqref{eqn:weakbase} is indeed well
defined when $u_i, v \in H^{1}_{\pi}(\Omega)$.

Among the various Galerkin methods, the discontinuous Galerkin (DG)
methods relax the continuity constraint on basis
functions, and provide flexibility in choosing the basis
set.  For instance, the adaptive local basis functions are given by
solutions to Kohn-Sham problems on local subdomains, and are naturally
discontinuous in the global domain.
The DG methods have been developed for
efficiently solving various types of PDEs, and there is an abundant
literature about them; see \eg Refs.~\cite{BabuskaZlamal:73, Wheeler:78, Arnold1982,
CockburnKarniadakisShu:00, CockburnShu:01,
ArnoldBrezziCockburnEtAl2002, StammWihler2010}.  In the ALB
approach~\cite{LinLuYingE2012},  the interior penalty (IP) method
\cite{BabuskaZlamal:73,Arnold1982} is used to discretize the
Kohn-Sham Hamiltonian operator.

In this paper, we assume that the global domain $\Omega$ is a
$d$-dimensional rectangular domain, and that it is partitioned into a collection
of uniform rectangular subdomains, denoted by
\begin{equation}
  \mc{T} = \{K_1, K_2, \cdots, K_M \}.
\end{equation}
Each subdomain $K_i\in \mc{T}$ is called an element of $\Omega$. 
Associated with each $K_i$ is an inner product $\average{\cdot,\cdot}_{K_i}$ defined by simply replacing $\Omega$ in \eqref{eq:uvprod} with $K_i$.
The induced norm of a function $u$ defined on $K_i$ is denoted
by $\|u\|_{K_i}\equiv \average{u,u}_{K_i}^{1/2}$.

We refer to $F$ as a face of $\mc{T}$ if $F\subset \partial K$ is a face
of the $d$-dimensional rectangular subdomain $K\in\mc{T}$. 
We refer to $F$ as an interior face of $\mc{T}$ if $F=\partial
K^{+}\cap \partial K^{-}$ for some neighboring elements
$K^{+},K^{-}\in\mc{T}$.  We note that when using periodic boundary conditions, all
faces are interior faces.  Other types of boundary conditions, such as
Dirichlet boundary condition, can be used as well, and the discussion
below can be naturally generalized to cases with boundary faces. 

In the DG framework, $u_i$ is constructed as a linear combination of
$M$ local basis functions that form a subspace from 
which $v$ is drawn. Because each basis function has support contained in one of the elements, and is not necessarily continuous
over the entire domain $\Omega$, an appropriate inner product
should be chosen to make the weak form~\eqref{eqn:weakbase} well
defined.

To define such an inner product, let us first define the discontinuous
function spaces  
\begin{equation}
  \mc{V}(\mc{T})=\{v\in L^{2}(\Omega): v\vert_{K}\in H^{1}(K), \ K\in \mc{T}\}.
\label{eq:Vspace}
\end{equation}
The inner product associated with $\mc{V}(\mc{T})$ is
\begin{equation}
\average{u, v}_{\mc{T}} = \sum_{K \in \mc{T} }
	\average{u, v}_{K} \equiv \sum_{ K \in \mc{T} }
	\int_{K} u^{\ast}(\vr) v(\vr) \ud \vr,  u,v\in \mc{V}(\mc{T}).
	\label{eq:dotT}
\end{equation}

We denote by $\mc{S}$ the collection of all the faces $\{F\}$. 
An inner product associated with functions defined on $\mc{S}$ is
\begin{equation}
\average{u, v}_{\mc{S}} = \sum_{ F \in \mc{S} }
\average{u, v}_{F} \equiv \sum_{ F \in \mc{S} }
\int_F u^{\ast}(\vr) v(\vr) \ud s(\vr). 
\label{eq:SdotT}
\end{equation}

The gradient of a function defined on $K$ has $d$ components. 
A natural inner product for this type of vector function space is
\begin{equation}
\average{\vq,\vw}_{K} = \int_{K} \vq^{\ast}(\vr) \cdot \vw(\vr) \ud \vr,
\label{eq:vecdot}
\end{equation}
for $\vq, \vw \in [L^2(K)]^d$.

If we define the vector function space $\mc{W}(\mc{T})\equiv
[L^{2}(\Omega)]^{d}$, then
\begin{equation}
\average{\vq, \vw}_{\mc{T}} = \sum_{ K \in \mc{T} }
	\average{\vq, \vw}_{K} \equiv  \sum_{ K \in \mc{T} }
	\int_{K} \vq^{\ast}(\vr) \cdot \vw(\vr) \ud \vr, \vq,\vw\in
	\mc{W}(\mc{T})
	\label{eq:WdotT}
\end{equation}
is a natural inner product associated with $\mc{W}(\mc{T})$.

Similarly, we can define an inner product for vector functions defined
only on the collection of faces $\mc{S}$, i.e.
\begin{align}
  & \average{\vq, \vw}_{\mc{S}} = \sum_{ F \in \mc{S} }
	\average{\vq, \vw}_{F}\equiv \sum_{ F \in \mc{S} }
	\int_F \vq^{\ast}(\vr) \cdot \vw(\vr) \ud s(\vr). 
	\label{eq:WdotS}
\end{align}

The inner products defined by \eqref{eq:dotT}, \eqref{eq:SdotT}, 
\eqref{eq:WdotT} and \eqref{eq:WdotS} induce the following norms
\begin{equation}
  \norm{u}_{\mc{T}}^2 = \average{u,u}_{\mc{T}},\quad
  \norm{u}_{\mc{S}}^2 =
  \average{u,u}_{\mc{S}},\quad
  \norm{\vq}_{\mc{T}}^2 = \average{\vq,\vq}_{\mc{T}},\quad
  \norm{\vq}_{\mc{S}}^2 =
  \average{\vq,\vq}_{\mc{S}}.
  \label{eq:normTS}
\end{equation}

If $u_i$ and $v$ are chosen from $H_\pi^1(\Omega)$ in a standard
Galerkin method, the weak form \eqref{eqn:weakbase} can be written as
\begin{equation}
A(u_{i},v) + \average{V_{\eff}u_{i},v}_\Omega = \varepsilon_{i}\average{u_{i},v}_{\Omega},
\label{eqn:weakeig}
\end{equation}
where the bilinear form $A(u,v)$ is defined by
\begin{equation}
  A(u,v)\equiv \frac{1}{2}\average{\nabla u,\nabla v}_{\Omega},
  \label{eqn:bilinear}
\end{equation}
and the eigenfunctions are orthonormal; $\average{u_i,u_j}=\delta_{i,j}$.

In a DG approach, when $u_i$ and $v$ are chosen from $\mc{V}(\mc{T})$, 
the bilinear form $A(u_i,v)$ includes 
$\frac{1}{2}\average{\nabla u_i,\nabla v}_{\mc{T}}$
and some additional terms that account for the discontinuity of the basis
functions. To define these terms, let $K^{+}$ and $K^{-}$ be two adjacent elements in $\mc{T}$, and
$F=\partial K^{+}\cap \partial K^{-}$ be the face shared by 
$K^{+}$ and $K^{-}$. The normal vectors on $F$ are defined to be 
$\vn^{+}$ and $\vn^{-}$, and point towards the exteriors of  $K^{+}$ and
$K^{-}$, respectively.
Let $v^{+}=v\vert_{K^{+}}$, $v^{-}=v\vert_{K^{-}}$, 
$\vq^{+}=\vq\vert_{K^{+}}$ and $\vq^{-}=\vq\vert_{K^{-}}$. Then the
average operator $\mean{\cdot}$ on $F$ is defined as
\begin{align}
	&\mean{v} = \tfrac{1}{2} (v^{+} + v^{-}),\quad \mbox{on~} F,
	&\mean{\vq} = \tfrac{1}{2} (\vq^{+} + \vq^{-}),\quad \mbox{on~} F.
	\label{eqn:averageOp}
\end{align}
We define the jump operator $\jump{\cdot}$ on $F$ by 
\begin{align}
	&\jump{v} = v^{+} \vn^{+} + v^{-} \vn^{-},\quad \mbox{on~} F,
	&\jump{\vq} = \vq^{+} \cdot \vn^{+} + \vq^{-} \cdot \vn^{-},\quad
	\mbox{on~} F.
	\label{eqn:jumpOp}
\end{align}

To derive a finite dimensional representation of the weak form 
\eqref{eqn:weakeig}, let us assume that we have chosen for each
element $K$ a set of orthonormal basis functions
$\{\varphi_{K,j}\}_{j=1}^{J_K}$, where $\varphi_{K,j}\in H^{1}(K)$ and
$J_K$ is the number of basis functions in $K$. The collection of 
basis functions for each element is denoted by $\mc{J} =
\{J_1,\cdots,J_{M}\}$, where $M$ is the total number of elements.
We extend each $\varphi_{K,j}$ to the whole computational domain
$\Omega$ by setting it to $0$ on $\Omega\backslash K$.  
Then the function space $\mc{V}_{\mc{J}}(\mc{T})$, which is a subspace of
$\mc{V}(\mc{T})$ used to approximate each $u_{i}$, is defined as
\begin{equation}
  \mc{V}_{\mc{J}}(\mc{T}) = 
	\spanop\{ \varphi_{K,j},\, K \in \mc{T},\, j = 1, \ldots, J_K \}.
\end{equation}
For reasons that will be made clear in subsequent analysis, we assume that any
function that is piecewise constant over element boundaries is in 
$\mc{V}_{\mc{J}}(\mc{T})$, \ie 
\begin{equation}
  \mathbf{1}_K \in \spanop	\{\varphi_{K,j}\}_{j=1}^{J_K}, \quad \forall K\in
	\mc{T},
	\label{eqn:constantmode}
\end{equation}
where $\mathbf{1}_K$ denotes the characteristic function on $K$.

A particular example of $\mc{V}_{\mc{J}}(\mc{T})$ is the \textit{broken
polynomial space}, which takes $\varphi_{K,j}$ to be a polynomial on
$K$.  A broken polynomial space is said to be of order $p$  if
$\{\varphi_{K,j}\}_{j=1}^{J_K}$, restricted to $K$, consists of
polynomials of degree up to $p$.

The derivative space associated with $\mc{V}_{\mc{J}}(\mc{T})$ is defined as
\begin{equation}
	\mc{W}_{\mc{J}}(\mc{T}) = \spanop\{ \nabla \varphi_{K,j},\, K \in
        \mc{T},\, j = 1, \ldots, J_K \}.
	\label{}
\end{equation}
It is clear that $\mc{W}_{\mc{J}}(\mc{T})\subset \mc{W}(\mc{T})$. It is
worth noting that in the broken polynomial case,
each component in the derivative space $\partial_{i} \varphi_{K,j},
i=1,\ldots,d$  is a polynomial of lower order, and therefore is in the
function space $\mc{V}_{\mc{J}}(\mc{T})$.  However, this property does
not hold for general basis sets, so that the functions in
  $\mc{V}_{\mc{J}}(\mc{T})$ and the components of the functions in
$\mc{W}_{\mc{J}}(\mc{T})$ may belong to different function spaces. 

Given the function space $\mc{V}_{\mc{J}}(\mc{T})$ we formulate the
eigenvalue problem~\eqref{eqn:weakeig} in the discontinuous Galerkin
framework using the interior penalty method described in
Ref.~\cite{Arnold1982}. Namely, find the first N approximate eigenpairs
$(\varepsilon_{i,\mc{J}},u_{i,\mc{J}})\in \mathbb{R}\times
\mc{V}_{\mc{J}}(\mc{T})$ such that
\begin{equation}
	A_{\mc{J}}(u_{i,\mc{J}},v_{\mc{J}}) +
	\average{V_{\eff}u_{i,\mc{J}},v_{\mc{J}}}_{\mc{T}} =
	\varepsilon_{i,\mc{J}}\average{u_{i,\mc{J}},v_{\mc{J}}}, \quad
	\forall v_{\mc{J}} \in \mc{V}_{\mc{J}}(\mc{T}).
	\label{eqn:weakDGeig}
\end{equation}
and $\average{u_{i,\mc{J}},u_{j,\mc{J}}}_{\mc{T}}=\delta_{ij}$.
Here, for $u_{\mc{J}},v_{\mc{J}} \in \mc{V}_{\mc{J}}(\mc{T})$,
the bilinear form $A_{\mc{J}}(u_{\mc{J}},v_{\mc{J}})$ is given by 
\begin{equation}
	\begin{split}
	A_{\mc{J}}(u_{\mc{J}},v_{\mc{J}}) =& \frac{1}{2}\average{\nabla u_{\mc{J}},\nabla
	v_{\mc{J}}}_{\mc{T}} 
	- \frac{1}{2} \average{\mean{\nabla u_{\mc{J}}},\jump{v_{\mc{J}}}}_{\mc{S}}
	- \frac{1}{2} \average{\mean{\nabla
	v_{\mc{J}}},\jump{u_{\mc{J}}}}_{\mc{S}}\\
	&+ \sum_{F\in \mc{S}} \alpha(J_F)
	\average{\jump{u_{\mc{J}}},\jump{v_{\mc{J}}}}_{F},
	\end{split}
	\label{eqn:bilinearDG}
\end{equation}
where $\alpha(J_F)$ denotes the interior penalty parameter on
the face $F$ which remains to be determined.  This parameter penalizes
discontinuities of functions across element faces.  The values
$\{\alpha(J_F)\}$ must be large enough to guarantee that the bilinear
form $A_{\mc{J}}$ is coercive on $\mc{V}_{\mc{J}}$. We provide one
sufficient condition on $\alpha(J_{F})$ for general non-polynomial basis
functions in Eq.~\eqref{eqn:alphacondition} and in
Corollary~\ref{cor:ADGcoercive}.

An alternative way to define $\alpha(J_{F})$, which is used in this
paper, is to first define a penalty parameter $\alpha(J_{K})$ for each
element $K\in \mc{T}$.  Then for each face $F=\partial
K^{+}\cap \partial K^{-}$, we define
\begin{equation}
  \alpha(J_{F}) \equiv \max\{\alpha(J_{K^{+}}),\alpha(J_{K^{-}})\}.
  \label{eqn:alphaJF}
\end{equation}
Again $\{\alpha(J_{K})\}$ should be chosen such that $A_{\mc{J}}$ is
coercive on $\mc{V}_{\mc{J}}$.

~

\begin{remark}
In the context of standard $hp$-refinement, let $h_K$ be the diameter of
the element $K$, and let $p_K$ be highest degree of the polynomials used in
$K$.  Then the choice 
\begin{equation}
  \alpha(p_{K}) = \frac{\gamma p_K^2}{h_K}
	\label{eqn:alphaHP}
\end{equation}
leads to a coercive bilinear form $A_{\mc{J}}$ 
for a sufficiently large positive
value of $\gamma$ which is independent of $p_K$ and
$h_K$~\cite{Arnold1982,ArnoldBrezziCockburnEtAl2002}. 
The choice of $\gamma$ is in general system-dependent.
Condition~\eqref{eqn:alphacondition} generalizes
Eq.~\eqref{eqn:alphaHP}.
\end{remark}

~

To solve Eq.~\eqref{eqn:weakDGeig} numerically, we write
\begin{equation}
	u_{i,\mc{J}} = \sum_{K \in \mc{T}} \sum_{j=1}^{J_K} c_{K, j; i}
  \varphi_{K, j}.
\end{equation}
By choosing $v_{\mc{J}}=\varphi_{K',j'}$, we turn Eq.~\eqref{eqn:weakDGeig}
into the following matrix eigenvalue problem
\begin{equation}
	\sum_{K,j} H^{\DG}_{K,j; K', j'} c_{K,j;i}
  = \varepsilon_{i,\mc{J}} c_{K',j';i},
	\label{eqn:DGeig}
\end{equation}
where the matrix elements for the DG Hamiltonian operator are given by
\begin{equation}
		H^{\DG}_{K,j; K', j'} = A_{\mc{J}}(\varphi_{K,j},\varphi_{K',j'})
		+ \average{V_{\eff}\varphi_{K,j},\varphi_{K',j'}}_{\mc{T}}.
	\label{eqn:DGham}
\end{equation}
Since we choose the basis functions to be orthonormal, 
\eqref{eqn:DGeig} is a standard eigenvalue problem.
Because the basis functions have local support,
the DG Hamiltonian matrix~\eqref{eqn:DGham} is a sparse matrix,
and its eigenvalues and eigenvectors can be computed 
efficiently on high performance computers.

\subsection{Adaptive local basis functions}

The local basis functions $\{\varphi_{K,j}\}_{j=1}^{J_K}$ which we use to 
discretize the Kohn-Sham problem are constructed as
follows.  For each $K\in \mc{T}$, we introduce an associated {\em extended
element} $Q_{K} \supset K$, with $Q_{K} \backslash K$ a buffer
region surrounding $K$.  We define $V_{\eff}^{Q_{K}}=V_{\eff}\vert_{Q_{K}}$
to be the restriction of the effective potential at the current SCF
step to $Q_{K}$, and solve the local eigenvalue problem
\begin{equation} 
	\begin{split}
		&\left(-\frac12 \Delta + V_{\eff}^{Q_{K}}\right) \wt{\varphi}_{K,j} = \lambda_{K,j}
		\wt{\varphi}_{K,j},\\
		&\int_{Q_{K}} \wt{\varphi}^{*}_{K,j}(\vr)\wt{\varphi}_{K,j'}(\vr) =
		\delta_{jj'}.
	\end{split}
	\label{eqn:localproblem} 
\end{equation}
The lowest $J_{K}$ eigenvalues
$\{\lambda_{K,j}\}_{j=1}^{J_{K}}$ and the corresponding eigenfunctions
$\{\wt{\varphi}_{K,j}\}_{j=1}^{J_{K}}$ are computed.
We then restrict $\{\wt{\varphi}_{K, j}\}_{j=1}^{J_{K}}$ from $Q_{K}$ to
$K$. The truncated vectors are not necessarily orthonormal. 
Therefore, we apply a singular value decomposition (SVD) to the set of 
truncated eigenvectors to obtain $\{\varphi_{K,j}\}_{j=1}^{J_K}$.
We then set each $\varphi_{K,j}$ equal to zero outside of $K$, so that it
is in general discontinuous across the boundary of $K$.

There are a number of possible ways to set the boundary conditions 
for the local problem~\eqref{eqn:localproblem}.  
In practice, we use either Dirichlet or periodic boundary conditions 
for all of the eigenfunctions 
$\{\wt{\varphi}_{K,j}\}_{j=1}^{J_{K}}$ in $Q_{K}$.  
It is not yet clear what are the optimal boundary conditions.
Our ultimate goal is not to solve ~\eqref{eqn:localproblem} 
but to use the approximate solution of~\eqref{eqn:localproblem} 
to construct localized basis functions, and any choice of boundary
conditions that guarantees $\left(-\frac12 \Delta + V_{\eff}^{Q_{K}}\right)$ 
to be a self adjoint operator on $Q_{K}$ will generate a complete basis set on
$K$. In this sense, the choice of boundary conditions is not
critical for the purposes of this paper.  
The size of each extended element should be large enough to capture
the effect of the chemical environment, but should not be so large that
it makes the local problem costly to solve.

We solve the local eigenvalue problem ~\eqref{eqn:localproblem}
using a planewave discretization scheme.  It should be noted that the use of
a finite dimensional discretization for solving
Eq.~\eqref{eqn:localproblem} would introduce additional error on top of
that incurred by using a finite number of adaptive local basis functions in each
element.  Numerical results indicate that inaccurately solved adaptive
local basis functions are less effective in solving the Kohn-Sham
equations.  In our numerical results, we always use a sufficiently fine
planewave discretization to ensure that the error caused by the planewave
discretization is negligible compared to the error introduced by the use of a
finite number of adaptive local basis functions.  We find that the
energy cutoff of such planewave discretizations is comparable to or
slightly larger than the energy cutoff of converged calculations obtained
from standard electronic structure software packages such as ABINIT.

For a typical choice of grid used in practice, the elements are chosen to
be of the same size. Numerical results indicate that it is most efficient
if each element contains on average a few atoms. The grid does not need
to be updated even if the atomic configuration is changed, as in the case of
structure optimization and molecular dynamics. Dangling bonds may form when
atoms are present on the faces of the extended elements, but we emphasize
that these dangling bonds do not need to be passivated by introducing
auxiliary atoms near the faces of the extended
elements~\cite{ZhaoMezaWang2008}.  This is because the potential is not
obtained self-consistently within the extended element $Q_{K}$, but instead
from the restriction of the screened potential in the global domain
$\Omega$ to $Q_{K}$ in each SCF iteration, which mutes the catastrophic
damage of the dangling bonds. On the other hand, one can still introduce
auxiliary atoms near the faces of the extended elements as in the quantum
mechanics/molecular mechanics (QM/MM)~\cite{WarshelLevitt1976} approach to
achieve a better approximation of the exact boundary conditions for
$\wt{\varphi}_{K,j}$ than the Dirichlet or periodic boundary condition used
here. This is beyond the scope of this manuscript, but can be explored in
future work.

\section{Residual-based a posteriori error estimator}\label{sec:apost}

In this section, we develop a posteriori error estimates for ALB sets.
Although ALB sets are constructed from solutions to the Kohn-Sham equations
restricted to local domains, so far little can be derived analytically in
order to obtain the approximation properties of the function space spanned
by the ALBs. Therefore we do not restrict our scope to ALBs, but attempt to
directly develop a posteriori error estimates for general non-polynomial
basis sets.  In the ALB method, the size of each element is usually chosen
to include one or several atoms (usually less than $10$ per element) to
balance efficiency and accuracy.  These basis functions are not scale
invariant, and the meaning of $h$-refinement is not straightforward.
Therefore we do not consider $h$-refinement here and the number of
elements $M$ is fixed. We only consider the refinement of the
number of basis functions $J_{K}$ in each element $K\in \mc{T}$.  This
is analogous to $p$-refinement.

\subsection{Theory}

In order to address the apparent difficulty associated with the
analysis of general non-polynomial basis sets, we need a set of assumptions 
on the function space spanned by the ALB functions. These assumptions
directly generalize the results in~\cite{GianiHall2012} for
polynomial basis functions. However, we acknowledge that the assumptions
are not easy to verify directly for a given specific basis set such as
the ALB set. We will defer the detailed verification of these
assumptions for the ALB set to our future work, and focus in this paper
on the type of error estimates we can obtain and use to adaptively
refine the basis set if these assumptions hold. In the following
discussion, ``$a \lesssim b$'' is taken to mean that ``$a$ is less than
or equal to $b$ up to a constant scaling''. The scaling factor
$a/b$ depends on the dimensionality $d$, the total number of elements
$M$, the size of each element $K$, and the size of the global
domain $\Omega$.  In particular, the constant is independent of the
number of basis functions $J_{K}$ for each $K\in \mc{T}$.

\begin{assumption}
	\label{asp:regularity}
	\begin{enumerate}
		\item (Approximation properties) For any $u\in H^{1}_{\pi}(\Omega)$, there exists a function
			$u_{\mc{J}} \in \mc{V}_{\mc{J}}(\mc{T})$ such that for each $K \in \mc{T}$
			\begin{align}
				\norm{u-u_{\mc{J}}}^2_{K}&\lesssim \gamma_{1}(J_{K})
				\norm{\nabla u}^2_{K},\label{eqn:gamma1}\\
				\norm{u-u_{\mc{J}}}^{2}_{\partial K} &\lesssim
				\gamma_{2}(J_{K}) \norm{\nabla u}^2_{K},\label{eqn:gamma2}\\
				\norm{\nabla(u-u_{\mc{J}})}^{2}_{K} &\lesssim
				\norm{\nabla u}^2_{K},\label{eqn:gamma3}
			\end{align}
			where the constants
			$\gamma_{1}$ and $\gamma_{2}$
			depend only on $J_{K}$. 
      For a given $\mc{J} = \{J_1,\cdots,J_{M}\}$, we define
			$\gamma_{1,\mc{J}}=\max_{K\in \mc{T}} \gamma_{1}(J_{K})$. 
      Furthermore,
			$\gamma_{1}(\cdot)$ and $\gamma_{2}(\cdot)$
      are bounded from above.  The parameter $\gamma_{2}$  satisfies
			\begin{equation}
				1 \lesssim \gamma_{2}(J_K)\alpha(J_K).
				\label{eqn:constantCondition2}
			\end{equation}
      
		\item (Interpolation properties) There exists an interpolation operator $I_{\mc{J}}:
			\mc{V}_{\mc{J}}(\mc{T})\to H^{1}_{\pi}(\Omega)$ such that for any
			$u_{\mc{J}}\in \mc{V}_{\mc{J}}(\mc{T})$
			\begin{equation}
				\sum_{K\in \mc{T}} \norm{\nabla (u_{\mc{J}} -
				I_{\mc{J}} u_{\mc{J}})}^{2}_{K} \lesssim
				\sum_{F\in \mc{S}} \gamma_{2}(J_{F}) \alpha^{2}(J_{F})
				\norm{\jump{u_{\mc{J}}}}_{F}^{2}.
				\label{eqn:IJcondition1}
			\end{equation}
      Here for $F=\partial K^{+}\cap \partial K^{-}$, we write
      \[
      \gamma_{2}(J_{F}) \equiv
      \max\{\gamma_{2}(J_{K^{+}}),\gamma_{2}(J_{K^{-}})\}, \quad
      \alpha(J_{F}) \equiv
      \max\{\alpha(J_{K^{+}}),\alpha(J_{K^{-}})\}.
      \]
		\item (Inverse trace inequalities) $\forall u_{\mc{J}}\in
			\mc{V}_{\mc{J}}(\mc{T})$,
			\begin{equation}
				\norm{u_{\mc{J}}}^2_{\partial K} \lesssim 
				\gamma_{2}(J_K) \alpha^{2}(J_{K}) \norm{u_{\mc{J}}}^2_{K},\quad
				\norm{\nabla u_{\mc{J}}}^2_{\partial K} \lesssim 
				\gamma_{2}(J_K) \alpha^{2}(J_{K}) \norm{\nabla
				u_{\mc{J}}}^2_{K}.
				\label{eqn:inversetrace}
			\end{equation}
	\end{enumerate}
\end{assumption}

The constants in the estimates~\eqref{eqn:gamma1} and~\eqref{eqn:gamma2}
are important components of the a posteriori error estimator. For each
element $K$, we define the local estimator $\eta_{i,K}$ for estimating
the error of the $i$th eigenpair
$(\varepsilon_{i,\mc{J}},u_{i,\mc{J}})$ as
\begin{equation}
	\label{eqn:localestimator} 
	\eta_{i,K}^2 = \eta_{i,R_K}^2 + \eta_{i,G_K}^2 + \eta_{i,V_K}^2 
\end{equation}
where
\begin{align}
	\label{eqn:localestimatorterms} 
	\eta_{i,R_K}^2 &= \gamma_{1}(J_K)
	\norm{\left(-\frac12 \Delta + V_{\eff} - \varepsilon_{i,\mc{J}}\right) 
	u_{i,\mc{J}}}_{K}^2 \\ 
	\eta_{i,G_K}^2 &= \frac{1}{4}
	\sum_{F \subset \partial K} \gamma_{2}(J_F) \norm{\jump{\nabla
	u_{i,\mc{J}}}}_{F}^2 \\ 
	\eta_{i,V_K}^2 &= \frac{1}{4} 
	\sum_{F \subset \partial K} \gamma_{2}(J_F) \alpha^2(J_F) 
	\norm{\jump{u_{i,\mc{J}}}}_{F}^2.
\end{align} 
Here $\eta_{i,R_K}$ measures the residual (R) of the $i$th computed
eigenfunction on the interior of $K$, $\eta_{i,G_K}$ measures the
discontinuity of the gradient (G) of the $i$th computed eigenfunction over
the faces of $K$, and $\eta_{i,V_K}$ measures the discontinuity of the value (V)
of the $i$th computed eigenfunction over the faces of $K$. The factor
$\frac14$ in $\eta_{i,G_{K}}$ and $\eta_{i,V_K}$ comes from the
$\frac12$ in front of the Laplacian operator in Eq.~\eqref{eqn:ksham},
as well as the double counting of face terms by replacing
$\sum_{F\in \mc{S}}$ with $\sum_{K \in \mc{T}}\sum_{F \subset \partial
K}$. $F \subset \partial K$ means that $F$ is a face
of the $d$-dimensional rectangular subdomain.  We can use the local
estimators to form a global estimator of the error in the $i$th computed
eigenfunction, given by

\begin{equation}
	\label{eqn:globalestimator} 
	\eta_i^2 = \sum_{K \in \mathcal{T}} \eta_{i,K}^2.  
\end{equation}

\begin{remark}
	For $hp$-refinement using the broken polynomial space, the constants
	$\gamma_{1}$ and $\gamma_{2}$ can be defined in terms of 
        $h_K$ and $p_K$~\cite{HoustonSchotzauWihler2007,GianiHall2012}, i.e.,
	\begin{equation}
		\gamma_{1}(p_K) = \frac{h_K^2}{p_K^2},\quad
		\gamma_{2}(p_K) = \frac{h_K}{p_K},\quad
		\alpha(p_K)     = \frac{\gamma p_K^2}{h_K}
		\label{eqn:constantHP}
	\end{equation}
	Furthermore, it can be shown that 
	\begin{equation}
    1 \lesssim \gamma p_K \le \gamma_{2}(p_K)\alpha(p_K),
	\end{equation}
	\ie , the inequality~\eqref{eqn:constantCondition2} holds.

\end{remark}

To quantify the error associated with the approximate solutions to the 
Kohn-Sham equations obtained from the DG approach, we need to define
the following energy norm, which is induced from the bilinear
form~\eqref{eqn:bilinearDG}, and a corresponding distance function. 

\begin{definition}[Energy norm]
  For any $u\in H^{1}(\Omega)\oplus \mc{V}_{\mc{J}}(\mc{T})$, 
  \begin{equation} 
    \label{eqn:energynorm}
    \normET{u}^2 := \sum_{K \in \mathcal{T}} \frac12 \norm{\nabla
    u_{K}}^2 +
    \sum_{F \in \mathcal{S}} \alpha(J_F) \norm{\jump{u}}_{F}^2.
  \end{equation}
\end{definition}

\begin{definition}
For $v_{\mc{J}} \in \mc{V}_{\mc{J}}(\mc{T})$ and a
finite dimensional subspace $\mc{P}\subset H^{1}_{\pi}(\Omega)$,
the distance between $v_{\mc{J}}$ and $\mc{P}$ in the energy norm is defined as
	\begin{equation}
		\dist(v_{\mc{J}},\mc{P})_{E,\mc{T}} \equiv \inf_{w\in \mc{P}}
		\normET{v_{\mc{J}}-w}.
		\label{eqn:energydist}
	\end{equation}
\end{definition}

We now first give the main result for measuring the accuracy of the
eigenfunctions using the a posteriori error estimator.

\begin{theorem}[Reliability of eigenfunctions]
	\label{thm:eigfunc}
	Denote by $M(\varepsilon_{i})$ the span of all 
	eigenfunctions corresponding to the eigenvalue $\varepsilon_{i}$ for
	the eigenvalue problem~\eqref{eqn:weakeig}.  Let
	$(\varepsilon_{i,\mc{J}},u_{i,\mc{J}})$ be a computed eigenpair
	corresponding to~\eqref{eqn:weakDGeig}, with $\varepsilon_{i,\mc{J}}$
	converging to the true eigenvalue $\varepsilon_{i}$. 
	If Assumption~\ref{asp:regularity} holds, then
	\begin{equation}
		\dist\left( u_{i,\mc{J}}, M(\varepsilon_{i})
		\right)_{E,\mc{T}} \lesssim \eta_{i} + 
    (1+\sqrt{\gamma_{1,\mc{J}}}) \inf_{\substack{u_{i}\in
                M(\varepsilon_{i}) \\ \norm{u_{i}}_{\Omega}=1}}\xi_{i},
		\label{eqn:eigfuncreliable}
	\end{equation}
        where
	\begin{equation}
		\xi_{i}^{2} \equiv \sum_{K\in \mc{T}}
		\xi_{i,K}^{2},\quad\mbox{and}\quad  
		\xi_{i,K} = \norm{\varepsilon_{i}u_{i}-\varepsilon_{i,\mc{J}}u_{i,\mc{J}}}_{K}
		+ \norm{V_{\eff}u_{i} - V_{\eff}u_{i,\mc{J}}}_{K},
		\label{eqn:xidef}
	\end{equation}
\end{theorem}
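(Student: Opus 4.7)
The plan is to follow the conforming/non-conforming splitting strategy of Giani--Hall, using Assumption~\ref{asp:regularity} to transfer between the broken space $\mc{V}_{\mc{J}}(\mc{T})$ and the conforming space $H^{1}_{\pi}(\Omega)$. First I would introduce the conforming representative $u_{i,\mc{J}}^{c} := I_{\mc{J}} u_{i,\mc{J}} \in H^{1}_{\pi}(\Omega)$. For any $u_i \in M(\varepsilon_i)$ with $\norm{u_i}_{\Omega}=1$, the triangle inequality gives
\begin{equation*}
\normET{u_{i,\mc{J}} - u_i} \le \normET{u_{i,\mc{J}} - u_{i,\mc{J}}^{c}} + \normET{u_{i,\mc{J}}^{c} - u_i},
\end{equation*}
so controlling each term and then infimizing over $u_i$ yields the desired distance bound. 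Since $u_{i,\mc{J}}^c - u_i \in H^{1}_{\pi}(\Omega)$, the jumps of $u_{i,\mc{J}} - u_{i,\mc{J}}^c$ coincide with those of $u_{i,\mc{J}}$, and the jump contribution to $\normET{u_{i,\mc{J}}^c - u_i}$ vanishes.

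For the non-conforming term, \eqref{eqn:IJcondition1} controls the volume part $\tfrac12\sum_K \norm{\nabla(u_{i,\mc{J}} - u_{i,\mc{J}}^c)}_K^2$ by $\sum_F \gamma_{2}(J_F)\alpha^2(J_F)\norm{\jump{u_{i,\mc{J}}}}_F^2 = 4\sum_K \eta_{i,V_K}^2$, and the face part $\sum_F \alpha(J_F)\norm{\jump{u_{i,\mc{J}}}}_F^2$ is also dominated by $\sum_K \eta_{i,V_K}^2$ via \eqref{eqn:constantCondition2}. For the conforming term, the energy norm collapses to $\tfrac12\norm{\nabla(u_{i,\mc{J}}^c - u_i)}_\Omega^2$. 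I would choose $u_i$ to be the $L^2$-projection of $u_{i,\mc{J}}^c$ onto $M(\varepsilon_i)$, so that the difference lies in $M(\varepsilon_i)^{\perp}$ and can be recovered from the spectral gap of $H_{\eff}$ around $\varepsilon_i$ (which is well defined by the hypothesis that $\varepsilon_{i,\mc{J}} \to \varepsilon_i$).

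The heart of the argument is the residual identity: for $w \in H^{1}_{\pi}(\Omega)$,
\begin{equation*}
A(u_{i,\mc{J}}^{c} - u_i, w) + \average{V_{\eff}(u_{i,\mc{J}}^{c} - u_i), w}_{\Omega} - \varepsilon_i \average{u_{i,\mc{J}}^{c} - u_i, w}_{\Omega} = \mc{R}(w).
\end{equation*}
I would rewrite $\mc{R}(w)$ by inserting and subtracting the DG identity \eqref{eqn:weakDGeig} tested against a quasi-interpolant $w_{\mc{J}} \in \mc{V}_{\mc{J}}(\mc{T})$, and then integrate by parts elementwise. Because $w$ has vanishing jumps, this produces the bulk residual $\average{(-\tfrac12\Delta + V_{\eff} - \varepsilon_{i,\mc{J}})u_{i,\mc{J}}, w - w_{\mc{J}}}_{\mc{T}}$ together with face contributions $\tfrac12\sum_F \average{\jump{\nabla u_{i,\mc{J}}}, w - w_{\mc{J}}}_F$ and $-\tfrac12\sum_F \average{\jump{u_{i,\mc{J}}}, \mean{\nabla(w - w_{\mc{J}})}}_F$. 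Elementwise Cauchy--Schwarz, combined with the approximation bounds \eqref{eqn:gamma1}--\eqref{eqn:gamma2} used as $\norm{w-w_{\mc{J}}}_K \lesssim \sqrt{\gamma_{1}(J_K)}\norm{\nabla w}_K$ and $\norm{w-w_{\mc{J}}}_{\partial K} \lesssim \sqrt{\gamma_{2}(J_K)}\norm{\nabla w}_K$ (with the inverse trace \eqref{eqn:inversetrace} applied to the $\mean{\nabla(w-w_{\mc{J}})}$ factor), then generates the $\eta_{i,R_K}$, $\eta_{i,G_K}$, and $\eta_{i,V_K}$ contributions. Swapping $\varepsilon_{i,\mc{J}}u_{i,\mc{J}}$ and $V_{\eff}u_{i,\mc{J}}$ for $\varepsilon_i u_i$ and $V_{\eff} u_i$ in the bulk residual is precisely what produces the higher-order remainder; the $\sqrt{\gamma_{1,\mc{J}}}$ factor comes from pushing this $L^2$-type mismatch into $\normET{\cdot}$ via \eqref{eqn:gamma1}, yielding $(1+\sqrt{\gamma_{1,\mc{J}}})\xi_i$.

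The main obstacle is closing the inversion step. After testing the residual identity with $w = u_{i,\mc{J}}^{c} - u_i \in M(\varepsilon_i)^{\perp}$, one must invert the indefinite form $A + \average{V_{\eff}\cdot,\cdot}_\Omega - \varepsilon_i \average{\cdot,\cdot}_\Omega$ on $M(\varepsilon_i)^{\perp}$ by a spectral-gap (equivalently, inf-sup) argument for $H_{\eff} - \varepsilon_i$ that is uniform in $\mc{J}$, and accommodate the possibly degenerate case where $\dim M(\varepsilon_i) > 1$. A secondary technical point is verifying that the chosen quasi-interpolant $w_{\mc{J}}$ simultaneously satisfies the approximation estimates needed for the bulk, the face value, and the face gradient, so that all three local estimator components emerge with the correct constants.
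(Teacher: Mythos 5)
Your opening moves match the paper's: the splitting $u_{i,\mc{J}} = u_{i,\mc{J}}^{c} + u_{i,\mc{J}}^{r}$ with $u_{i,\mc{J}}^{c}=I_{\mc{J}}u_{i,\mc{J}}$, the bound on the non-conforming part by $\eta_{i,V}$ via \eqref{eqn:IJcondition1} and \eqref{eqn:constantCondition2}, and the elementwise integration by parts plus Cauchy--Schwarz with the weights $\sqrt{\gamma_1}$, $\sqrt{\gamma_2}$ that produces $\eta_{i,R}$, $\eta_{i,G}$, $\eta_{i,V}$ are all exactly what the paper does (Lemmas~\ref{lem:urbound} and~\ref{lem:ucboundaux}). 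The genuine gap is the step you flag yourself as ``the main obstacle'': you propose to pick $u_i$ as the $L^2$-projection of $u_{i,\mc{J}}^{c}$ onto $M(\varepsilon_i)$ and then invert the form $A(\cdot,\cdot)+\average{V_{\eff}\,\cdot,\cdot}_{\Omega}-\varepsilon_i\average{\cdot,\cdot}_{\Omega}$ on $M(\varepsilon_i)^{\perp}$ by a spectral-gap or inf-sup argument. That step is not supplied, and it is harder than it looks: the form is indefinite on $M(\varepsilon_i)^{\perp}$ (negative on the eigenspaces below $\varepsilon_i$), so testing the residual identity with $w=u_{i,\mc{J}}^{c}-u_i$ does not close, and a true inf-sup argument would introduce a constant depending on the spectral gap around $\varepsilon_i$, which is an ingredient the paper's ``$\lesssim$'' convention does not include.

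The point you are missing is that no inversion is needed, because the theorem tolerates the remainder $(1+\sqrt{\gamma_{1,\mc{J}}})\,\xi_i$. The paper fixes an \emph{arbitrary} normalized $u_i\in M(\varepsilon_i)$, sets $v=u_i-u_{i,\mc{J}}^{c}\in H^{1}_{\pi}(\Omega)$, and observes that for conforming $v$ the energy norm collapses, so that $\normET{v}^2 = A(v,v) = \average{\varepsilon_i u_i - V_{\eff}u_i,\,v}_{\Omega} - A(u_{i,\mc{J}}^{c},v)$ follows from the eigenvalue equation \eqref{eqn:weakeig} for $u_i$ alone; the $V_{\eff}$ and $\varepsilon$ contributions are never moved to the left as an operator to be inverted but are kept on the right and, after inserting the DG equation \eqref{eqn:weakDGeig} tested against $v_{\mc{J}}$, absorbed into the term $\average{r_i,v_{\mc{J}}}_{\mc{T}}$ with $r_i=\varepsilon_iu_i-\varepsilon_{i,\mc{J}}u_{i,\mc{J}}-V_{\eff}u_i+V_{\eff}u_{i,\mc{J}}$. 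Dividing by $\normET{v}$ gives the bound for that particular $u_i$, and the infimum over $u_i$ is taken only at the very end, which also disposes of the degenerate case $\dim M(\varepsilon_i)>1$ without any projection. A second omission: bounding $\average{r_i,v_{\mc{J}}}_{\mc{T}}$ by $(1+\sqrt{\gamma_{1,\mc{J}}})\xi_i\normET{v}$ is not a direct Cauchy--Schwarz, since $\normET{\cdot}$ controls only $\norm{\nabla v}_{\Omega}$ for conforming $v$, not $\norm{v_{\mc{J}}}_{\mc{T}}$. One must first replace $v_{\mc{J}}$ by $v_{\mc{J}}-\Pi_0 v$, which is legitimate because $\average{r_i,\Pi_0 v}_{\mc{T}}=0$ by the constant-mode assumption \eqref{eqn:constantmode} together with \eqref{eqn:weakeig} and \eqref{eqn:weakDGeig}, and then combine \eqref{eqn:gamma1} with the Poincar\'e inequality. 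Your sketch attributes the factor $(1+\sqrt{\gamma_{1,\mc{J}}})$ to \eqref{eqn:gamma1} alone, and that does not suffice.
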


\begin{proof}
	We decompose a computed
	eigenfunction $u_{i,\mc{J}}$ into a conforming part and a remainder part
	\begin{equation}
		u_{i,\mc{J}} = u_{i,\mc{J}}^{c} + u_{i,\mc{J}}^{r},
		\label{eqn:splitu}
	\end{equation}
	where $u_{i,\mc{J}}^{c}=I_{\mc{J}} u_{i,\mc{J}}\in H^{1}_{\pi}(\Omega)$ is defined
	using the interpolation operator satisfying the
	condition~\eqref{eqn:IJcondition1}. 
	From the triangle inequality
	\begin{equation}
		\normET{u_{i}-u_{i,\mc{J}}} \le \normET{u_{i,\mc{J}}^{r}} + \normET{u_{i}-u_{i,\mc{J}}^{c}},
		\label{eqn:splituerror}
	\end{equation}
	it is sufficient to prove that both terms on the right hand side of
	Eq.~\eqref{eqn:splituerror} are bounded by $\eta_{i}$.
	Applying Lemma~\ref{lem:urbound} and ~\ref{lem:ucbound} to
	Eq.~\eqref{eqn:splituerror}, we have
	\begin{equation}
		\normET{u_{i}-u_{i,\mc{J}}} \lesssim \eta_{i} + 
    (1+\sqrt{\gamma_{1,\mc{J}}}) \xi_{i}.
		\label{eqn:uerrbound}
	\end{equation}
	The theorem follows from Eq.~\eqref{eqn:uerrbound} directly, once 
	we minimize $\xi_{i}$ among all normalized eigenfunction $u_{i}\in
	M(\varepsilon_{i})$. 
\end{proof}

Theorem~\ref{thm:eigfunc} illustrates that the error of each
eigenfunction attributed to the finite dimensional approximation
$\mc{V}_{\mc{J}}$ is bounded by the residual-based error estimator
$\eta_{i}$, up to a constant factor independent of $\mc{V}_{\mc{J}}$ and
a remaining term which is of high order in the context of
$hp$-refinement.  As will be shown below, Theorem~\ref{thm:eigfunc} is
also used for error estimates of the eigenvalues.  In the context of
KSDFT, the error of eigenvalues directly indicates the error of physical
observables such as total energies.

In order to develop a posteriori error estimates for eigenvalues, we
need to address the technical difficulty that the bilinear operator $A_{\mc{J}}$ is neither
coercive nor bounded with respect to the energy norm $\normET{\cdot}$ on
the space $H^{1}_{\pi}(\Omega)\oplus \mc{V}_{\mc{J}}(\mc{T})$. We
therefore need to extend the definition of the bilinear operator
$A_{\mc{J}}$ in Eq.~\eqref{eqn:weakDGeig}. The extended bilinear
operator is coercive and bounded on the joint space
$H^{1}_{\pi}(\Omega)\oplus \mc{V}_{\mc{J}}(\mc{T})$, as will be shown in
Lemma~\ref{lem:Atildebound}.
We introduce the lifting operator which is used in
Refs.~\cite{ArnoldBrezziCockburnEtAl2002,HoustonSchotzauWihler2007,GianiHall2012}
for $hp$-refinement,
and generalize it to non-polynomial basis set as follows.

\begin{definition}[Lifting operator]\label{def:lifting}
For any $v\in
H^{1}_{\pi}(\Omega)\oplus \mc{V}_{\mc{J}}(\mc{T})$, define
$\mc{L}v\in \mc{W}_{\mc{J}}(\mc{T})$ by
\begin{equation}
	\average{\mc{L}v,\vq_{\mc{J}}}_{\mc{T}} \equiv
	\average{\jump{v},\mean{\vq_{\mc{J}}}}_{\mc{S}},\quad \forall
	\vq_{\mc{J}}\in \mc{W}_{\mc{J}}(\mc{T}).
	\label{eqn:lifting}
\end{equation}
\end{definition}

With the lifting operator, for any $u,v\in H^{1}_{\pi}(\Omega)\oplus
\mc{V}_{\mc{J}}(\mc{T})$,
we define the following extended bilinear form $\wt{A}_{\mc{J}}(u,v)$
\begin{equation}
	\wt{A}_{\mc{J}}(u,v) = \frac{1}{2}\average{\nabla u,\nabla
	v}_{\mc{T}}
	- \frac{1}{2} \average{\mc{L}u, \nabla v}_{\mc{T}}
	- \frac{1}{2} \average{\mc{L}v, \nabla u}_{\mc{T}}
	+ \sum_{F\in \mc{S}} \alpha(J_F)
	\average{\jump{u},\jump{v}}_{F}
	\label{eqn:bilinearDGextend}
\end{equation}
It is clear that $\wt{A}_{\mc{J}}$ is consistent with both $A$ and
$A_{\mc{J}}$ in the sense that 
$\wt{A}_{\mc{J}}(u,v)=A(u,v), \forall u,v\in H^{1}_{\pi}(\Omega)$,
and $\wt{A}_{\mc{J}}(u_{\mc{J}}, v_{\mc{J}}) = A_{\mc{J}}(u_{\mc{J}},
v_{\mc{J}}),\forall u_{\mc{J}}, v_{\mc{J}}\in \mc{V}_{\mc{J}}(\mc{T})$.

In the pseudopotential framework, $V_{\eff}$ is bounded from above
and from below. Since any constant shift in the potential only causes
the same constant shift in each Kohn-Sham eigenvalue $\varepsilon_{i}$
without changing the Kohn-Sham orbitals, without loss of generality we
may add a positive constant to $V_{\eff}$ if needed, so that
$V_{\eff}$ is coercive and continuous  on $H^{1}_{\pi}(\Omega)\oplus
\mc{V}_{\mc{J}}(\mc{T})$, \ie
\begin{equation}
	\average{u,u}_{\mc{T}}
	\lesssim \average{u,V_{\eff} u}_{\Omega} 
	\lesssim \average{u,u}_{\mc{T}}, \quad \forall u\in H^{1}_{\pi}(\Omega)\oplus
	\mc{V}_{\mc{J}}(\mc{T}).
	\label{eqn:Vcoerciveextended}
\end{equation}
This technique has also been
used in previous work such as~\cite{Giani2012}.  
Combining Eq.~\eqref{eqn:Vcoerciveextended} with Lemma~\ref{lem:Atildebound}, we have
\begin{equation}
	\average{u,u}_{\mc{T}}
 \lesssim \wt{A}_{\mc{J}}(u, u) + 
	\average{V_{\eff}u,u}_{\mc{T}}
	\lesssim \average{u,u}_{\mc{T}}. 
	\label{eqn:wtAVcoercive}
\end{equation}
Eq.~\eqref{eqn:wtAVcoercive} implies that all true eigenvalues
$\varepsilon_{i}$ and computed eigenvalues $\varepsilon_{i,\mc{J}}$ are
positive.

Using the extended bilinear form $\wt{A}_{\mc{J}}$, we can define the
extended residual as follows.

\begin{definition}[Extended residual]
For any $v\in H^{1}_{\pi}(\Omega)\oplus \mc{V}_{\mc{J}}(\mc{T})$, 
and the eigenpair $(\varepsilon_{i},u_{i})$, the extended residual
corresponding to the eigenvalue problem~\eqref{eqn:weakeig} is
\begin{equation}
	\mc{R}(u_{i},v) = \wt{A}_{\mc{J}}(u_i,v) + (u_i,V_{\eff} v)
	- \varepsilon_{i}(u_i,v).
	\label{eqn:weakresidual}
\end{equation}
Similarly, for any $v\in H^{1}_{\pi}(\Omega)\oplus
\mc{V}_{\mc{J}}(\mc{T})$, and the computed eigenpair
$(\varepsilon_{i,\mc{J}},u_{i},\mc{J})$, the extended residual
corresponding to the eigenvalue problem~\eqref{eqn:weakDGeig} is
\begin{equation}
	\mc{R}_{\mc{J}}(u_{i,\mc{J}},v) = \wt{A}_{\mc{J}}(u_{i,\mc{J}},v) +
	(u_{i,\mc{J}},V_{\eff} v) - \varepsilon_{i,\mc{J}}(u_{i,\mc{J}},v).
	\label{eqn:weakDGresidual}
\end{equation}
\end{definition}

Now we give the result used to measure the error of the
eigenvalues using the a posteriori error estimator.

\begin{theorem}[Reliability of eigenvalues]
	\label{thm:eigval}
	If Assumption~\ref{asp:regularity} holds, 
	let $(\varepsilon_{i,\mc{J}},u_{i,\mc{J}})$ be a computed eigenpair
	corresponding to~\eqref{eqn:weakDGeig}, with $\varepsilon_{i,\mc{J}}$
	converging to the true eigenvalue $\varepsilon_{i}$ of multiplicity
	greater than or equal to $1$. 
	Then we have
	\begin{equation}
		\abs{\varepsilon_{i}-\varepsilon_{i,\mc{J}}} \lesssim
		\eta_{i}^2 + \inf_{\substack{u_{i}\in
                M(\varepsilon_{i}) \\ \norm{u_{i}}_{\Omega}=1}}G_{i},
		\label{eqn:eigvalreliable}
	\end{equation}
where 
	\begin{equation}
    G_{i} = (1+\sqrt{\gamma_{1,\mc{J}}})^2\xi_{i}^2 + 2
    \eta_{i}(1+\sqrt{\gamma_{1,\mc{J}}})\xi_{i}
		+ 2 \abs{\mc{R}(u_{i},u_{i}-u_{i,\mc{J}})} 
		+ 2 \abs{\mc{R}_{\mc{J}}(u_{i,\mc{J}},u_{i}-u_{i,\mc{J}})}.
		\label{eqn:Gdef}
	\end{equation}
\end{theorem}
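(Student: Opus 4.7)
The plan is to derive an exact algebraic identity expressing $\varepsilon_i - \varepsilon_{i,\mc{J}}$ in terms of the two extended residuals $\mc{R}(u_i,\cdot)$ and $\mc{R}_{\mc{J}}(u_{i,\mc{J}},\cdot)$ evaluated at $e := u_i - u_{i,\mc{J}}$, plus an $\|e\|_{\mc{T}}^2$ remainder. Since Theorem~\ref{thm:eigfunc} already bounds the energy-norm error $\normET{e}$ by $\eta_i$ and $\xi_i$, closing the estimate then reduces to controlling the $L^2$ remainder and expanding the resulting square.

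To obtain the identity, I would start from $\varepsilon_{i,\mc{J}} - \varepsilon_i = \wt{A}_{\mc{J}}(u_{i,\mc{J}},u_{i,\mc{J}}) + \average{V_{\eff}u_{i,\mc{J}},u_{i,\mc{J}}}_{\mc{T}} - \wt{A}_{\mc{J}}(u_i,u_i) - \average{V_{\eff}u_i,u_i}_{\mc{T}}$, which holds since $\wt{A}_{\mc{J}}$ extends both $A$ and $A_{\mc{J}}$ and both eigenfunctions are $L^2$-normalized. Expanding around $u_i$ via $u_{i,\mc{J}} = u_i - e$, and using the symmetry of $\wt{A}_{\mc{J}}$ and $V_{\eff}$ together with $\average{u_i,e}_{\mc{T}}=\tfrac12\|e\|_{\mc{T}}^2$, yields
\begin{equation*}
\varepsilon_{i,\mc{J}} - \varepsilon_i = \wt{A}_{\mc{J}}(e,e) + \average{V_{\eff}e,e}_{\mc{T}} - \varepsilon_i\|e\|_{\mc{T}}^2 - 2\mc{R}(u_i,e),
\end{equation*}
while expanding around $u_{i,\mc{J}}$ via $u_i = u_{i,\mc{J}} + e$, using $\average{u_{i,\mc{J}},e}_{\mc{T}} = -\tfrac12\|e\|_{\mc{T}}^2$, gives
\begin{equation*}
\varepsilon_{i,\mc{J}} - \varepsilon_i = -\wt{A}_{\mc{J}}(e,e) - \average{V_{\eff}e,e}_{\mc{T}} + \varepsilon_{i,\mc{J}}\|e\|_{\mc{T}}^2 - 2\mc{R}_{\mc{J}}(u_{i,\mc{J}},e).
\end{equation*}
Averaging these two expressions cancels the gradient and potential quadratic terms and produces the clean identity
\begin{equation*}
(\varepsilon_i - \varepsilon_{i,\mc{J}})(2 - \|e\|_{\mc{T}}^2) = 2\mc{R}(u_i,e) + 2\mc{R}_{\mc{J}}(u_{i,\mc{J}},e).
\end{equation*}

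Taking absolute values, solving for $|\varepsilon_i - \varepsilon_{i,\mc{J}}|$, and using the hypothesis that $\varepsilon_{i,\mc{J}}$ converges to $\varepsilon_i$ (so $|\varepsilon_i - \varepsilon_{i,\mc{J}}|$ is uniformly bounded) leads to
\begin{equation*}
|\varepsilon_i - \varepsilon_{i,\mc{J}}| \lesssim |\mc{R}(u_i,e)| + |\mc{R}_{\mc{J}}(u_{i,\mc{J}},e)| + \|e\|_{\mc{T}}^2.
\end{equation*}
Assuming the $L^2$ control $\|e\|_{\mc{T}}^2 \lesssim \normET{e}^2$ on the joint space, I would apply Theorem~\ref{thm:eigfunc} to obtain $\normET{e}^2 \lesssim (\eta_i + (1+\sqrt{\gamma_{1,\mc{J}}})\xi_i)^2 = \eta_i^2 + 2\eta_i(1+\sqrt{\gamma_{1,\mc{J}}})\xi_i + (1+\sqrt{\gamma_{1,\mc{J}}})^2\xi_i^2$ for any normalized $u_i \in M(\varepsilon_i)$. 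Substituting and taking the infimum over all such $u_i$ (which is legitimate since the identity holds for every choice) produces exactly the $\eta_i^2 + \inf G_i$ decomposition claimed in the theorem, with the three quadratic-expansion pieces and the two residual terms matching termwise.

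The main obstacle is justifying the discrete $L^2$-to-energy-norm inequality $\|e\|_{\mc{T}}^2 \lesssim \normET{e}^2$ on the joint space $H^1_{\pi}(\Omega)\oplus\mc{V}_{\mc{J}}(\mc{T})$, which is a Friedrichs/Poincar\'e-type inequality that is standard when $e$ has zero mean but requires care under periodic boundary conditions since the constant mode is not controlled by the gradient-plus-jump energy; combining the constant-mode assumption~\eqref{eqn:constantmode}, the coercivity estimate~\eqref{eqn:wtAVcoercive}, and Lemma~\ref{lem:Atildebound} should suffice to close this gap, though the interplay of these ingredients on the extended space is delicate. A secondary technical point is verifying the symmetry of $\wt{A}_{\mc{J}}$ on $H^1_{\pi}(\Omega)\oplus\mc{V}_{\mc{J}}(\mc{T})$, which drives the averaging cancellation; this follows by inspection of each of the three terms in definition~\eqref{eqn:bilinearDGextend} (the broken-gradient form, the symmetrized lifting term, and the jump-penalty), each of which is manifestly invariant under swapping its two arguments.
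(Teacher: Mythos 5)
Your proposal is correct and follows essentially the same route as the paper: your two expansion identities are precisely Lemma~\ref{lem:wtAidentity} (specialized to $\norm{u_{i,\mc{J}}}_{\mc{T}}=\norm{u_i}_{\Omega}=1$), and the endgame --- bounding the remainder by the energy norm of $e=u_i-u_{i,\mc{J}}$, invoking Eq.~\eqref{eqn:uerrbound} from Theorem~\ref{thm:eigfunc}, expanding the square, and minimizing over $M(\varepsilon_i)$ --- is identical. The one substantive difference is the combination step. The paper does not average the two identities; it uses each one-sidedly, dropping the terms $-\varepsilon_i\norm{e}^2_{\Omega}$ and $-\varepsilon_{i,\mc{J}}\norm{e}^2_{\Omega}$ because both eigenvalues are positive by Eq.~\eqref{eqn:wtAVcoercive}, and then takes the maximum of the two resulting inequalities. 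This avoids your factor $(2-\norm{e}^2_{\mc{T}})$ and the appeal to convergence of $\varepsilon_{i,\mc{J}}$ needed to absorb $\abs{\varepsilon_i-\varepsilon_{i,\mc{J}}}\norm{e}^2_{\mc{T}}$; the price is that the quadratic form $\wt{A}_{\mc{J}}(e,e)+\average{V_{\eff}e,e}_{\mc{T}}$ survives and must be bounded by $\normET{e}^2$, the kinetic part via Lemma~\ref{lem:Atildebound}. Note that the potential part of that quadratic form is an $L^2$ quantity, so the paper in fact faces the same ``$\norm{e}^2_{\mc{T}}\lesssim\normET{e}^2$'' issue you flag as your main obstacle --- it is simply asserted through Eq.~\eqref{eqn:wtAVcoercive} rather than proved. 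Your proposed gap is therefore shared with the published argument and does not distinguish the two proofs; everything else matches termwise.
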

\begin{proof}
  Using Eq.~\eqref{eqn:wtAidentity1},
  $\norm{u_{i,\mc{J}}}_{\mc{T}}=1$, and $\varepsilon_{i}>0$, we have
  \begin{equation}
    \begin{split}
    \varepsilon_{i,\mc{J}} - \varepsilon_{i}
    =& \wt{A}_{\mc{J}}(u_{i}-u_{i,\mc{J}}, u_{i}-u_{i,\mc{J}}) 
		+ \average{V_{\eff}(u_{i}-u_{i,\mc{J}}),
		(u_{i}-u_{i,\mc{J}})}_{\mc{T}}\\
		&- \varepsilon_{i} \norm{u_{i}-u_{i,\mc{J}}}^2_{\Omega} 
		- 2 \Re \mc{R}(u_{i}, u_{i} - u_{i,\mc{J}})\\
    \le& \wt{A}_{\mc{J}}(u_{i}-u_{i,\mc{J}}, u_{i}-u_{i,\mc{J}}) 
		+ \average{V_{\eff}(u_{i}-u_{i,\mc{J}}),
		(u_{i}-u_{i,\mc{J}})}_{\mc{T}} \\
    &+ 2 \abs{\mc{R}(u_{i}, u_{i} - u_{i,\mc{J}})}.
    \end{split}
    \label{eqn:eigvalaux1}
  \end{equation}
  Similarly, Eq.~\eqref{eqn:wtAidentity2} in Lemma~\ref{lem:wtAidentity},
  $\norm{u_{i}}_{\Omega}=1$, and $\varepsilon_{i,\mc{J}}>0$ give 
  \begin{equation}
    \begin{split}
    \varepsilon_{i} - \varepsilon_{i,\mc{J}} 
    =& \wt{A}_{\mc{J}}(u_{i}-u_{i,\mc{J}}, u_{i}-u_{i,\mc{J}}) 
		+ \average{V_{\eff}(u_{i}-u_{i,\mc{J}}),
		(u_{i}-u_{i,\mc{J}})}_{\mc{T}}\\
		&- \varepsilon_{i,\mc{J}} \norm{u_{i}-u_{i,\mc{J}}}^2_{\Omega} 
    - 2 \Re \mc{R}_{\mc{J}}(u_{i,\mc{J}}, u_{i,\mc{J}} - u_{i})\\
    \le& \wt{A}_{\mc{J}}(u_{i}-u_{i,\mc{J}}, u_{i}-u_{i,\mc{J}}) 
		+ \average{V_{\eff}(u_{i}-u_{i,\mc{J}}),
		(u_{i}-u_{i,\mc{J}})}_{\mc{T}}\\
    & + 2 \abs{\mc{R}_{\mc{J}}(u_{i,\mc{J}}, u_{i,\mc{J}} - u_{i})}.
    \end{split}
    \label{eqn:eigvalaux2}
  \end{equation}
  Combining Eq.~\eqref{eqn:eigvalaux1} and \eqref{eqn:eigvalaux2}, we
  have
	\begin{equation}
		\begin{split}
		\abs{\varepsilon_{i}-\varepsilon_{i,\mc{J}}} 
		\le & \wt{A}_{\mc{J}}(u_{i}-u_{i,\mc{J}}, u_{i}-u_{i,\mc{J}}) 
		+ \average{V_{\eff}(u_{i}-u_{i,\mc{J}}),
		u_{i}-u_{i,\mc{J}}}_{\mc{T}}\\
		&+  2 \abs{\mc{R}(u_{i}, u_{i} - u_{i,\mc{J}})}  
		+  2 \abs{\mc{R}_{\mc{J}}(u_{i,\mc{J}}, u_{i,\mc{J}} -
                u_{i})}.
		\end{split}
		\label{}
	\end{equation}
	Using Eq.~\eqref{eqn:wtAVcoercive}, we obtain
	\begin{equation}
		\abs{\varepsilon_{i}-\varepsilon_{i,\mc{J}}} \lesssim
		\normET{u_{i,\mc{J}}-u_{i}}^2 
		+ 2 \abs{\mc{R}(u_{i}, u_{i} - u_{i,\mc{J}})}
		+ 2 \abs{\mc{R}_{\mc{J}}(u_{i,\mc{J}}, u_{i} - u_{i,\mc{J}})} 
		\label{}
	\end{equation}
	which, using Eq.~\eqref{eqn:uerrbound} in
        Theorem~\ref{thm:eigfunc}, becomes
	\begin{equation}
    \begin{split}
		\abs{\varepsilon_{i}-\varepsilon_{i,\mc{J}}} \lesssim&
    \left(\eta_{i} + (1+\sqrt{\gamma_{1,\mc{J}}}) \xi_{i}\right)^2  
		+ 2 \abs{\mc{R}(u_{i}, u_{i} - u_{i,\mc{J}})}
		+ 2 \abs{\mc{R}_{\mc{J}}(u_{i,\mc{J}}, u_{i} - u_{i,\mc{J}})}\\
		\equiv& \eta_{i}^2 + G_{i}.
    \end{split}
		\label{eqn:eigerrbound}
	\end{equation}
	The theorem then follows directly from Eq.~\eqref{eqn:eigerrbound},
        once we minimize $G_{i}$ among all normalized eigenfunctions $u_{i}\in
	M(\varepsilon_{i})$.  
\end{proof}

\begin{remark}
  It remains to be shown that $\eta_{i}$ dominates $\xi_{i}$ in
  Eq.~\eqref{eqn:eigfuncreliable}.  To this end, we require a priori
  estimates of the convergence rate of $u_{i,\mc{J}}$, which we do not
  have at this stage.  However, in the case of standard $hp$-refinement for
  the Laplacian eigenvalue problem, it can be shown that
  $\xi_{i}$ is of a higher order than $\dist\left(
  u_{i,\mc{J}}, M(\varepsilon_{i})
  \right)_{E,\mc{T}}$~\cite{GianiHall2012}. This implies that $\eta_{i}$
  is the leading order term in the estimate of the
  eigenfunctions~\eqref{eqn:eigfuncreliable}.

  Similarly, in the case of standard $hp$-refinement for solving the Laplacian
  eigenvalue problem, it can be shown that
  $\mc{R}(u_{i},u_{i}-u_{i,\mc{J}})$ is of a higher order than
  $\abs{\varepsilon_{i}-\varepsilon_{i,\mc{J}}}$.  One can also show
  that $\mc{R}_{\mc{J}}(u_{i,\mc{J}},u_{i}-u_{i,\mc{J}})$ is of a higher
  order than $\abs{\varepsilon_{i}-\varepsilon_{i,\mc{J}}}$.  Combining
  this with the statement above, we can then conclude that $\eta_{i}^2$
  is the leading order term in the estimate of the
  eigenvalues~\eqref{eqn:eigvalreliable} in this case.

  For general basis sets, the proof that these terms are of higher order relies on
  an \textit{a priori} error analysis of the basis set. According to our
  current understanding, this analysis might be even more difficult than
  the \textit{a posteriori} error analysis given here for general
  non-polynomial basis functions.  
\end{remark}

\subsection{Non-uniform adaptive local basis refinement strategy}

Theorem~\ref{thm:eigfunc} demonstrates that the a posteriori error
estimator is reliable in quantifying the error of each eigenfunction on the
global domain for a suitable set of local basis functions.  In practical
calculations, we use the local error estimator $\eta_{i,K}$ to guide the
adaptive refinement of the basis functions.  An accurate and efficient
solution should not be under-resolved nor over-resolved in any element;
thus a non-uniform local basis refinement strategy should yield a solution
for which the value of the local estimator is close to uniform across all
elements.

Since we are concerned with the error of all eigenpairs simultaneously, we
define the local estimator on an element $K$ by

\begin{equation}
  \eta_K^2 = \sum_{i=1}^N \eta_{i,K}^2.
\end{equation}

The refinement strategy is straightforward. Minimum and maximum local error
thresholds $\epsilon_{min}$ and $\epsilon_{max}$ are chosen based on the
desired global error, along with a basis refinement step size
$b_{\text{step}}$ and a number of refinement steps $n$.  An initial
distribution of basis functions $\mc{J}_1$, which may be uniform or, given
prior knowledge of the problem, non-uniform, is also specified. A trial
solution is computed using the initial distribution $\mc{J}_1$. The local
estimators $\eta_K^2$ are evaluated. For each element $K$, if $\eta_K^2 <
\epsilon_{min}$, $b_{\text{step}}$ local basis functions are removed from
$K$. If $\eta_K^2 > \epsilon_{max}$, $b_{\text{step}}$ local basis
functions are added to $K$. Thus, local basis functions are removed from
over-resolved elements and added to under-resolved elements. 
At each iteration $j$, Eq.~\eqref{eqn:localproblem} is solved
for the lowest $J_{K,j}$ eigenfunctions to obtain the adaptive local
basis functions.
A new solution is computed, and the process is repeated. On the
$(n-1)$th step, a final distribution of basis function $\mc{J}_n$ is
determined, and the electron density $\rho$ is computed using this
distribution. This procedure is outlined in Algorithm \ref{alg:bref}.

This strategy may not yield an efficient distribution of basis functions if
the initial distribution $\mc{J}_1$ contains too few basis functions to
reasonably resolve the system, nor is it intended to do so. In this case,
the local estimator may not be effective at the first step, which could
lead to erratic estimates of local error. It is not necessarily the case
that the total number of basis functions will increase as the basis is
refined - as we will demonstrate in Section \ref{sec:numerical3D}, the
total number of basis functions may very well decrease through the
refinement process - so all available resources may be used to make the
initial calculation. Rather, the refinement process redistributes basis
functions to achieve higher accuracy at a minimum cost by adding basis
functions to under-resolved elements and removing basis functions from
over-resolved elements.

\begin{algorithm}
  \caption{Non-uniform basis refinement procedure.}
  \label{alg:bref}
    \begin{algorithmic}
    \STATE Input: $\epsilon_{\min}$, $\epsilon_{\max}$, initial
		distribution of basis functions $\mc{J}_1
		=\{J_{K,1}\}_{K \in \mc{T}}$, $n$,
    $b_{\text{step}}$
		\STATE Output: Refined distribution of basis functions
		$\mc{J}_{n}$, and physical quantities such as electron density $\rho$
    \FOR{$1 \le j \le n-1$}
      \STATE Compute a set of eigenpairs
      $\{(\varepsilon_{i,\mc{J}_j},u_{i,\mc{J}_j})\}_{i=1}^N$ based on
			the ALB approach
      \FOR{$K \in \mc{T}$}
        \STATE Compute $\eta_K^2$
        \IF{$\eta_K^2 < \epsilon_{\min}$}
	  \STATE Update $J_{K,j+1}\gets J_{K,j} - b_{\text{step}}$
        \ELSIF{$\eta_K^2 > \epsilon_{\max}$}
          \STATE Update $J_{K,j+1}\gets J_{K,j} + b_{\text{step}}$
        \ELSE
          \STATE Update $J_{K,j+1}\gets J_{K,j}$
        \ENDIF
      \ENDFOR
    \ENDFOR
    \STATE Compute and output physical quantities (such as $\rho$) using $\mc{J}_n$
  \end{algorithmic}
\end{algorithm}

Let us contrast the non-uniform refinement procedure with a uniform
refinement procedure, in which some number $b_{\text{step}}$ of basis
functions are added to every element for each of $n$ steps. If we carry out
uniform and non-uniform refinement with the same choices of
$b_{\text{step}}$ and $n$, and the same initial distribution of basis
functions $\mc{J}_1$, we can compare the accuracy and total number of basis
functions used at each step of the two refinement processes. The maximum
possible number of basis functions assigned to any element on a given step of the
non-uniform refinement scheme will always be equal to the number of basis
functions in all elements on the same step of the uniform refinement
scheme, but the minimum number of basis functions assigned to any element may be as
little as zero. Therefore, the error on a given step of the uniform refinement
scheme will always be less than or equal to that on the corresponding step
of the non-uniform refinement scheme, up to small numerical errors incurred
during the course of the solution process. We may then ask how much less is
the error obtained from uniform refinement than from non-uniform refinement,
and how many more basis functions are used to make it so. This is the
criterion by which the non-uniform refinement scheme will be judged against a
uniform refinement scheme for the calculations presented in Section
\ref{sec:numerical3D}.

\section{Numerical results for Kohn-Sham density functional theory
calculations}\label{sec:numerical3D}

In this section we present numerical results which demonstrate that the global
estimator is effective in predicting the error of the total energy, and that
the local estimator may be used to guide efficient non-uniform local basis
refinement. Although the theoretical part of the paper only
aims at developing an a posteriori error estimator for linear eigenvalue problems, we
apply the estimator to solve the nonlinear Kohn-Sham equations.  All
numerical results are performed with converged self-consistent field
iterations. They properly take into account the nonlinearity of the Kohn-Sham
equations.
We test the refinement strategy for Kohn-Sham density
functional theory calculations on two example systems. The first is an aluminum
surface with a large vacuum region, and the second is a single
layer of graphene oxide in water. These systems are highly inhomogeneous
and therefore serve as good benchmark examples of non-uniform adaptive
local basis refinement.   

The evaluation of the a posteriori error estimator and the refinement
strategy are implemented in the Discontinuous Galerkin Density
Functional Theory (DGDFT) software.  The energies and atomic forces are
directly compared with the results obtained from ABINIT~\cite{abinit1}
using the same atomic configuration.  We note that although the analysis in
Section~\ref{sec:apost} is given for the linear eigenvalue problem, in all
the numerical examples we solve the nonlinear eigenvalue problem as is
required for KSDFT calculations. The results presented for both DGDFT
and for ABINIT calculations have achieved convergence in the self-consistent
field (SCF) iteration.
In both the DGDFT and the ABINIT
calculations, we use the local density approximation
(LDA)~\cite{CeperleyAlder1980,PerdewZunger1981} for the
exchange-correlation functional, and the Hartwigsen-Goedecker-Hutter (HGH)
pseudopotential~\cite{HartwigsenGoedeckerHutter1998} with the local and
non-local pseudopotential fully implemented in the real
space~\cite{PaskSterne2005}. All quantities are reported in atomic units
(au).  All calculations are carried out on the Edison system maintained
at the National Energy Research Scientific Computing Center (NERSC). Each
compute node on Edison has $24$ Intel ``Ivy Bridge'' cores ($2.4$ GHz)
with $64$ gigabytes (GB) of memory. 

As a first attempt we use the choice of parameters in
Eq.~\eqref{eqn:constantHP} to define the a posteriori error estimator.
We define $h_{K}$ to be the diameter of the element $K$, 
and we reinterpret $p_{K}$ to be the number of ALBs rather than the
polynomial degree in each element $K$.  Our results indicate
that for the examples studied, the global estimator $\eta^2 = \sum_{K
\in \mc{T}} \eta_K^2$ is numerically effective in predicting the error
of the total energy within a relatively small constant factor, and that
the local estimator $\eta_K^2$ is numerically effective in predicting
the error in each element $K$ of the computed electron density $\rho$.
As a result, the non-uniform basis refinement strategy yields efficient
distributions of local basis functions across the elements for these
examples. Replacing a uniform refinement scheme with a non-uniform
refinement scheme incurs very little loss in accuracy, but allows the
use of a much smaller basis and hence gives significant computational
savings.

\subsection{Quasi-2D aluminum surface}

The first example is a quasi-2D aluminum surface with a large vacuum
region. The periodic domain contains 16 aluminum atoms arranged in a
row, and a large vacuum region. 
The size of the supercell is approximately $7.65$ au, $30.61$ au and $45.92$ au along
the $x,y,$ and $z$ directions, respectively. The computational domain is
partitioned into a $1 \times 6 \times 9$ grid of elements. The size of the
extended element is always $3$ times the size of the element along each
direction, unless there is only one element and in this case the size of
the extended element is the same as the size of the element.
One slice of the electron density $\rho$ in the $y-z$ plane is shown in
Fig.~\ref{fig:Al6x9yzslice}, and another
in the $x-y$ plane in Fig.~\ref{fig:Al6x9xyslice}. The
black dashed lines indicate the partition of the elements.
In DGDFT, a uniform grid is used to represent quantities such as the electron
density and potential in the global domain, with $70$, $276$, and
$414$ grid points along
the $x$, $y$, and $z$ directions, respectively. A Legendre-Gauss-Lobatto (LGL) grid
is used inside each element to construct the DG Hamiltonian matrix, and
the number of LGL grid points inside each element is $140$, $92$, and $92$ along
the $x$, $y$, and $z$ directions, respectively.

\begin{figure}[ht]
  \centering
  \subfloat[Fixed x = 3.8267 \label{fig:Al6x9yzslice}]{%
    \raisebox{-0.5\height}{\includegraphics[width=0.47\textwidth]{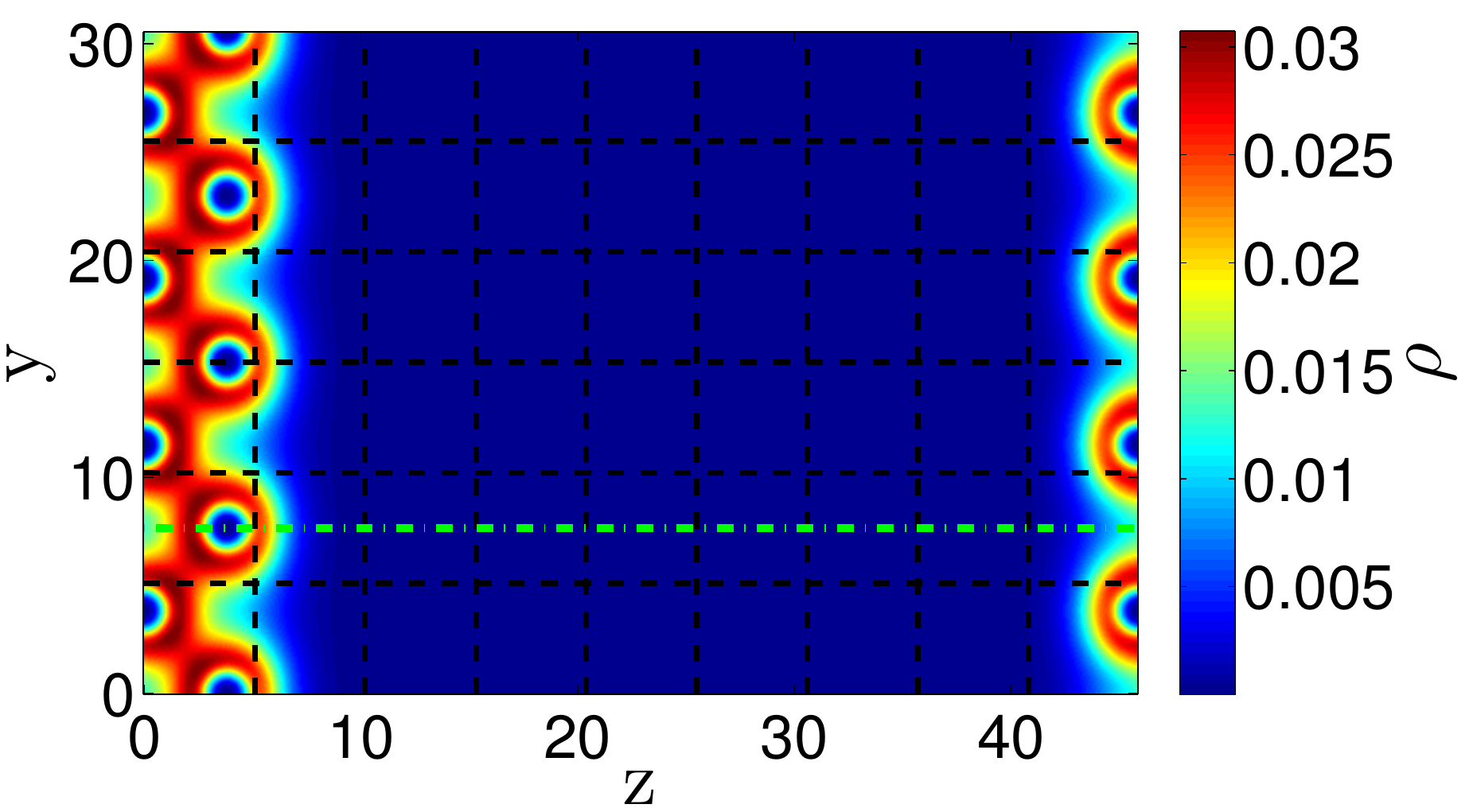}}
  }
  \hfill
  \subfloat[Fixed z = 0 \label{fig:Al6x9xyslice}]{%
    \raisebox{-0.5\height}{\includegraphics[width=0.47\textwidth]{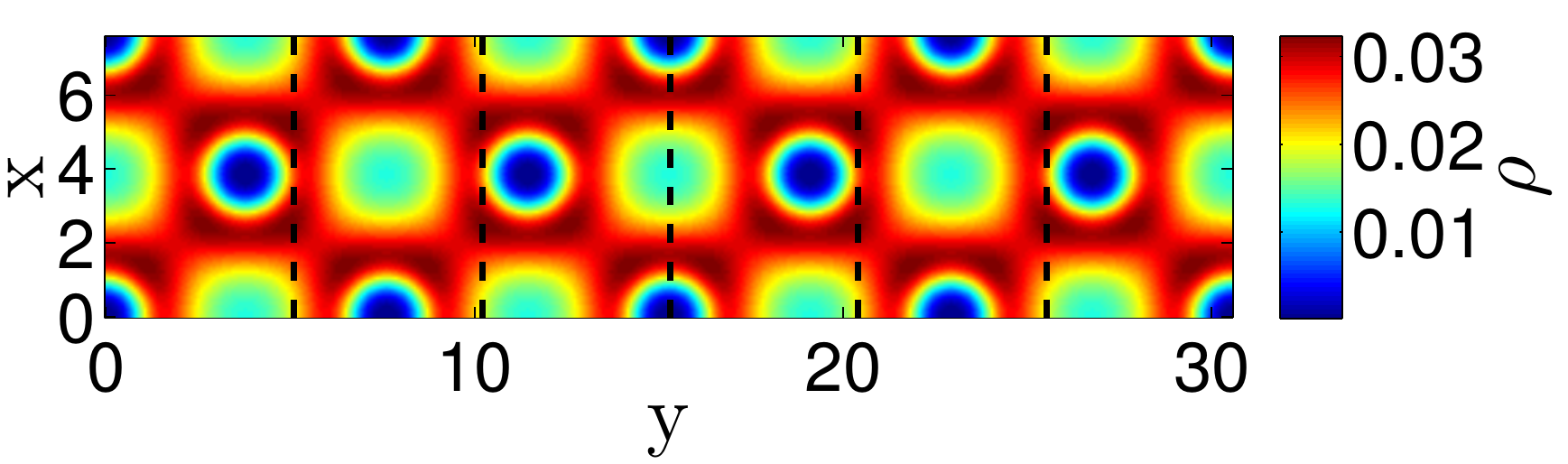}}
  }
  \caption{Electron density $\rho$ in the (a) $y-z$ plane and in the (b)
	$x-y$ plane. The partition of the global domain into $1 \times 6 \times 9$ 
	elements is indicated by black dashed lines.}
  \label{fig:Al6x9slices}
\end{figure}

Fig.~\ref{fig:Al6x9yzsliceerr} shows the pointwise error of the electron
density corresponding to the slice shown in Fig.~\ref{fig:Al6x9yzslice}, with 50 adaptive
local basis functions used in each element.
The error of the
density $\rho$ is highly non-uniform across different elements; the error
in the elements which contain atoms can be more than $5$ orders of
magnitude larger than that in elements in the vacuum region.  

Fig.~\ref{fig:Al6x91Dslice} shows the slice of electron density
along $y=7.6534$ as indicated by the green dash-dotted line in
Fig.~\ref{fig:Al6x9yzslice}.  Fig.~\ref{fig:Al6x91Dsliceerr} shows the
error of the electron density, along with the values of the local estimator
$\eta_K^2$ in each of the elements $K$ through which the slice passes. 
We observe that the local estimator provides an upper bound for the local
error of the electron density, and can be used to indicate the relative
contributions of the various elements to this error in the adaptive
refinement procedure.

\begin{figure}[ht]
  \centering
  \includegraphics[width=0.47\textwidth]{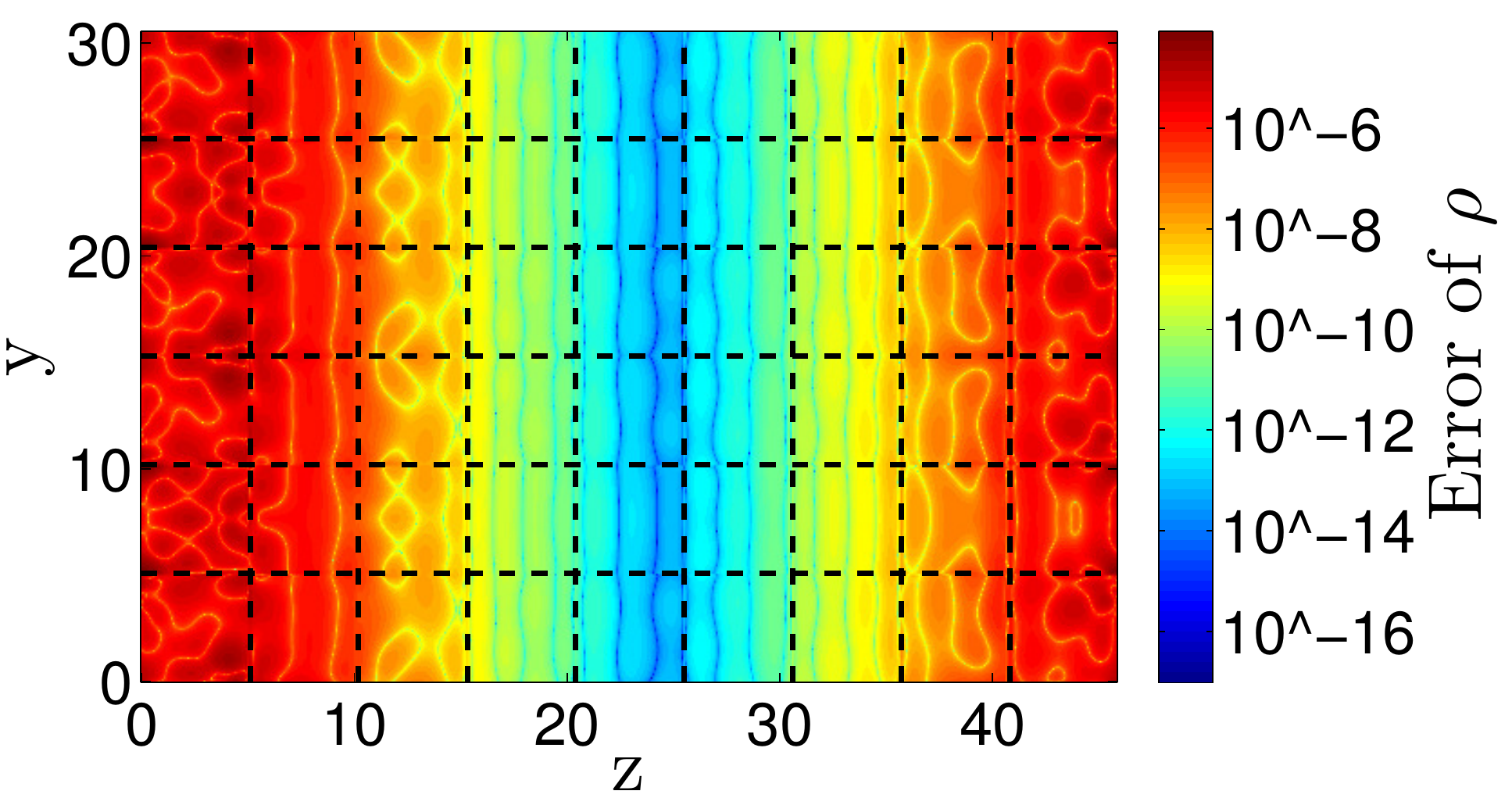}
  \caption{The pointwise error of the electron density $\rho$ (in log
    scale) for the slice shown in Figure \ref{fig:Al6x9yzslice} when 50
    adaptive local basis functions are used in each element.  The error in
    different elements differ by several orders of magnitude, which
    indicates that non-uniform distribution of basis functions may be used
    to generate more efficient electronic structure calculations for this
    system. The error is computed by comparing with the DGDFT result an
  accurate calculation obtained using the software package ABINIT.}
  \label{fig:Al6x9yzsliceerr}
\end{figure}

\begin{figure}[ht]
  \centering
  \subfloat[Electron density \label{fig:Al6x91Dslice}]{%
    \raisebox{-0.5\height}{\includegraphics[width=0.47\textwidth]{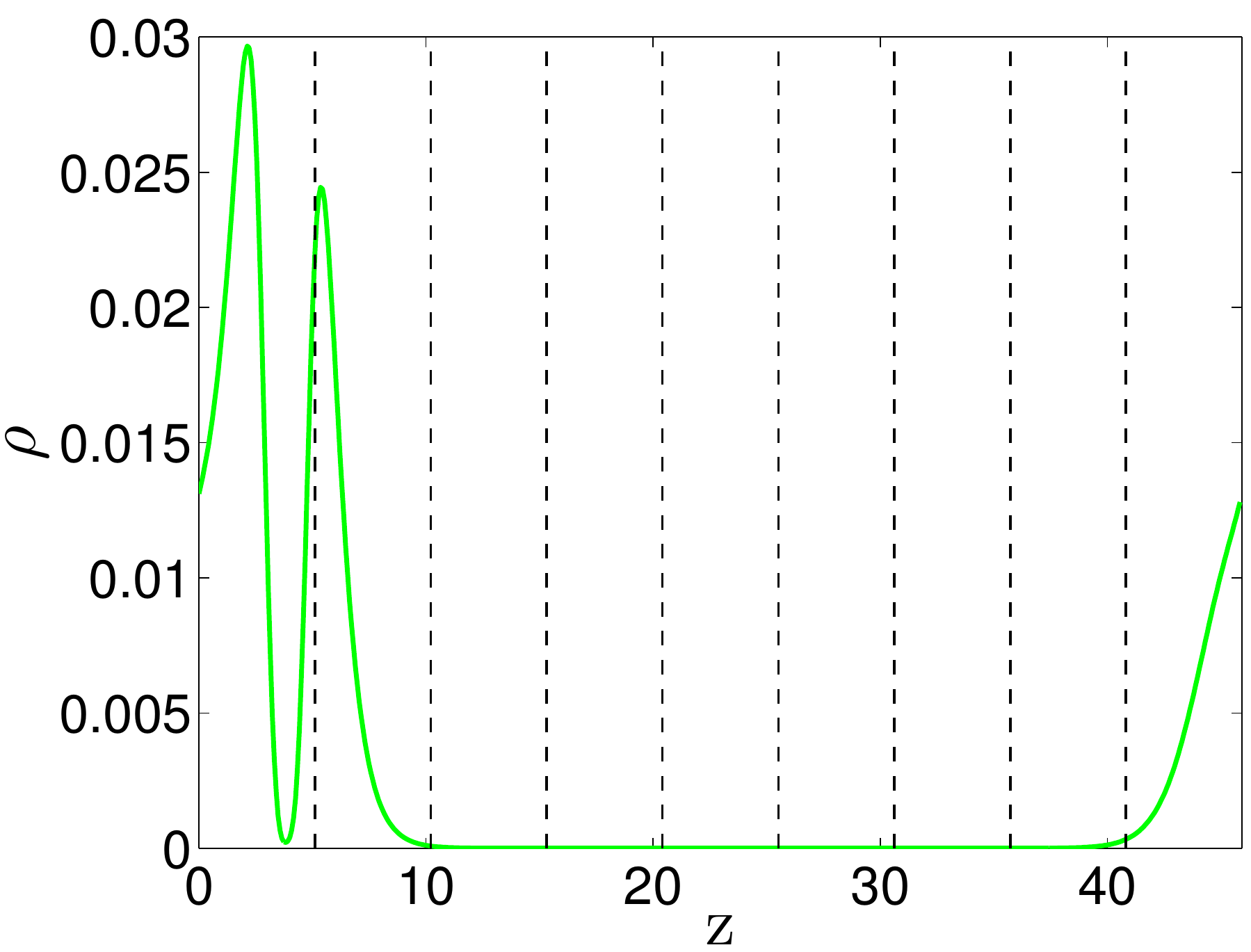}}
  }
  \hfill
  \subfloat[Pointwise error of slice and local estimator \label{fig:Al6x91Dsliceerr}]{%
    \raisebox{-0.5\height}{\includegraphics[width=0.47\textwidth]{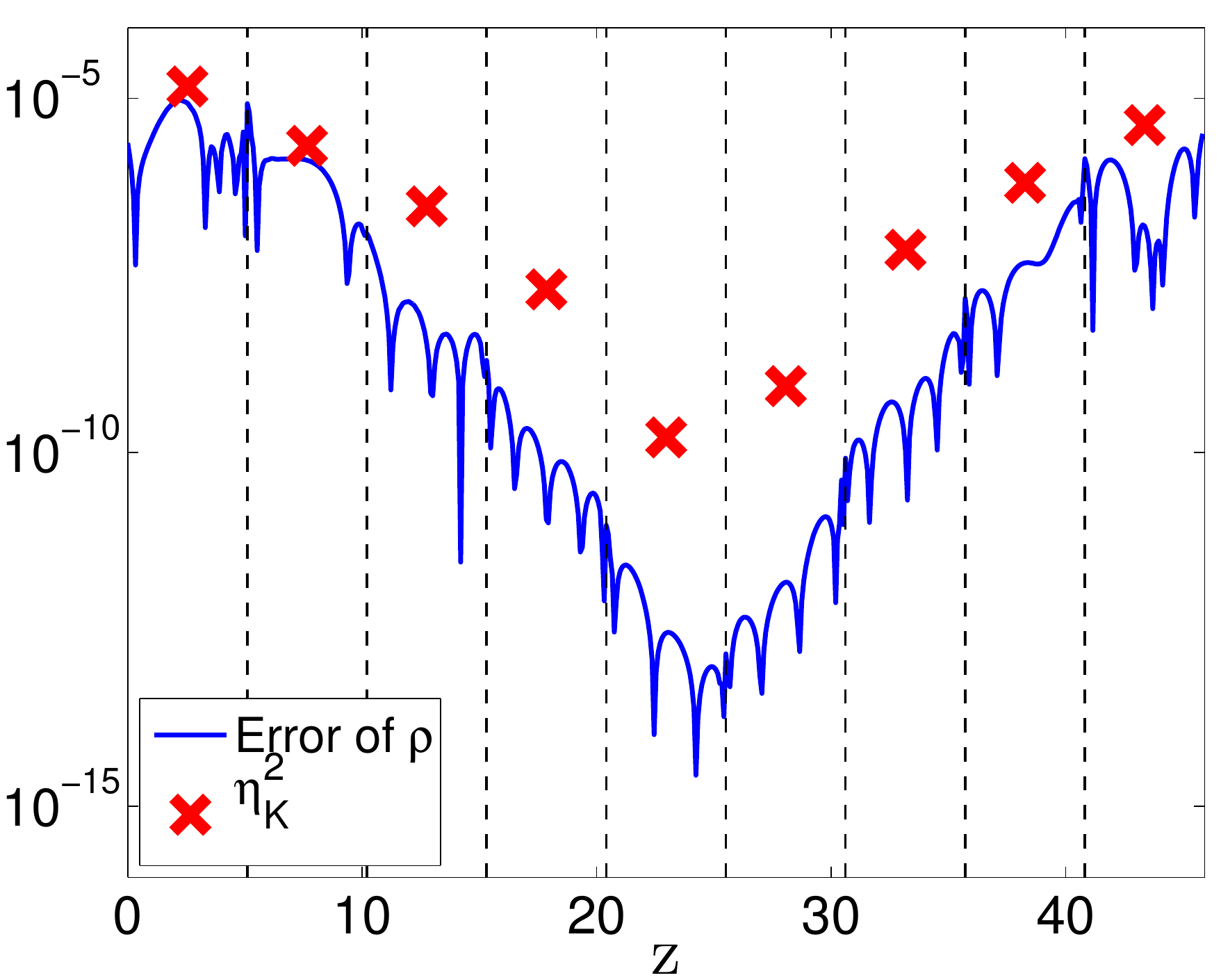}}
  }
	\caption{(a) One-dimensional slice of the electron density indicated
	by the green dash-dotted line in Fig.~\ref{fig:Al6x9yzslice} at
	$y=7.6534$.  (b) The
	error of the electron density along with the local estimator
	$\eta_K^2$ along the one dimensional slice, demonstrating the
      effectiveness of the local estimator at predicting the local error of
    $\rho$.}
  \label{fig:Al6x91Dslicefigs}
\end{figure}

Let us next compare the accuracy and efficiency of uniform and non-uniform
basis refinement schemes for this system.  In both schemes, we begin with
25 adaptive local basis functions in all elements on the first step. We
specify the basis refinement step size $b_{\text{step}} = 5$ and the number
of steps $n = 6$. In the uniform refinement scheme, we simply add
$b_{\text{step}} = 5$ basis functions to each element on each step. The
non-uniform refinement scheme follows Algorithm \ref{alg:bref} with
$\epsilon_{\max} = 5 \times 10^{-6}$ and $\epsilon_{\min} = 5 \times
10^{-7}$. 

We first examine the distribution of basis functions produced by
the non-uniform refinement scheme, shown for all steps in Figure
\ref{fig:Al6x9yzbasisnu}. The scheme eventually places more basis functions
in elements for which a uniform distribution of basis functions produces
larger pointwise errors. This becomes clear when comparing Figures
\ref{fig:Al6x9yzbasisnu} and \ref{fig:Al6x9yzsliceerr}, the latter of which
gives the pointwise error in the density for the last step of the uniform
refinement scheme.  Indeed, the non-uniform scheme allocates \textit{zero
basis functions} to the elements which contain the smallest pointwise
errors in the uniform scheme, and the maximum possible number of basis
functions to the elements which contain the largest pointwise errors in the
uniform scheme.

\begin{figure}[ht]
  \centering
  \subfloat[Step 1 \label{fig:Al6x9yzbasisnu1}]{%
    \includegraphics[width=0.3\textwidth]{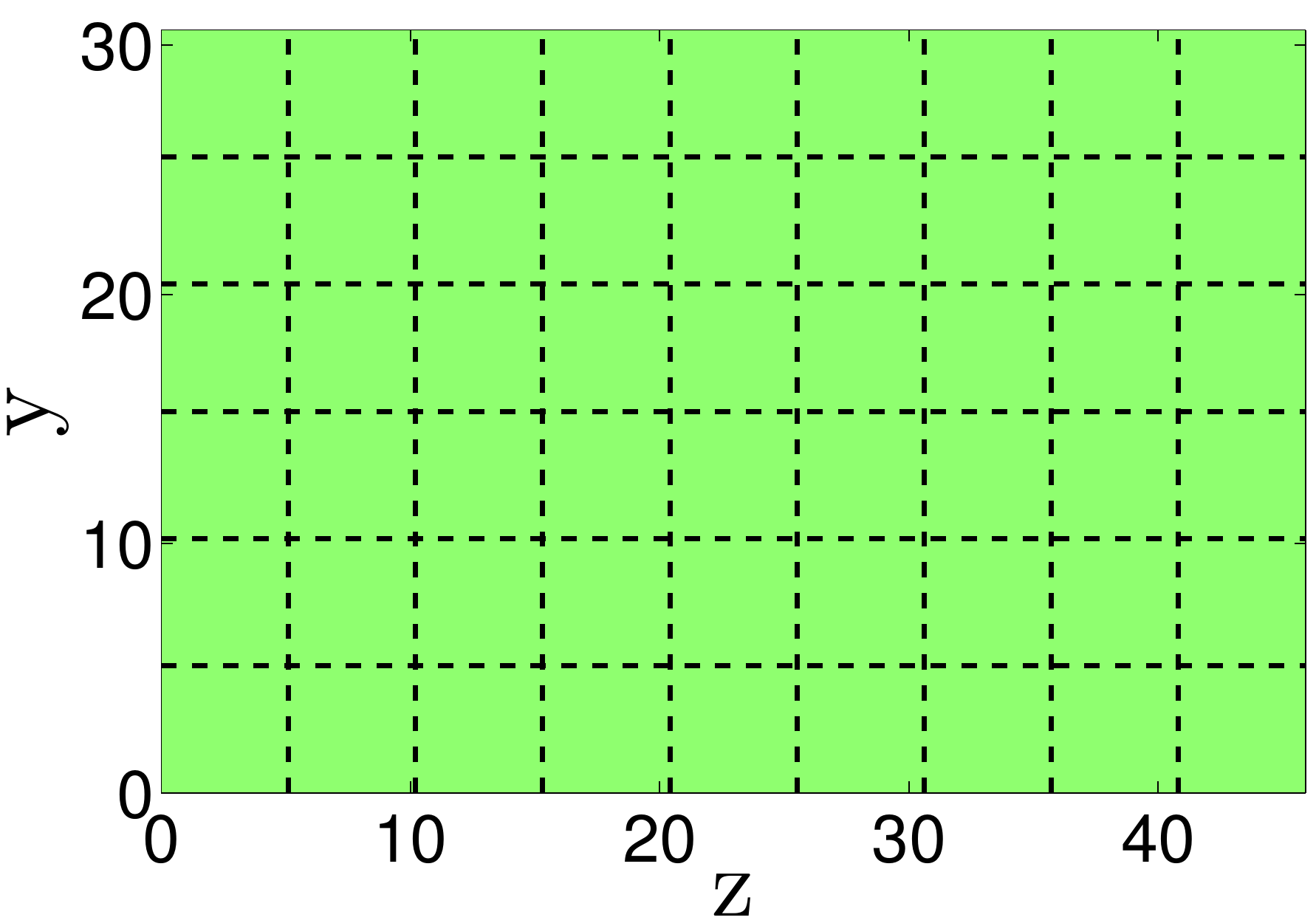}
  }
  \hfill
  \subfloat[Step 2 \label{fig:Al6x9yzbasisnu2}]{%
    \includegraphics[width=0.3\textwidth]{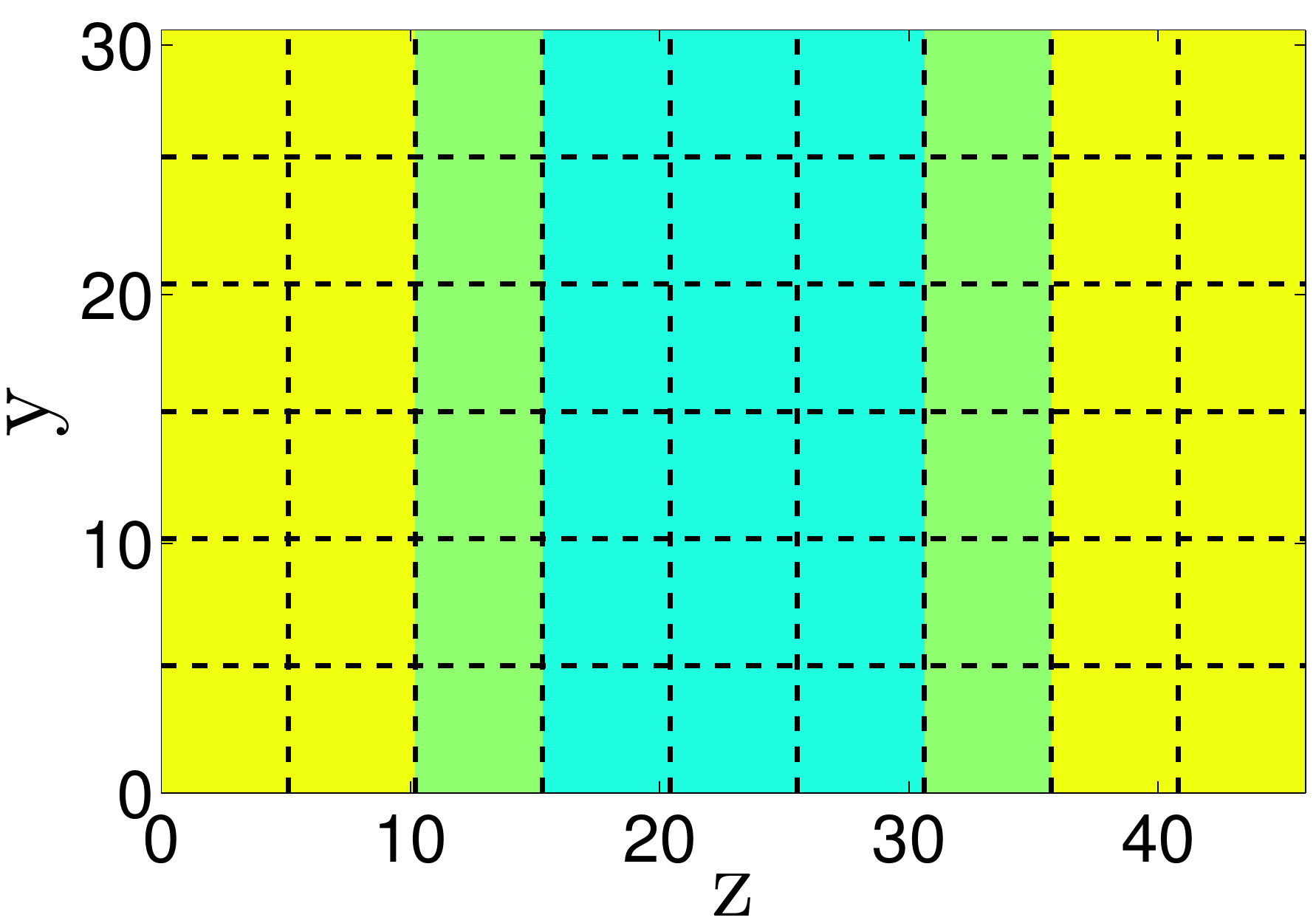}
  }
  \hfill
  \subfloat[Step 3 \label{fig:Al6x9yzbasisnu3}]{%
    \includegraphics[width=0.3\textwidth]{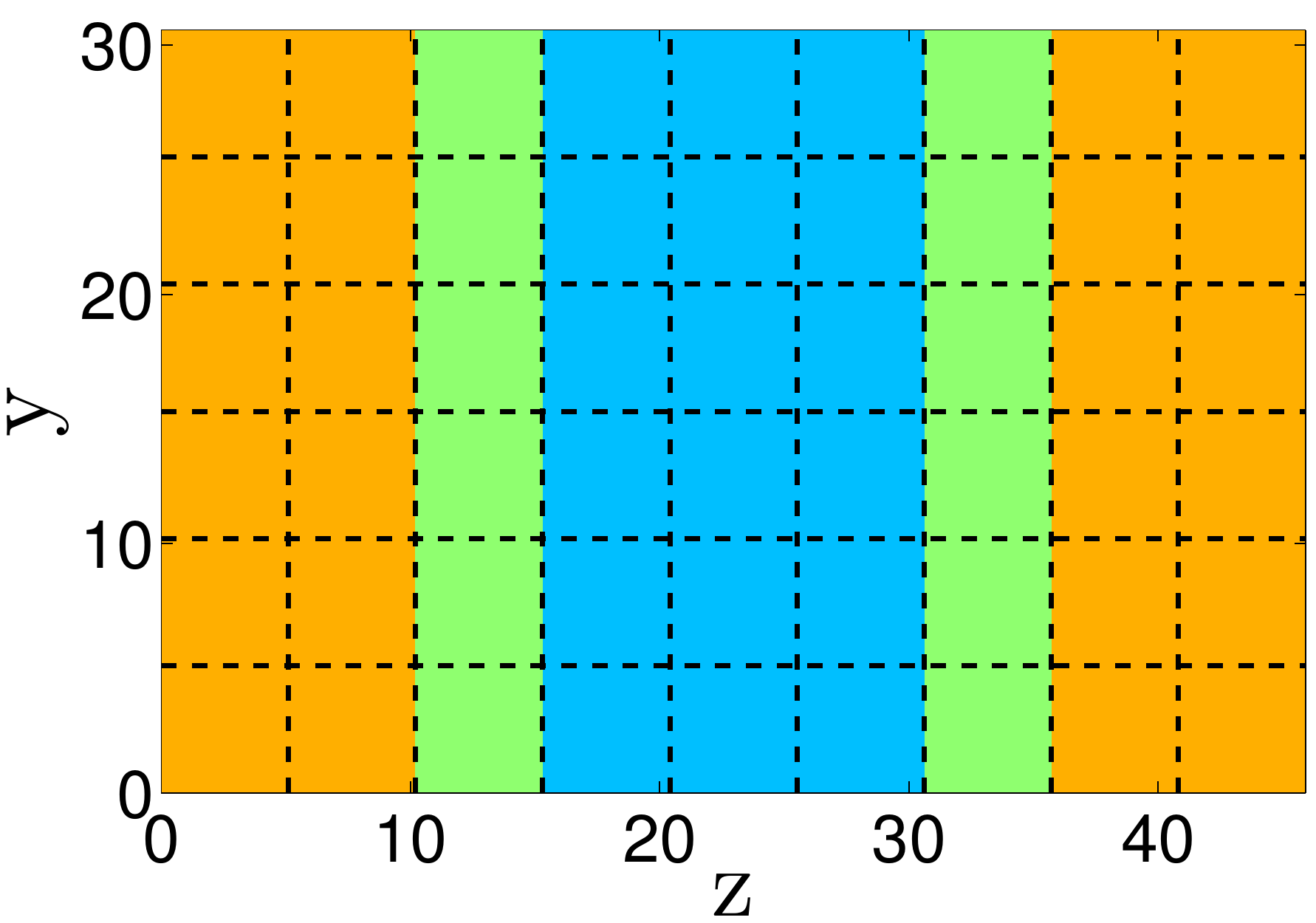}
  }

  \subfloat[Step 4 \label{fig:Al6x9yzbasisnu4}]{%
    \includegraphics[width=0.3\textwidth]{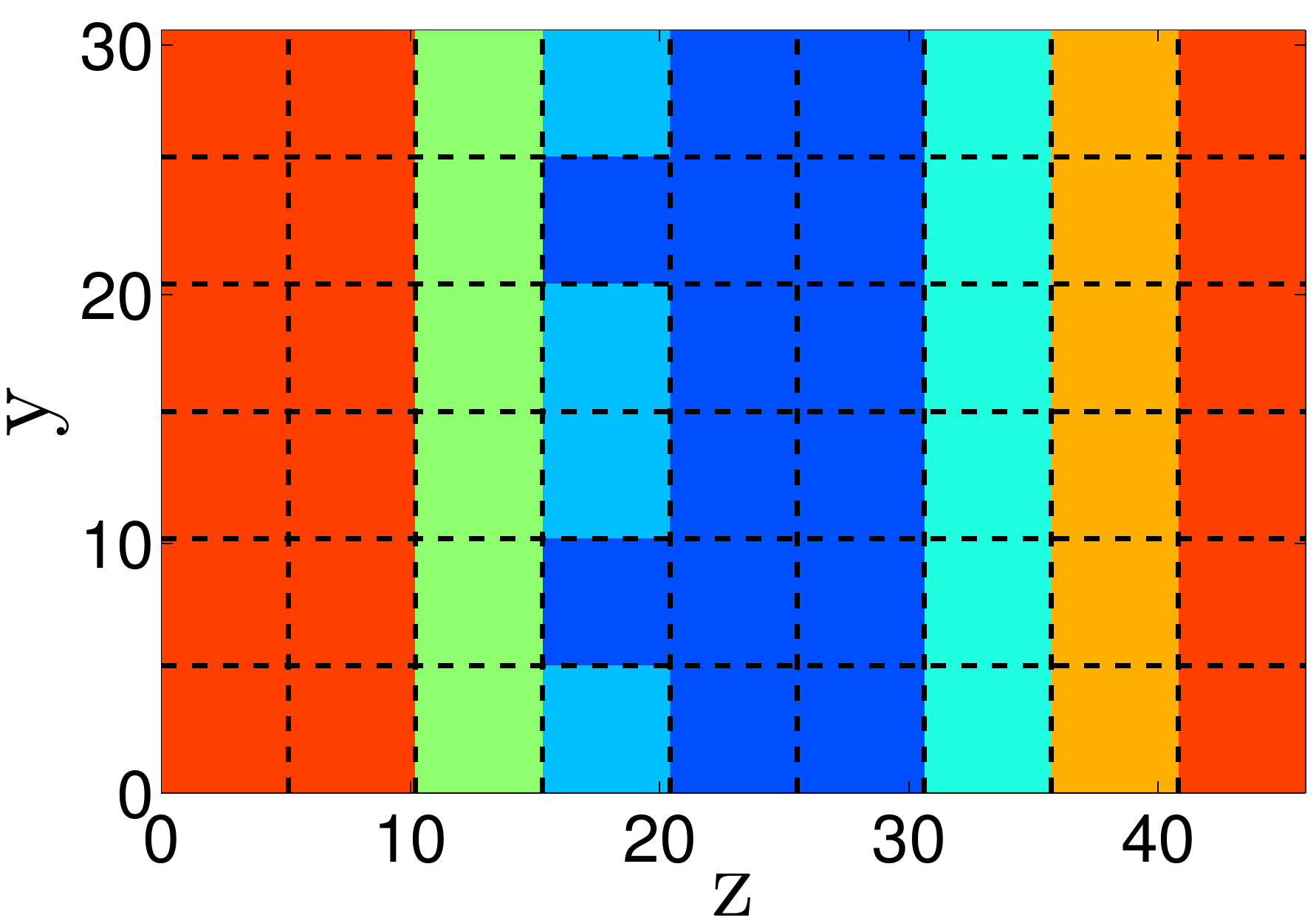}
  }
  \hfill
  \subfloat[Step 5 \label{fig:Al6x9yzbasisnu5}]{%
    \includegraphics[width=0.3\textwidth]{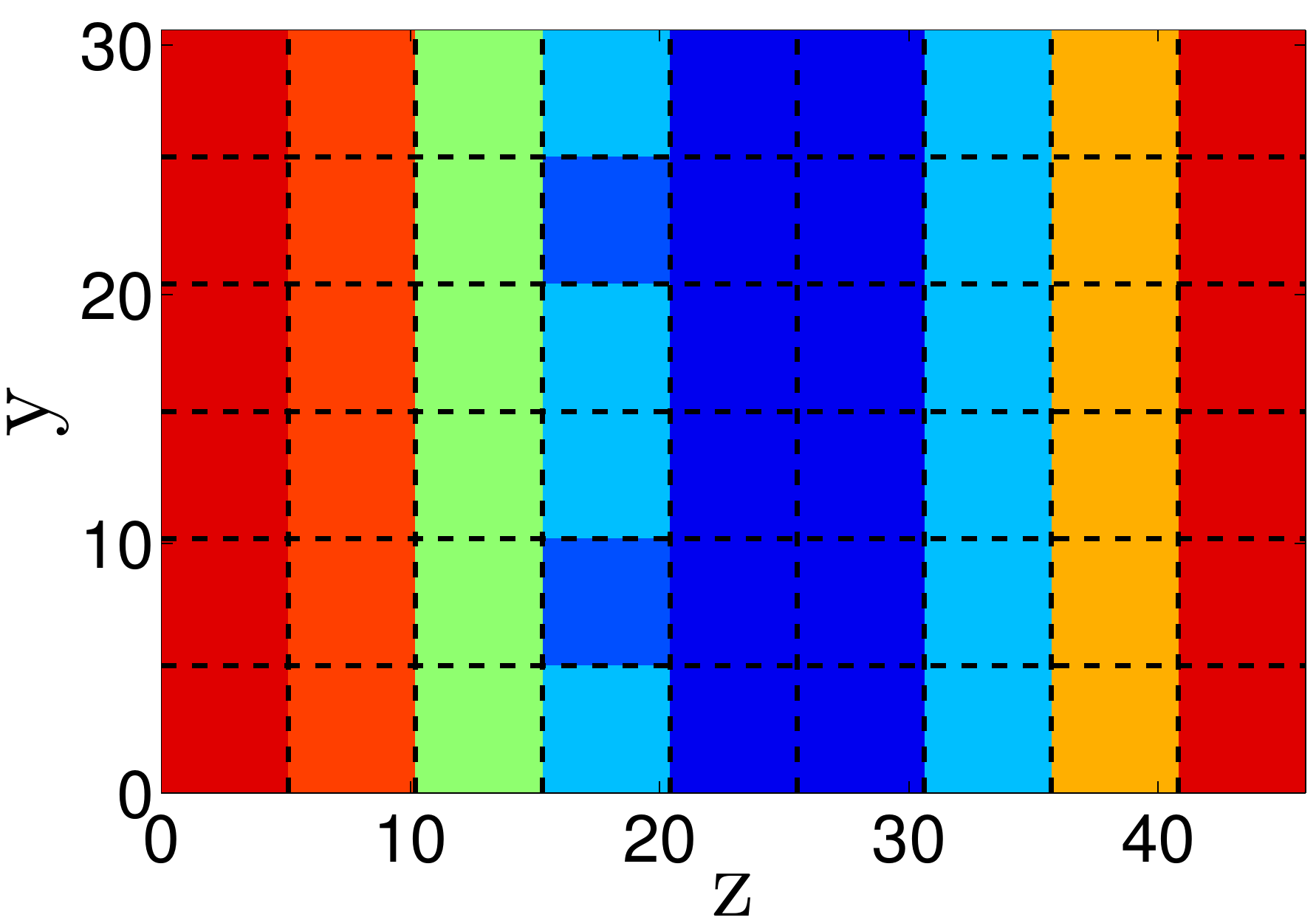}
  }
  \hfill
  \subfloat[Step 6 \label{fig:Al6x9yzbasisnu6}]{%
    \includegraphics[width=0.3\textwidth]{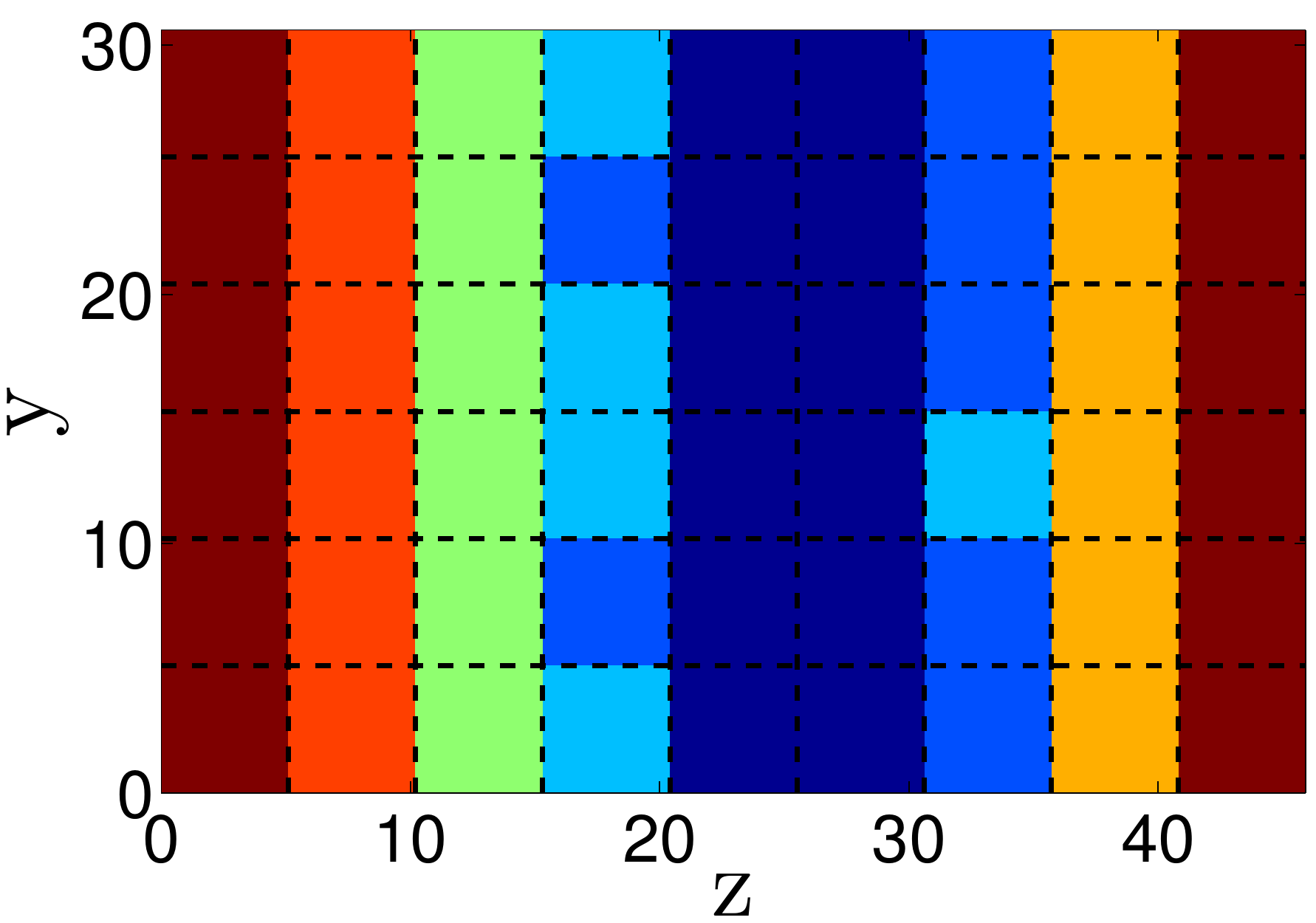}
  }

  \vspace{-4mm}
  \subfloat{%
    {\includegraphics[width=0.5\textwidth]{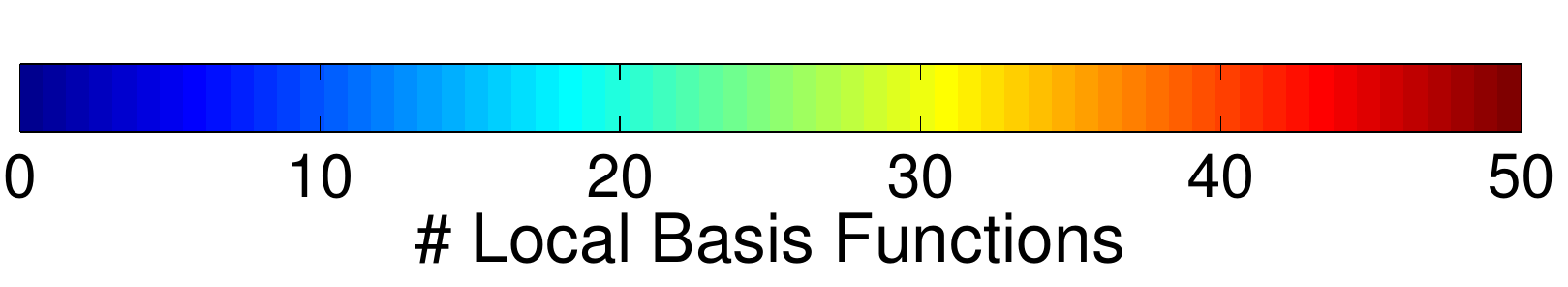}}
  }

  \caption{Distribution of adaptive local basis functions in six steps of
  non-uniform refinement for aluminum system. The final distribution mirrors
  the pointwise errors shown in Fig.~\ref{fig:Al6x9yzsliceerr}.} 
  \label{fig:Al6x9yzbasisnu}
\end{figure}

At the last step of refinement, the non-uniform refinement scheme uses half
as many basis functions per atom as the uniform refinement scheme, as shown
in Fig.~\ref{fig:Al6x9totalbasis}.  These savings are obtained without
sacrificing accuracy.  Fig.~\ref{fig:Al6x9energyerr} show that the error
per atom of the total energy is nearly equal using both methods, despite
the large difference in the number of basis functions used. We also measure
the accuracy of the atomic force, which is given by the derivative of the
energy with respect to the atomic positions. The atomic force is used in
various applications including geometry optimization and molecular
dynamics.  We measure both the maximum and the average Euclidean error of
the atomic forces among all atoms.  Fig.~\ref{fig:Al6x9forceerr} shows that
both accuracy measurements are nearly the same using uniform and
non-uniform refinement.  We therefore see that non-uniform refinement
significantly improves the efficiency of the adaptive local basis functions
by simply redistributing basis functions according to the predictions of
the local estimator, without sacrificing accuracy.

\begin{figure}[ht]
  \centering
  \subfloat[Error per atom of total energy (Hartree/atom) for uniform refinement (U.
	Ref.) and non-uniform
	refinement (N.U. Ref), along with the global estimator. We also
        show the individual terms of the estimator summed over all elements
        $K \in \mc{T}$; the residual $\eta_{R}^{2}$, the discontinuity
	of the gradient $\eta_{G}^2$, and the discontinuity of the function values
	$\eta_{V}^2$.
	\label{fig:Al6x9energyerr}]{%
    \includegraphics[width=0.47\textwidth]{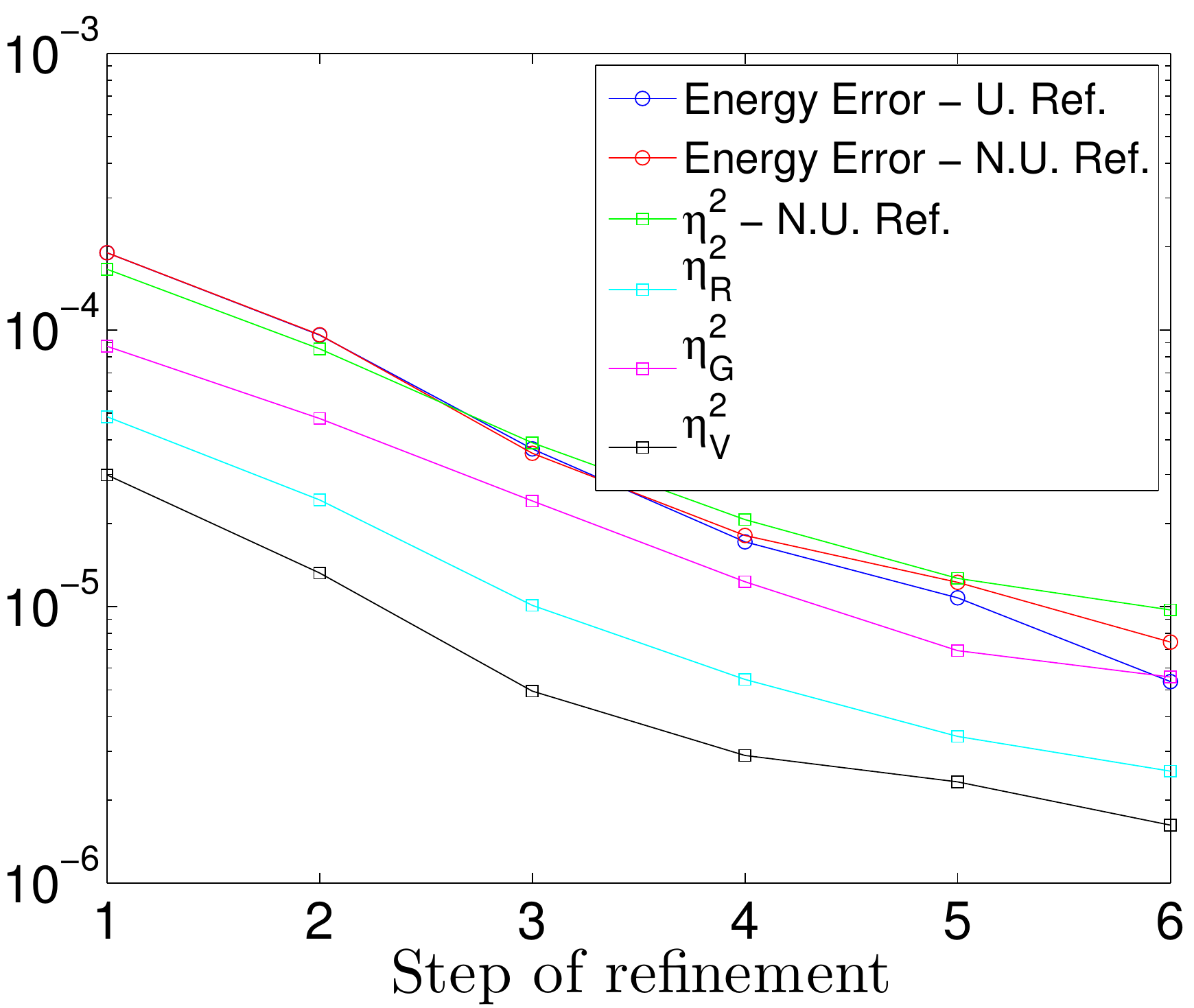}
  }
  \hfill
  \subfloat[Error of atomic force (au) for uniform and non-uniform refinement. \label{fig:Al6x9forceerr}]{%
    \includegraphics[width=0.47\textwidth]{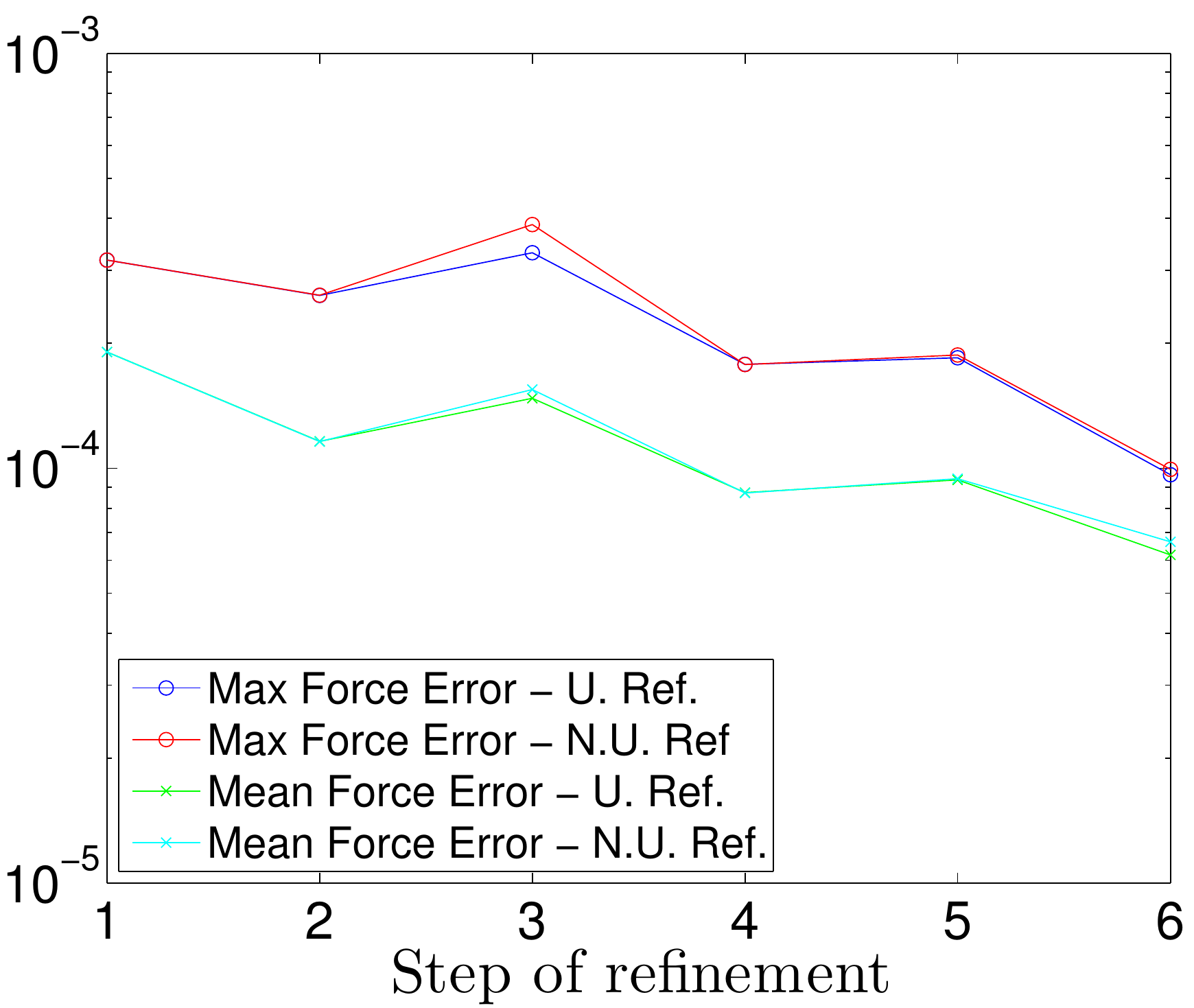}
  }

  \subfloat[Number of basis functions per atom for uniform and
  non-uniform refinement. \label{fig:Al6x9totalbasis}]{%
    \includegraphics[width=0.47\textwidth]{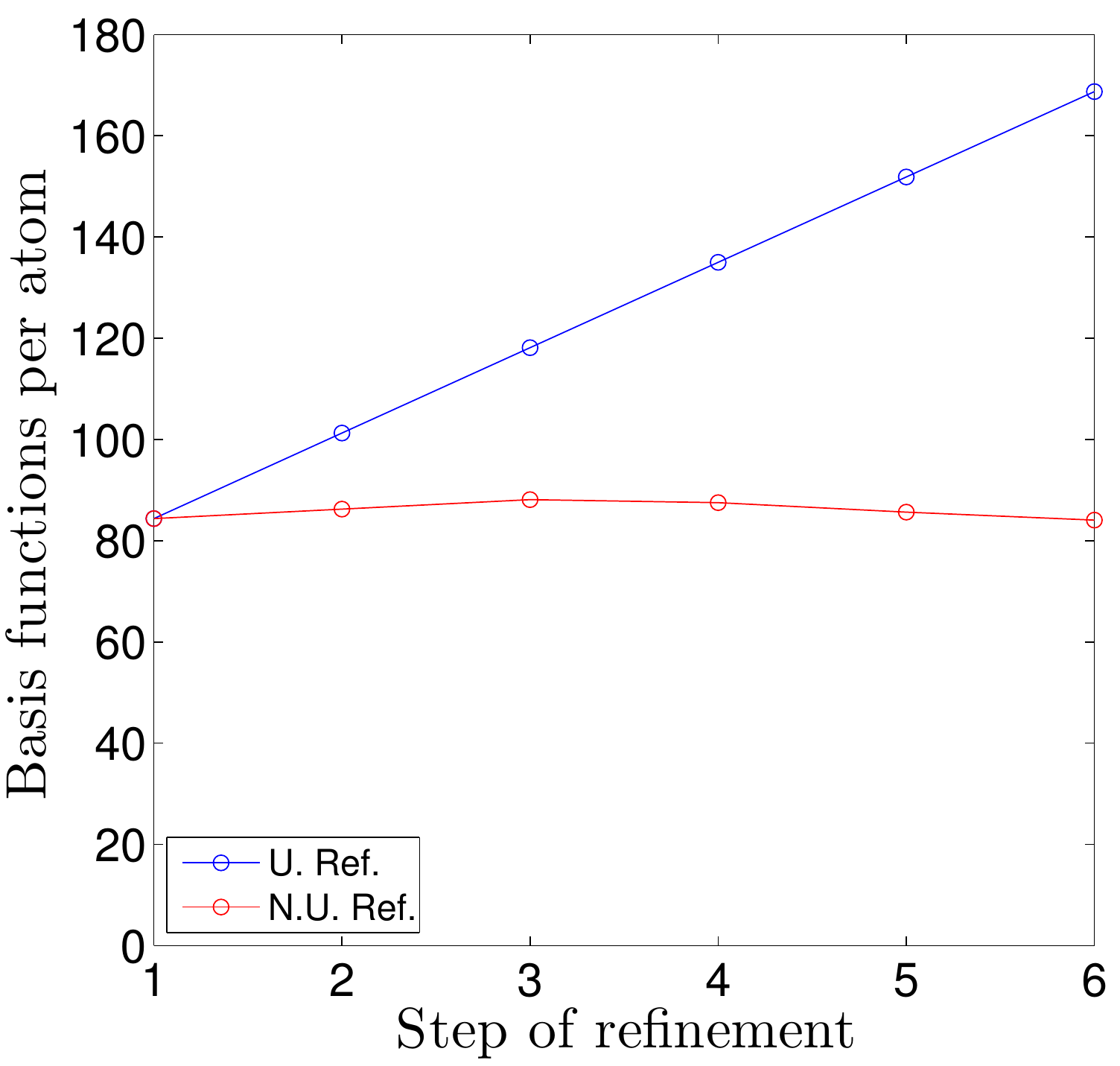}
  }

  \caption{Accuracy and savings for aluminum system calculations. The
	non-uniform refinement scheme produces results which are nearly as
	accurate as those produced by the uniform refinement scheme, using
	only half as many basis functions.}
  \label{fig:Al6x9errorandtotbasis}
\end{figure}

We lastly demonstrate that the use of non-uniformly distributed adaptive
local basis functions gives significant savings over the use of
planewave basis functions as in ABINIT. The number of basis functions in
planewave-based electronic structure calculations is usually denoted by
the kinetic energy cutoff ($\Ecut$). In atomic units, the relation
between the total number $N_{\mathrm{pw}}$ of basis functions used in a
planewave calculation and $\Ecut$ is approximately (depending on
adjustments to the number of grid points for performance
improvements)~\cite{Martin2004}
\begin{equation}
	N_{\mathrm{pw}} = \left( \frac{\sqrt{2\Ecut}}{\pi} \right)^{3}
	\mathrm{Vol},
	\label{eqn:pwnum}
\end{equation}
where $\mathrm{Vol}$ is the volume of the computational domain.

Fig.~\ref{fig:Al6x9PWComparison} plots the number of basis
functions per atom against the error per atom of the total energy using the planewave basis
set obtained from ABINIT, a uniform distribution of adaptive local basis
functions obtained from DGDFT, and a non-uniform distribution of adaptive
local basis functions obtained from DGDFT. We observe that DGDFT
calculations use many fewer basis functions than ABINIT calculations to
achieve similar error. For example, to achieve an error of the total energy
on the order of $10^{-6}$ Hartree per atom, ABINIT requires around $10,000$
planewave basis functions per atom, whereas DGDFT requires approximately
$84$ adaptive local basis functions per atom when non-uniform basis
refinement is used. 

\begin{figure}[ht]
  \centering
  \includegraphics[width=0.47\textwidth]{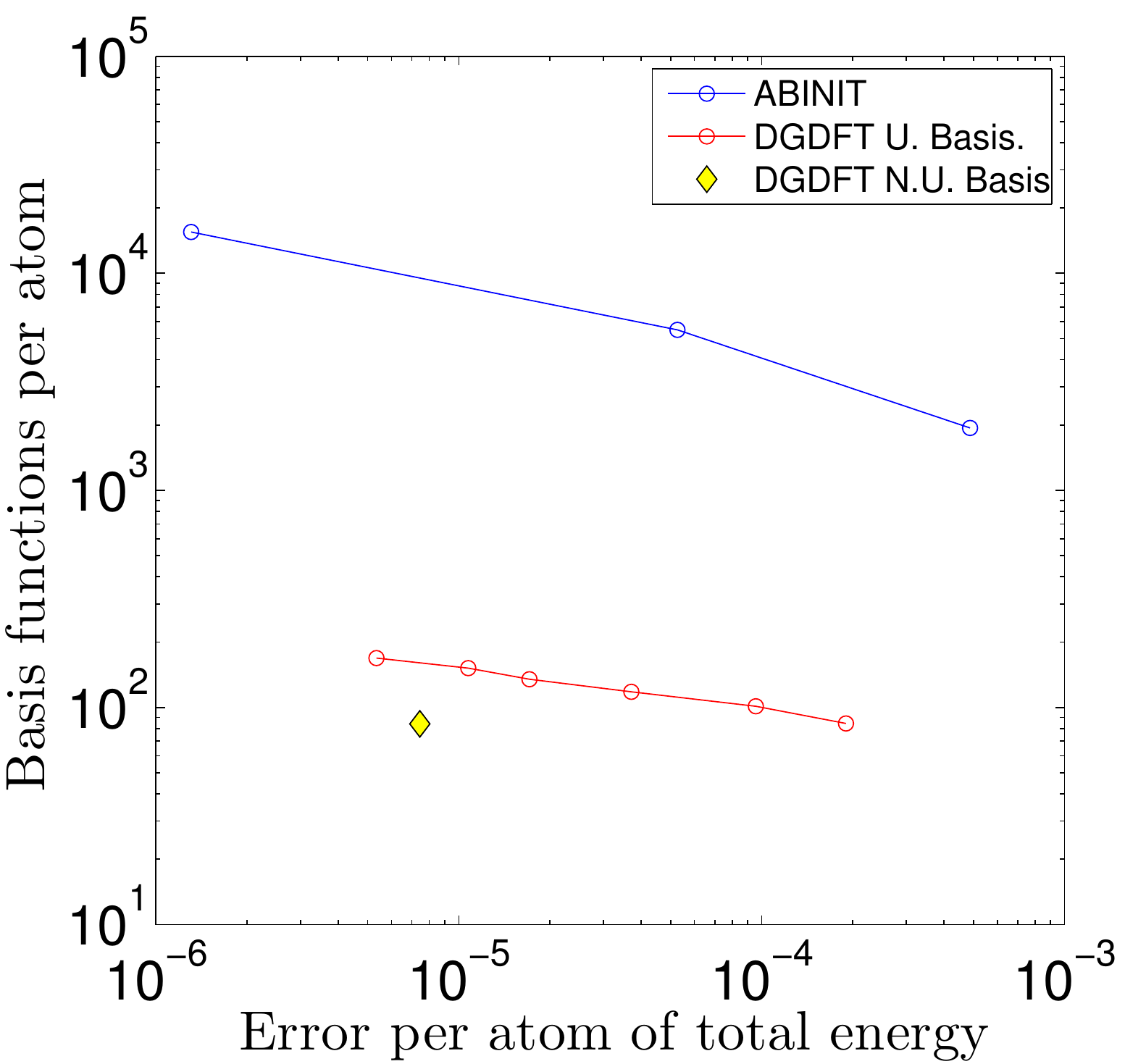}
  \caption{Number of basis functions used to achieve a given error of
    the total energy for the aluminum example. Non-uniform adaptive
    local basis sets obtained from DGDFT are approximately two orders of
    magnitude smaller than planewave basis sets obtained from ABINIT for
    similar error.}
  \label{fig:Al6x9PWComparison}
\end{figure}

\subsection{Graphene oxide}

We next consider a layer of graphene oxide in water, consisting of $160$
atoms, with the atomic configuration shown in Fig.~\ref{fig:GOconfig}.
The size of the supercell is $19.63$ au, $56.69$ au and $28.35$ au along
the $x, y,$ and $z$ directions, respectively. The computational domain is
partitioned into a $4 \times 12 \times 6$ grid of elements. As in the
case of the aluminum system, 
a uniform grid is used to represent quantities such as the electron density
and potential in the global domain in DGDFT, with $160$, $456$, and $228$
grid points along the
$x$, $y$, and $z$ directions, respectively. A Legendre-Gauss-Lobatto (LGL) grid is
used inside each element to construct the DG Hamiltonian matrix, and
the number of LGL grid points inside each element is $80$, $76$, and $76$
along the $x$, $y$, and $z$ directions, respectively.
Fig.~\ref{fig:GOdenslices} shows slices of the density
across various planes. 

We perform both uniform and non-uniform refinement of the basis
functions. We begin with a uniform distribution of 40 basis functions in
each element for both schemes. The basis refinement step size is set at
$b_{\text{step}} = 10$ and the number of refinement steps at $n = 6$.
For the non-uniform refinement scheme, we use $\epsilon_{max} = 10^{-4}$
and $\epsilon_{min} = 10^{-6}$. 
Fig.~\ref{fig:GObasisnu6} shows the final distributions of
adaptive local basis functions generated by the non-uniform refinement
process in the elements containing the slices shown in
Fig.~\ref{fig:GOdenslices}. The distribution of
adaptive local basis functions corresponds well to the highly
inhomogeneous electron density of the system. 

\begin{figure}[ht]
	\begin{center}
    \includegraphics[width=0.60\textwidth]{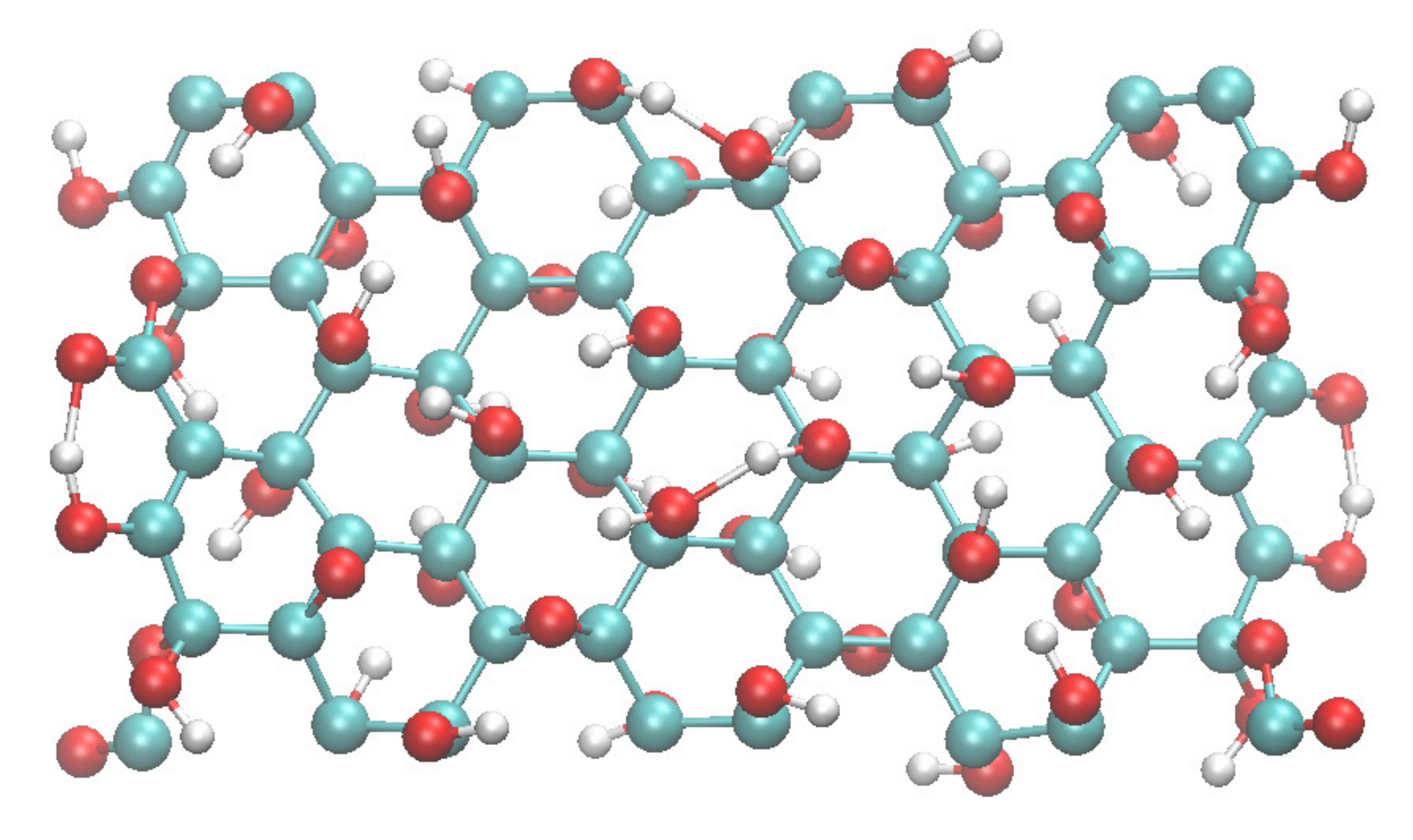}
	\end{center}
	\caption{Atomic configuration of a single layer of graphene oxide in
	water with $160$ atoms.}
	\label{fig:GOconfig}
\end{figure}

\begin{figure}[ht]
  \centering
  \subfloat[Fixed z = 9.2975 \label{fig:GOxyslice1}]{%
    \includegraphics[width=0.47\textwidth]{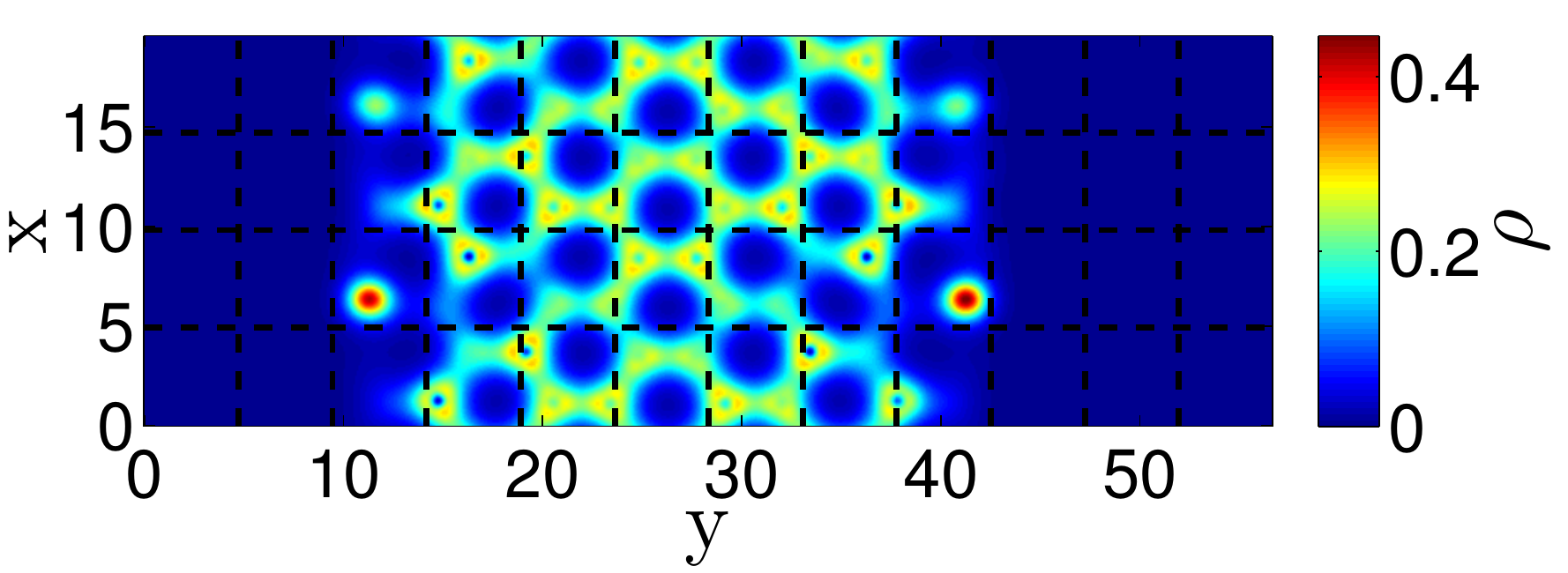}
  }
  \hfill
  \subfloat[Fixed z = 12.3966 \label{fig:GOxyslice2}]{%
    \includegraphics[width=0.47\textwidth]{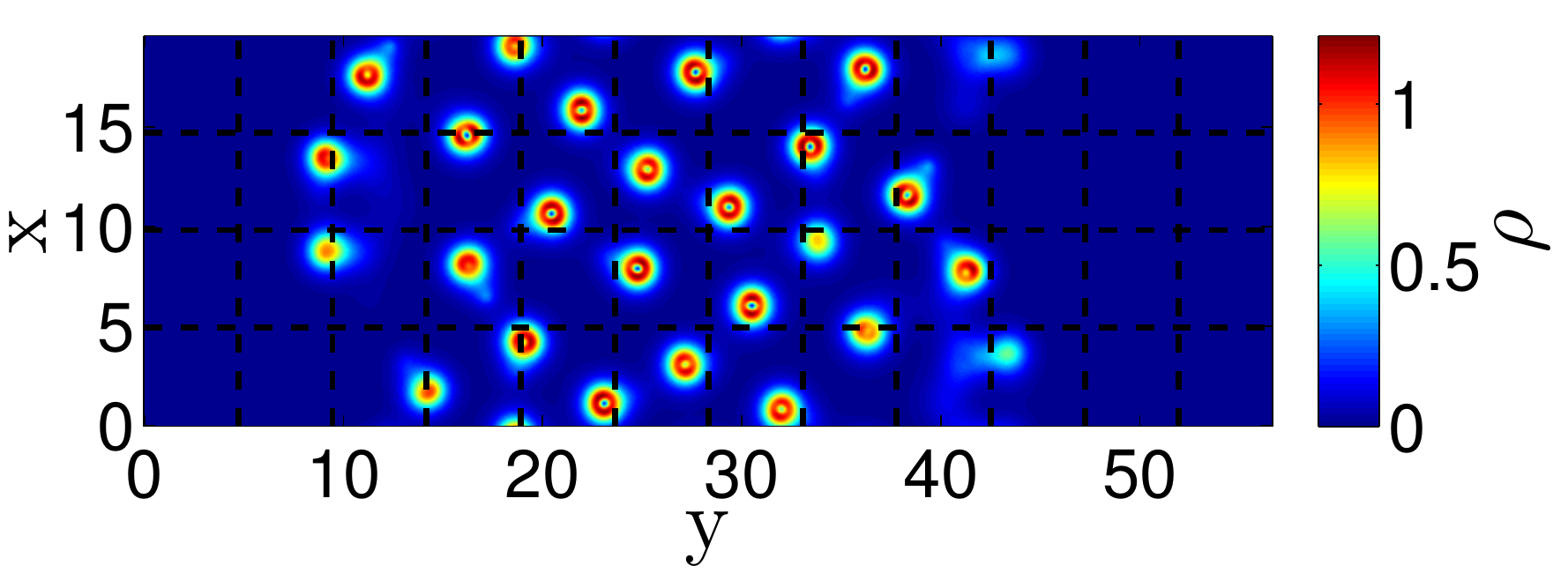}
  }

  \subfloat[Fixed x = 10.9079 \label{fig:GOyzslice}]{%
    \raisebox{-0.5\height}{\includegraphics[width=0.47\textwidth]{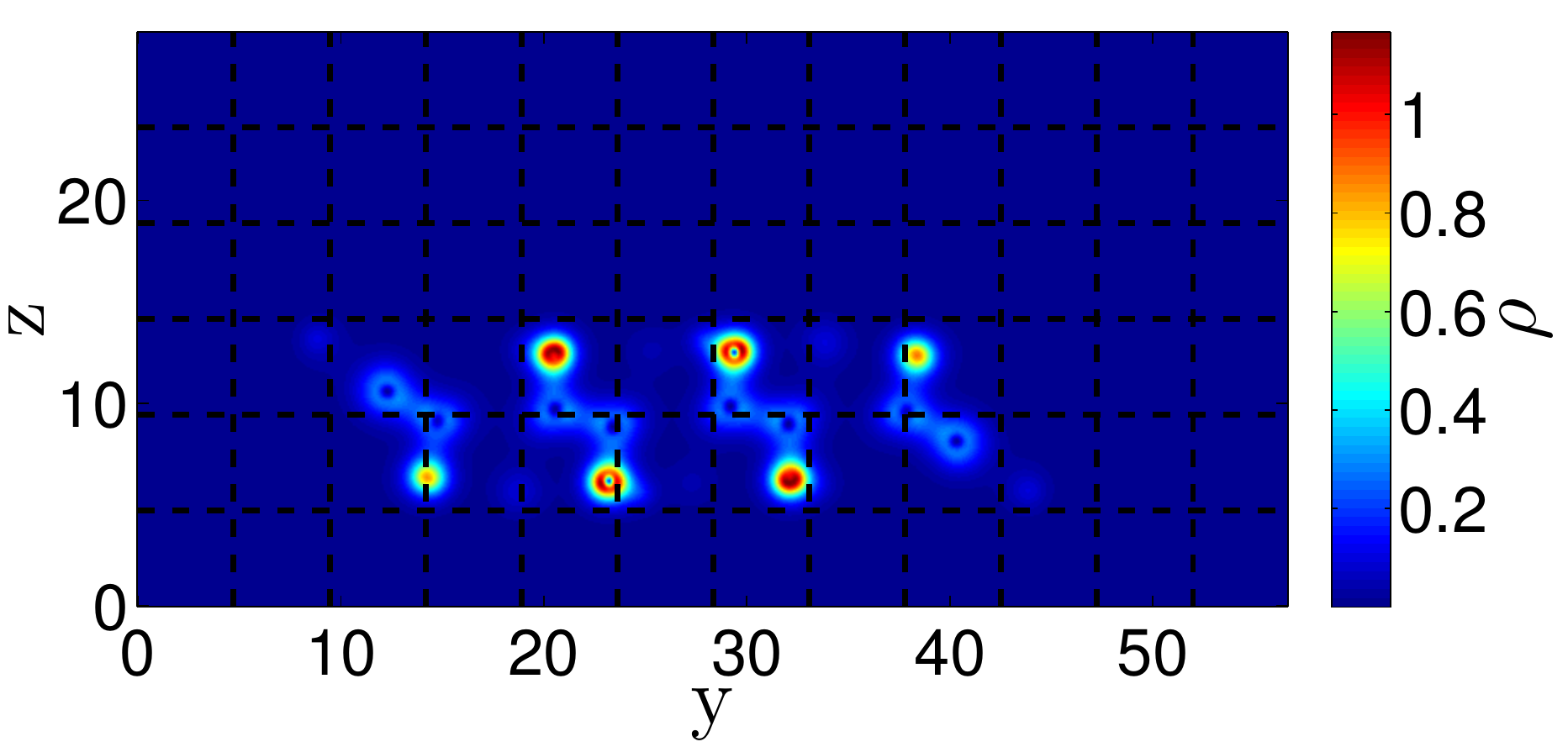}}
  }
  \hfill
 \subfloat[Fixed y = 24.0205 \label{fig:GOxzslice}]{%
   \raisebox{-0.5\height}{\includegraphics[width=0.47\textwidth]{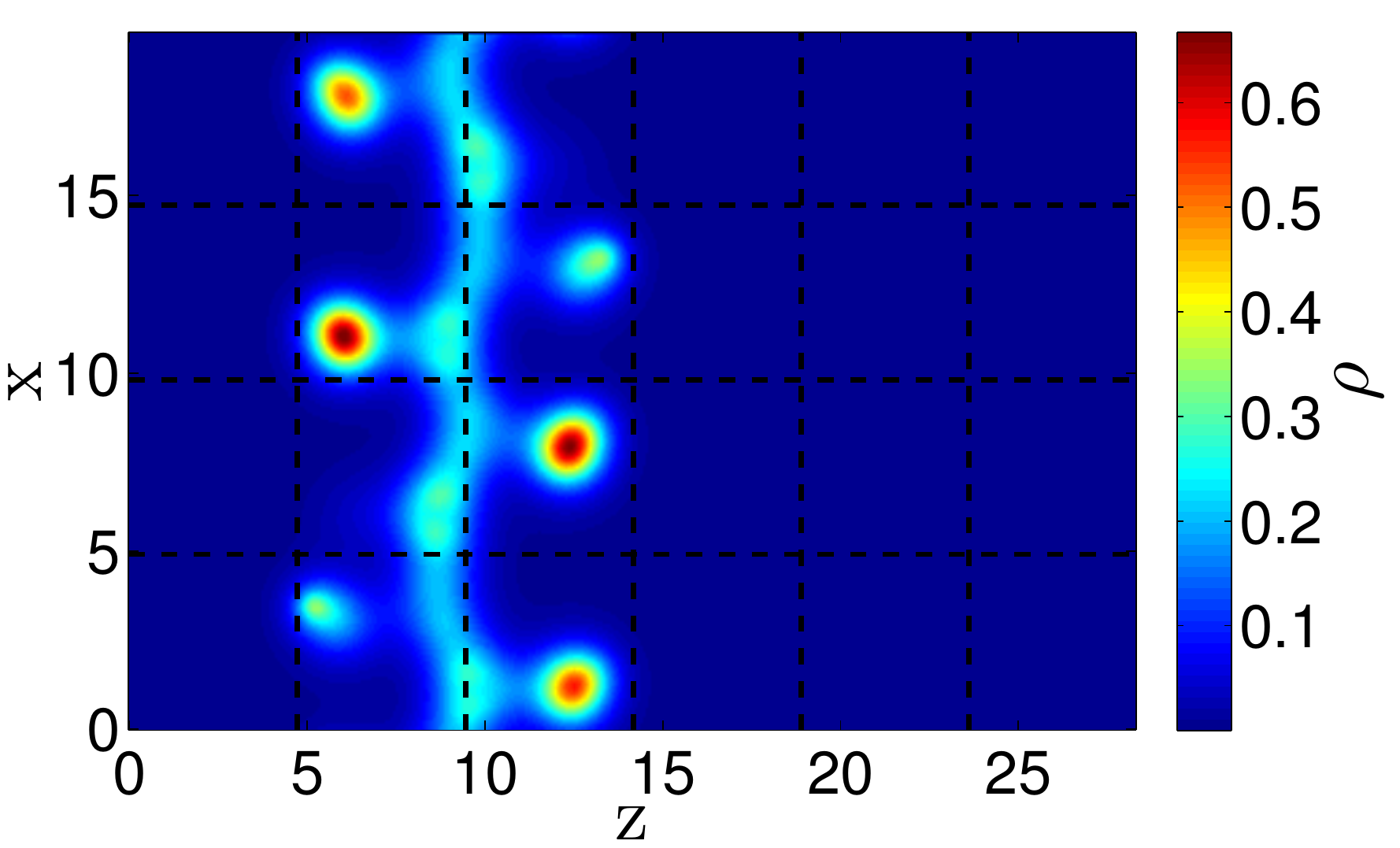}}
  }

  \caption{Two-dimensional slices of electron density for graphene oxide
  system, with $4 \times 12 \times 6$ grid used in DGDFT calculations indicated by black dashed
lines.}
  \label{fig:GOdenslices}
\end{figure}

\begin{figure}[ht]
  \centering
  \subfloat[Fixed z = 9.2975
  \label{fig:GOxybasisnu6}]{%
    \includegraphics[width=0.47\textwidth]{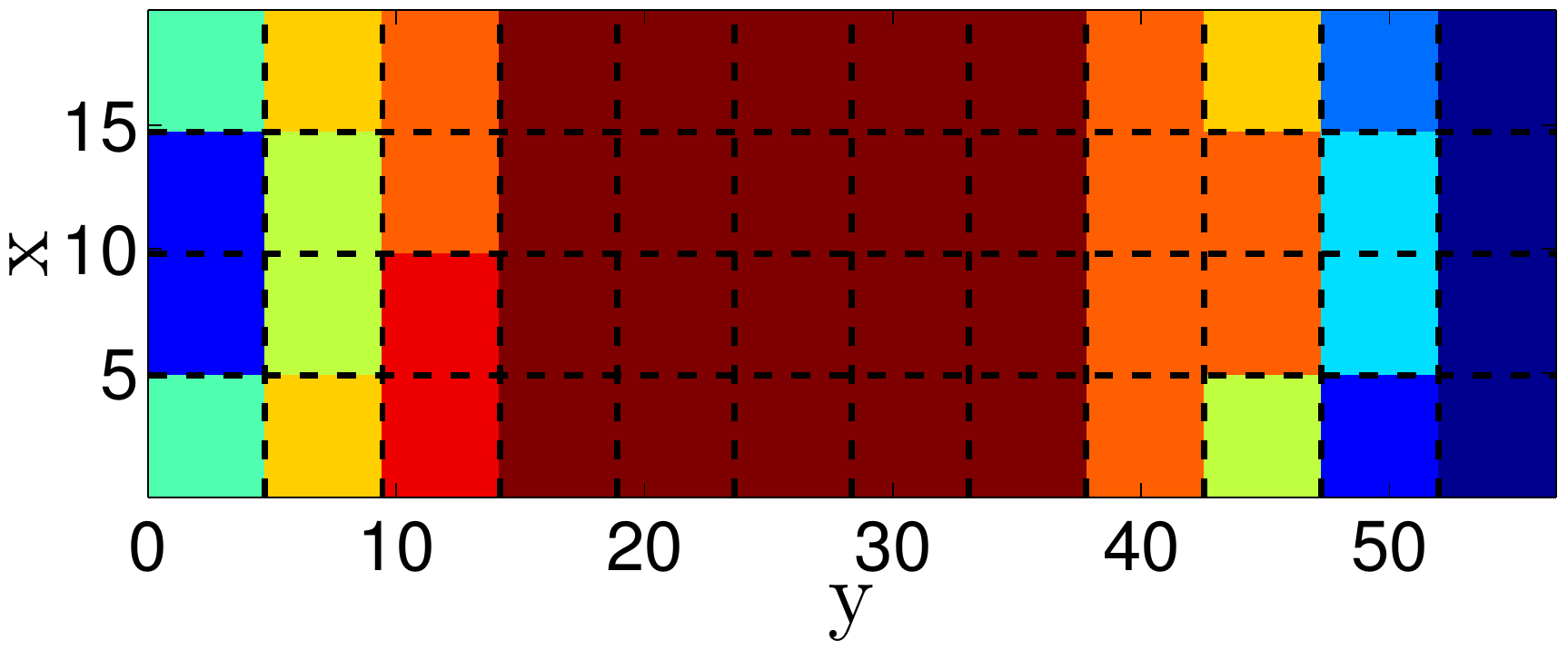}
  }
  \hfill
  \subfloat[Fixed z = 12.3966
  \label{fig:GOxybasisnu62}]{%
    \includegraphics[width=0.47\textwidth]{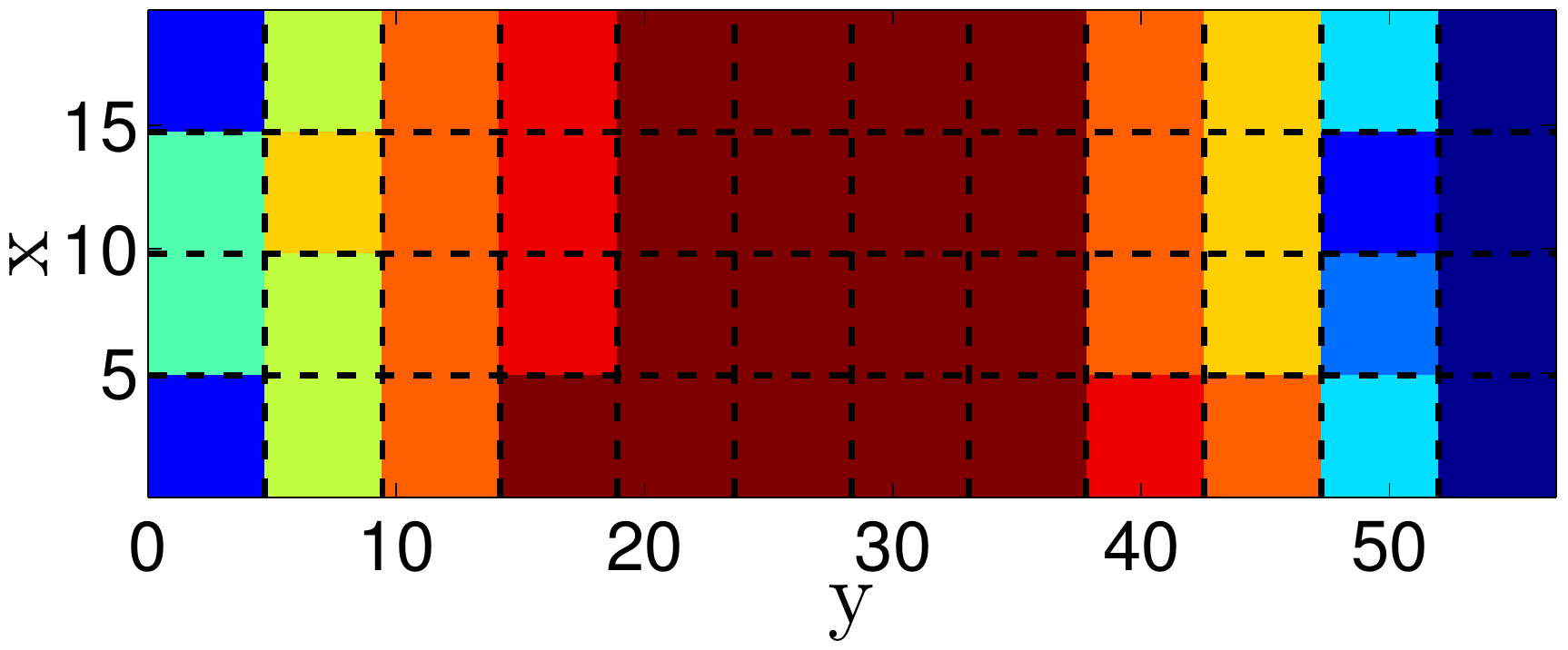}
  }

  \subfloat[Fixed x = 10.9079 \label{fig:GOyzbasisnu6}]{%
    \raisebox{-0.5\height}{\includegraphics[width=0.47\textwidth]{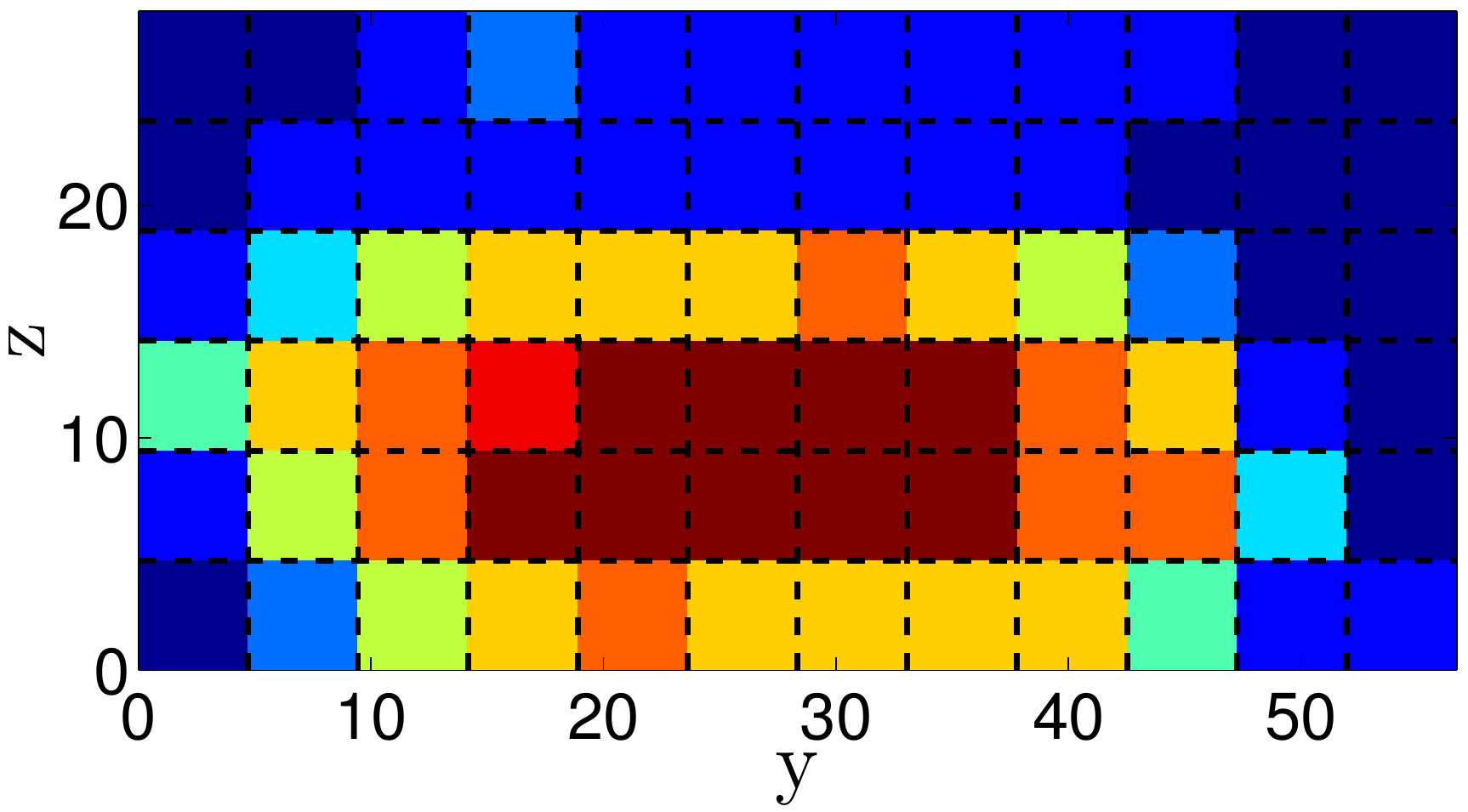}}
  }
  \hfill
  \subfloat[Fixed y = 24.0205 \label{fig:GOxzbasisnu6}]{%
    \raisebox{-0.5\height}{\includegraphics[width=0.47\textwidth]{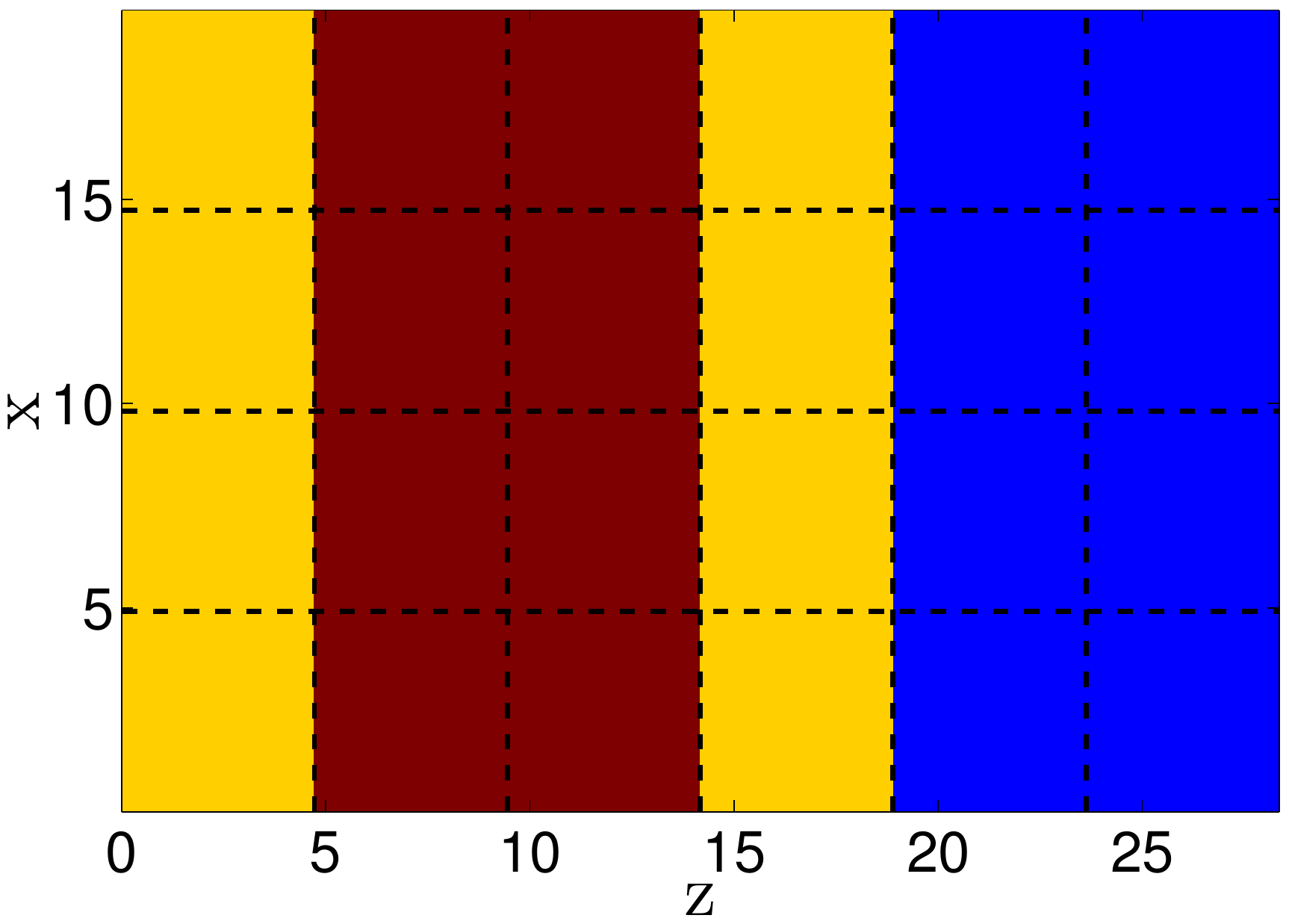}}
  }

  \vspace{-4mm}
  \subfloat{%
    {\includegraphics[width=0.5\textwidth]{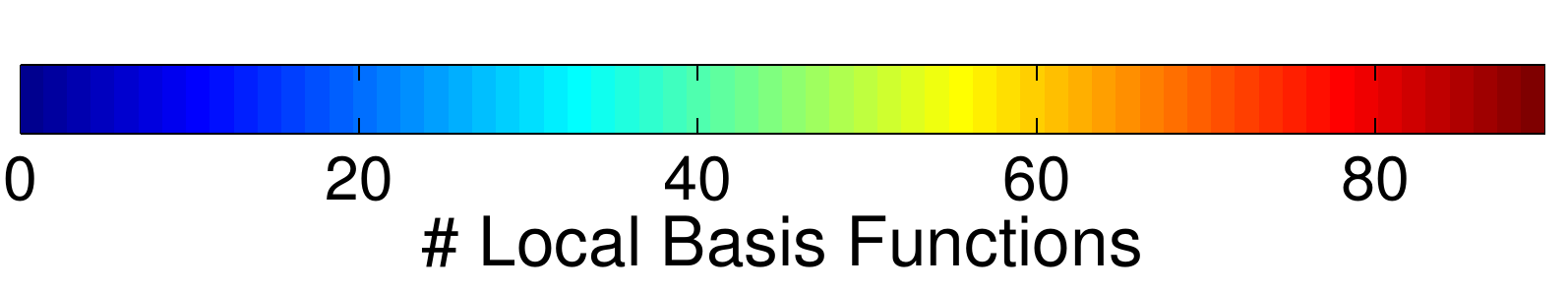}}
  }

  \caption{Distributions of adaptive local basis functions at the last step of
  non-uniform refinement in the elements containing the slices shown in Figure
\ref{fig:GOdenslices}. The distribution of basis functions closely reflects
the density.}
  \label{fig:GObasisnu6}
\end{figure}

Fig.~\ref{fig:GOerrorandtotbasis} demonstrates the accuracy and
efficiency of the non-uniform refinement procedure for the graphene oxide
calculation. Fig.~\ref{fig:GOenergyerr} shows that the difference in the
error of the total energy per atom for the two refinement schemes is
very small, and that the global estimator is highly effective in
capturing the error of the total energy. The maximum and average errors of the atomic
forces among all the atoms are given in Fig.~\ref{fig:GOforceerr} for the
uniform and non-uniform refinement schemes, which again give similar
accuracy. Fig.~\ref{fig:GOtotalbasis} shows
that by the final step of refinement,
the uniform refinement scheme uses nearly 2.5 times as many basis
functions as the non-uniform refinement scheme. Furthermore, the
non-uniform refinement scheme uses fewer basis functions at the final
step than it does at the first step, though the error is four orders of
magnitude lower. 

\begin{figure}[ht]
  \centering
	\subfloat[Error per atom of total energy (Hartree/atom) for uniform refinement (U.
	Ref.) and non-uniform
	refinement (N.U. Ref), along with the global estimator. We also
        show the individual terms of the estimator summed over all elements
        $K \in \mc{T}$; the residual $\eta_{R}^{2}$, the discontinuity
	of the gradient $\eta_{G}^2$, and the discontinuity of the function values
	$\eta_{V}^2$.
      \label{fig:GOenergyerr}]{%
    \includegraphics[width=0.47\textwidth]{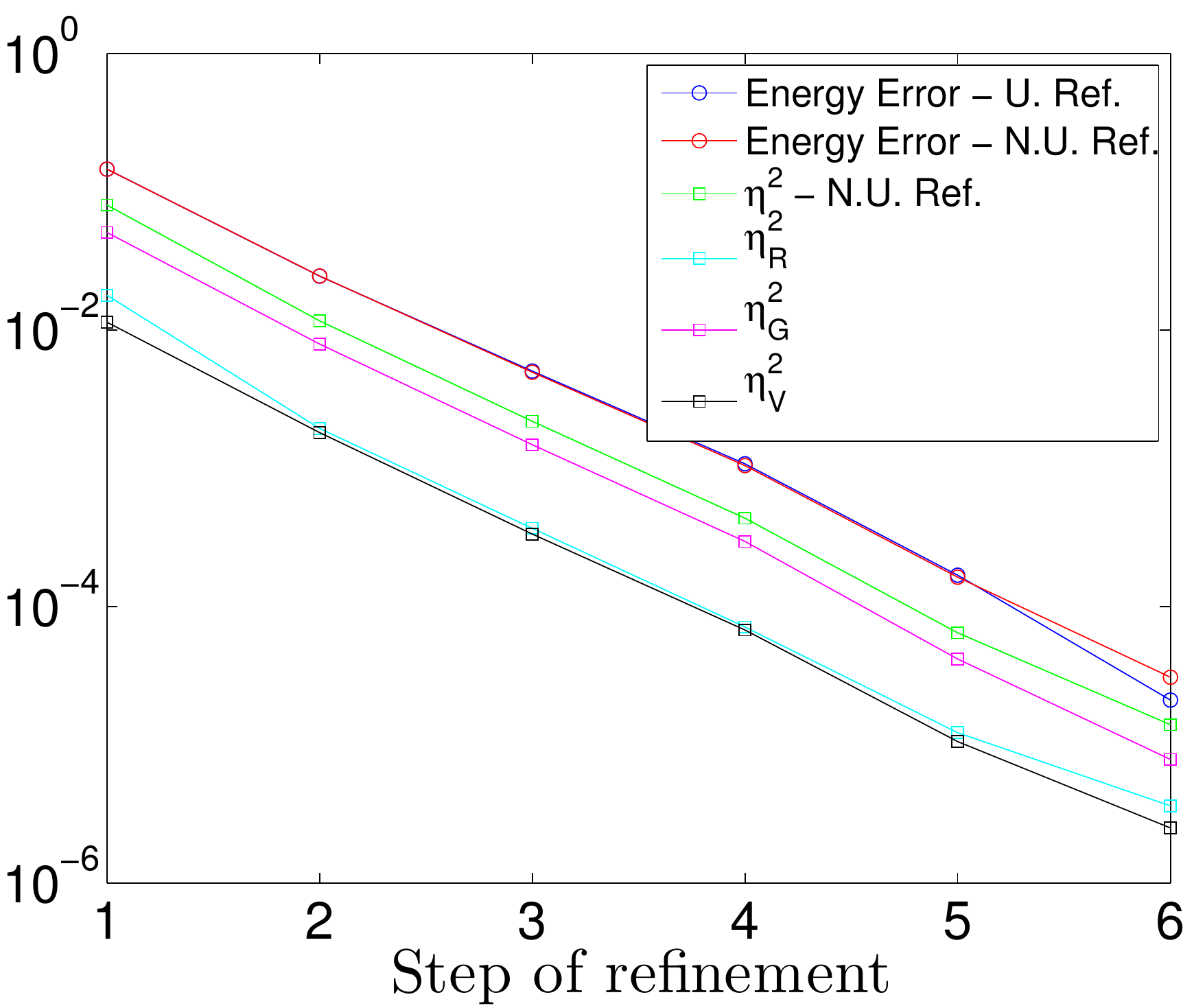}
  }
  \hfill
  \subfloat[Error of atomic force (au) for uniform and non-uniform refinement.
  \label{fig:GOforceerr}]{%
    \includegraphics[width=0.47\textwidth]{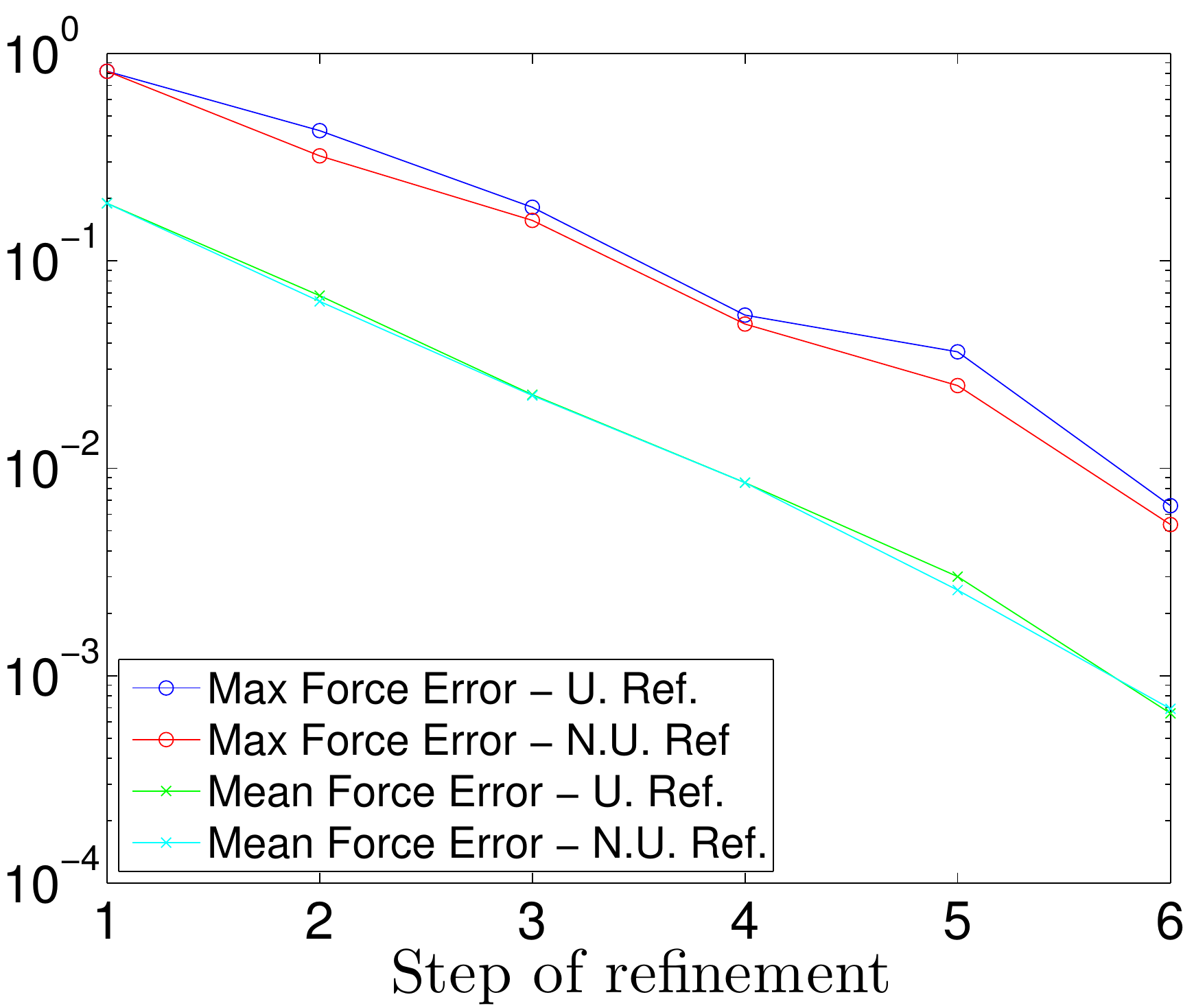}
  }

  \subfloat[Number of basis functions per atom for uniform and
  non-uniform refinement. \label{fig:GOtotalbasis}]{%
    \includegraphics[width=0.47\textwidth]{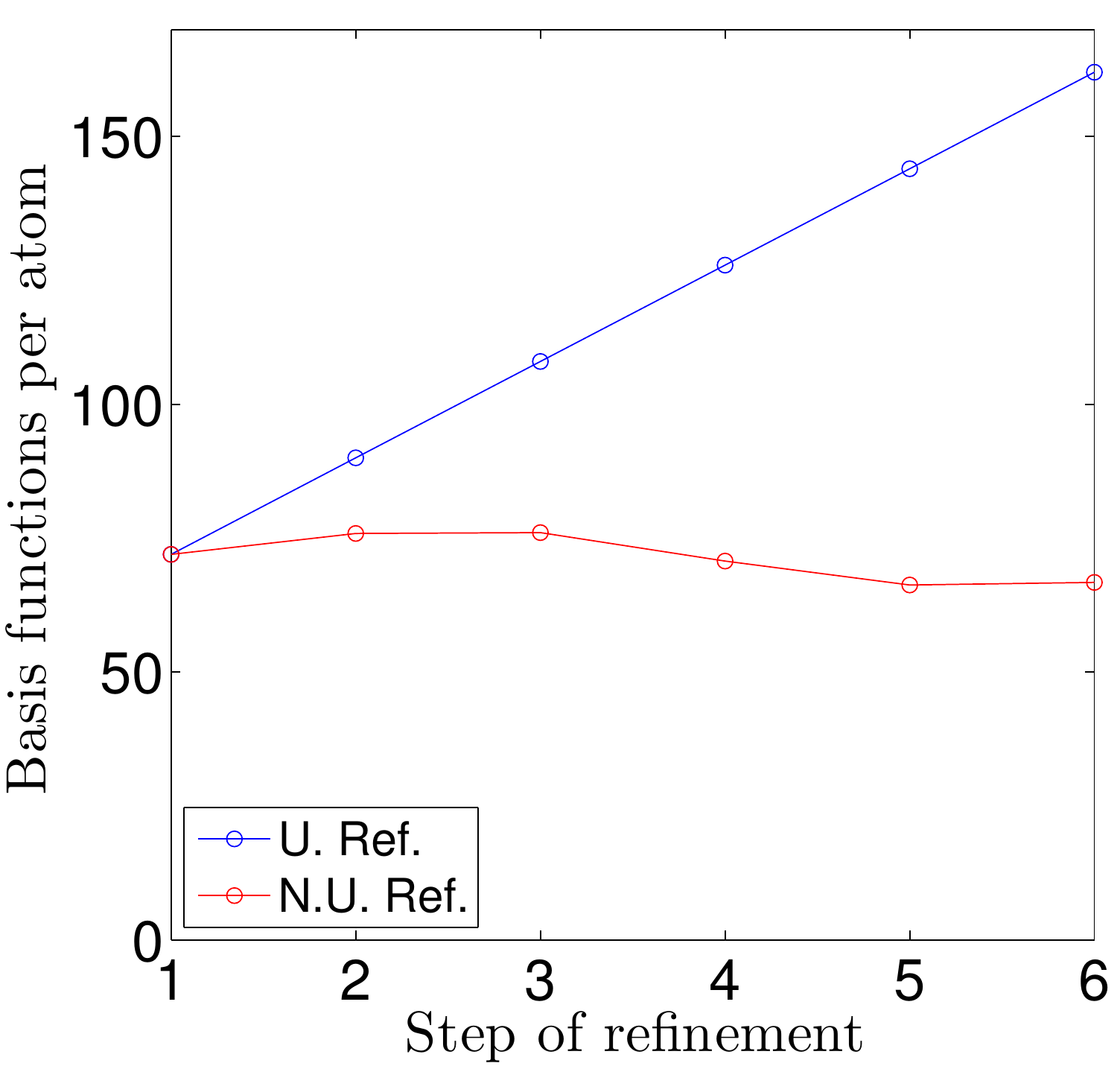}
  }

  \caption{Accuracy and savings for graphene oxide system calculations. As
  in the aluminum case, the non-uniform refinement scheme is nearly as
accurate and more efficient than the uniform scheme, and the global
estimator is asymptotically effective.}
  \label{fig:GOerrorandtotbasis}
\end{figure}

Another perspective on the effectiveness of the non-uniform refinement
strategy is given in Fig.~\ref{fig:GOlocalestimators}, in which we plot the
quintile values of the local estimator across all elements at each step
of uniform and non-uniform refinement, along with the error thresholds
$\epsilon_{\max}$ and $\epsilon_{\min}$. In the uniform refinement
scheme, both under and over-resolved elements become further refined at
each step.  In the non-uniform refinement scheme, only the under-resolved
elements are further refined; the over-resolved elements are instead
made less refined at each step, thus recovering inefficiently-allocated
resources. The dominant sources of error are similar in both cases, so
very little loss in accuracy is incurred by using a non-uniform basis
refinement scheme. 

\begin{figure}[ht]
  \centering
  \subfloat[Uniform refinement \label{fig:GOlocalestimatorsu}]{%
    \includegraphics[width=0.47\textwidth]{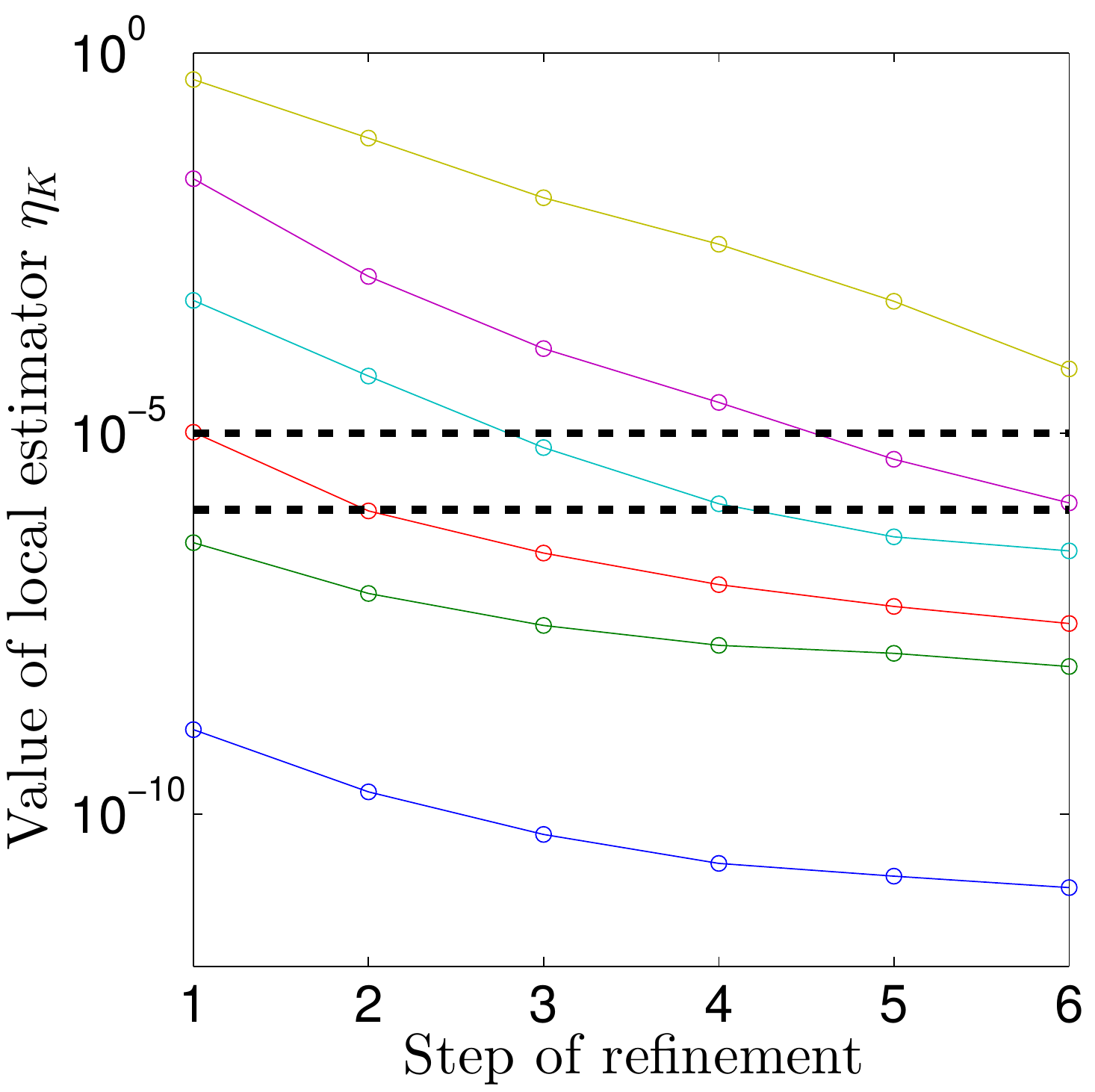}
  }
  \hfill
  \subfloat[Non-uniform refinement \label{fig:GOlocalestimatorsnu}]{%
    \includegraphics[width=0.47\textwidth]{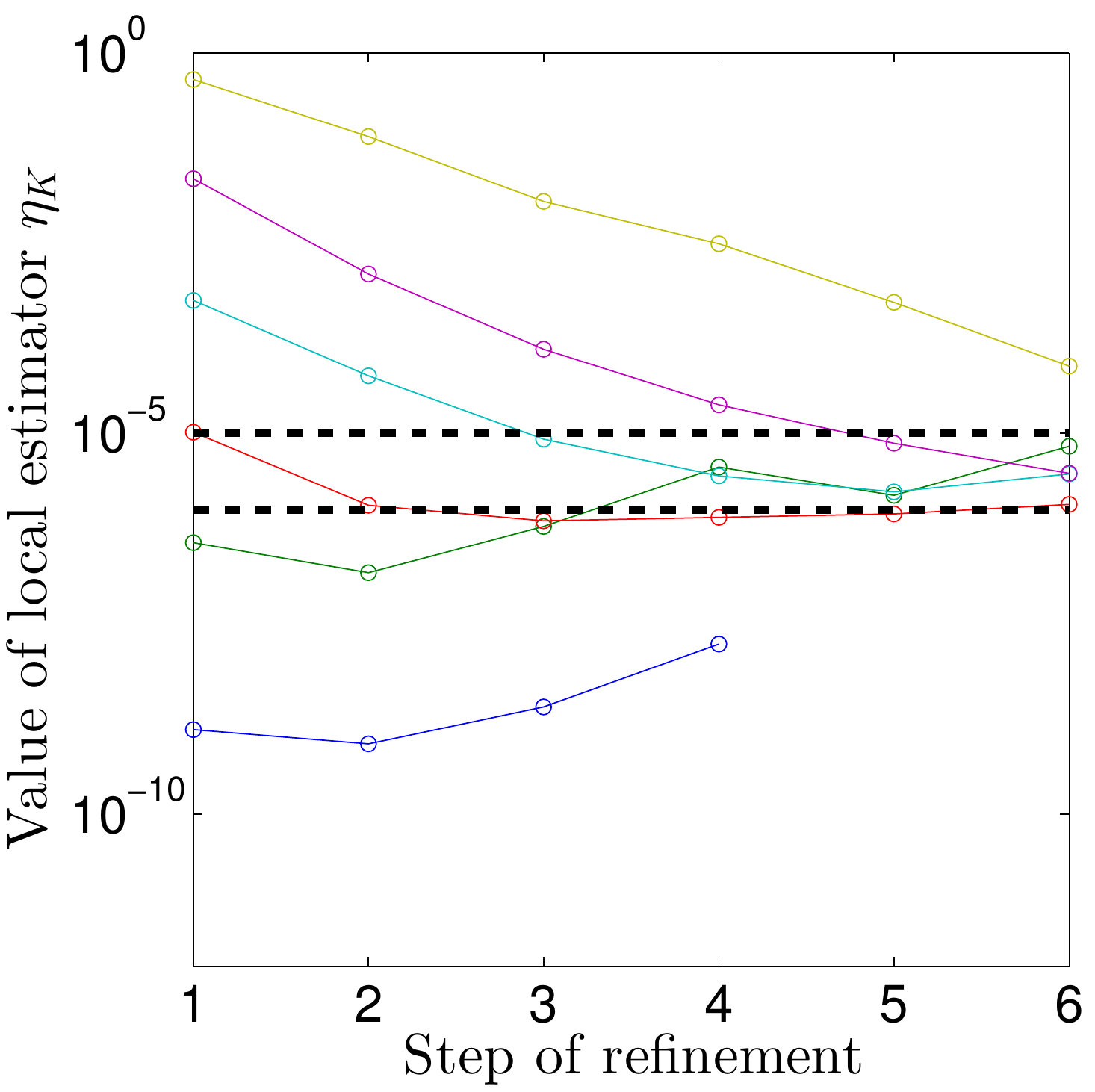}
  }

  \caption{Quintile values of local estimator for uniform and non-uniform basis refinement, along with
$\epsilon_{\max}$ and $\epsilon_{\min}$. The dominant local errors are roughly the
same in both elements, but the non-uniform refinement scheme produces
savings in the elements with small local error. Note that the minimum value of the local
estimator across all elements becomes $0$ by the fifth step of non-uniform refinement.}
  \label{fig:GOlocalestimators}
\end{figure}

Fig.~\ref{fig:GOPWComparison} compares the number of planewave basis functions
used by ABINIT with the number of adaptive local basis functions used by
DGDFT to achieve similar error for the graphene oxide example. We observe,
for example, that to achieve an error of the total energy on the order of
$10^{-5}$ Hartree per atom, ABINIT requires around $35,000$ planewave basis
functions per atom, whereas DGDFT requires approximately $67$ adaptive
local basis functions per atom when non-uniform basis refinement is used.

\begin{figure}[ht]
  \centering
  \includegraphics[width=0.47\textwidth]{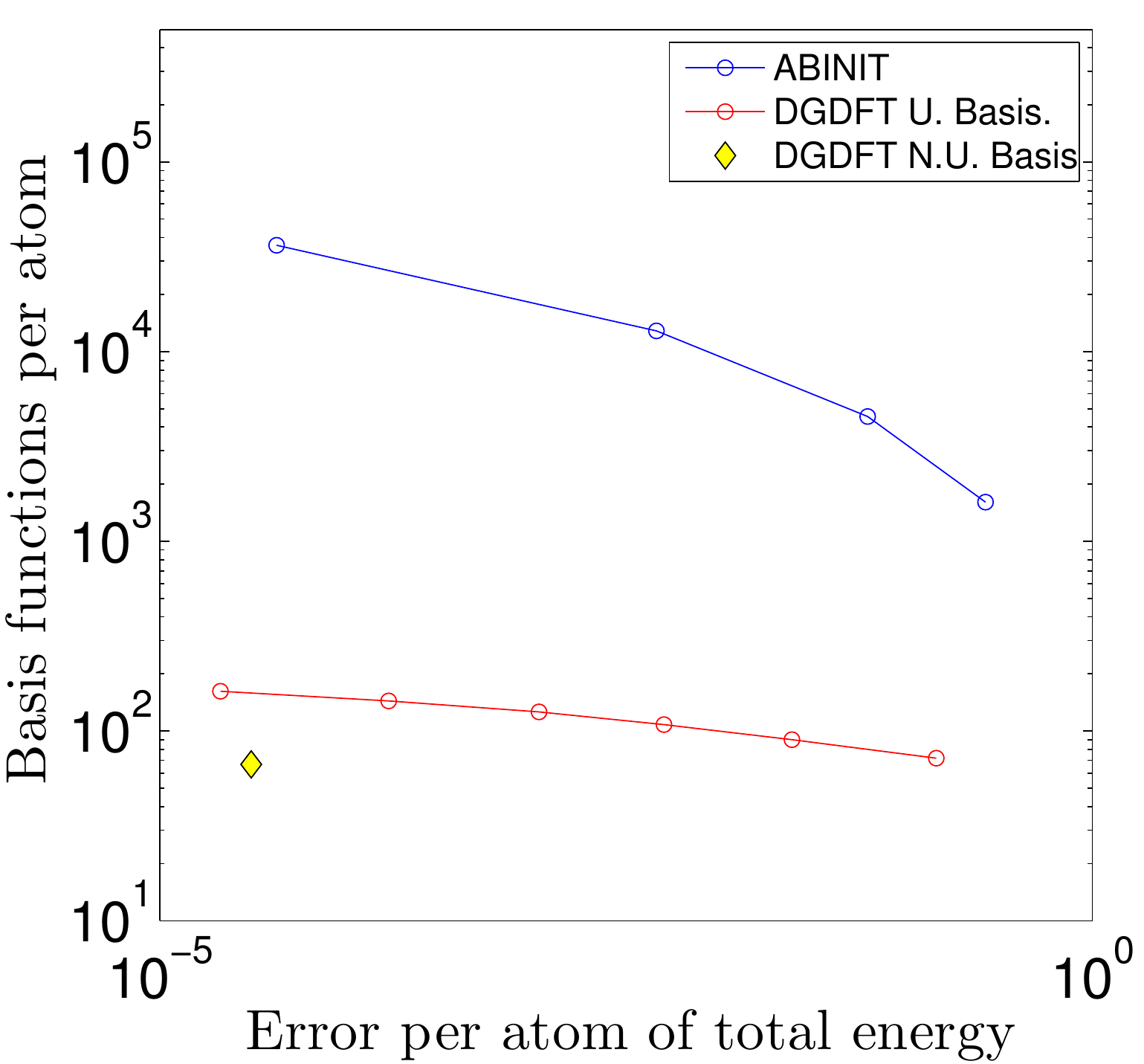}
  \caption{Number of basis functions used to achieve a given error of
    the total energy for the GO example. Non-uniform adaptive
    local basis sets obtained from DGDFT are approximately three orders of
    magnitude smaller than planewave basis sets obtained from ABINIT for
    similar error.}
  \label{fig:GOPWComparison}
\end{figure}

\section{Conclusion and future work}\label{sec:conclusion}
In this paper we develop residual-based a posteriori local error
estimates for solving KSDFT using adaptive local basis functions in a
discontinuous Galerkin framework.  Compared with standard $hp$-refinement,
the major difficulty in carrying out the analysis is that little is known
about the analytical properties of the adaptive local basis functions.
We therefore effectively need to perform an a posteriori error analysis for 
general non-polynomial basis functions. In order to proceed we postulate
a set of assumptions on the basis set under which we are
able to prove that the a posteriori error estimator is an
upper bound for the errors of both eigenvalues and eigenvectors, up to
terms which are of higher order in the context of standard $hp$-refinement.
We acknowledge that there is still difficulty in verifying the
assumptions for a specific non-polynomial basis set such as
the ALBs. We plan to carry out a numerical verification of the
assumptions for the adaptive local basis functions in the future. The
hope is that such a study can illuminate the approximation properties of
the ALB sets from an analytic point of view, and also inform the a
priori error analysis.

From a practical point of view, the results given by the local error
estimator used in DGDFT mimic the $hp$-refinement results. As a first
attempt we reinterpret $p$ as the number of adaptive local basis
functions used in each element. We consider this only as a starting
point for the study, but nevertheless, the numerical results show that
the approach of a posteriori error estimates is promising for KSDFT
studies. We demonstrate the practical use of the a posteriori error
estimator for three-dimensional KSDFT calculations for quasi-2D aluminum
surfaces and a single layer graphene oxide system in water. Besides the systems
with a vacuum region, we also plan to use the non-uniform refinement
strategy to study systems with defects and dislocations. In these cases,
the electron density and the magnitude of the estimator may not vary as
much as in the case of a large vacuum region. 

Besides the adaptive refinement, another important application of a
posteriori error estimation is to capture the error due to the finite
dimensional approximation of physical quantities such as
total energies without performing a more refined calculation, which can
be prohibitively expensive in practical KSDFT calculations.  The error
of the total energy is directly related to the error of eigenvalues.
However, this objective requires a much more precise estimator than is used
in the current formulation, and this will be our future work. 

\section*{Acknowledgments}

This work was partially supported by the Science Undergraduate Laboratory
Internship (SULI) program of Lawrence Berkeley National Laboratory (J. K.),
by the Laboratory Directed Research and Development Program of Lawrence
Berkeley National Laboratory under the U.S.  Department of Energy contract
number DE-AC02-05CH11231, and by the Scientific Discovery through Advanced
Computing (SciDAC) program funded by U.S. Department of Energy, Office of
Science, Advanced Scientific Computing Research and Basic Energy Sciences
(L. L.  and C. Y.).  We are grateful to Roberto Car and Limin Liu for
providing the atomic configuration of the graphene oxide in water. We
would also like to thank Eric Canc\`{e}s, Yvon Maday and Benjamin Stamm for
helpful discussions.

\FloatBarrier

\appendix
\section{Details for proving Theorem~\ref{thm:eigfunc} and~\ref{thm:eigval}}

\begin{lemma}
	Let $u_{i,\mc{J}}^{r}$ be defined as in Eq.~\eqref{eqn:splitu}, then
	\begin{equation}
		\normET{u_{i,\mc{J}}^{r}} \lesssim \eta_{i}.
		\label{}
	\end{equation}
	\label{lem:urbound}
\end{lemma}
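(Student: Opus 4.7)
The plan is to use the definition of the energy norm to split $\normET{u_{i,\mc{J}}^{r}}^{2}$ into a broken-gradient piece $\tfrac12 \sum_{K}\norm{\nabla u_{i,\mc{J}}^{r}}_{K}^{2}$ and a face-jump piece $\sum_{F}\alpha(J_{F})\norm{\jump{u_{i,\mc{J}}^{r}}}_{F}^{2}$, and to bound each of them by the value-jump contribution $\sum_{K}\eta_{i,V_{K}}^{2}$ of the a posteriori estimator, which is evidently dominated by $\eta_{i}^{2}$.

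The first step is to notice that since $I_{\mc{J}} u_{i,\mc{J}} \in H^{1}_{\pi}(\Omega)$, its face traces are single-valued, so $\jump{u_{i,\mc{J}}^{r}} = \jump{u_{i,\mc{J}}}$ on every interior face $F$. Consequently the face-jump part of $\normET{u_{i,\mc{J}}^{r}}^{2}$ is exactly $\sum_{F}\alpha(J_{F})\norm{\jump{u_{i,\mc{J}}}}_{F}^{2}$. Since $\gamma_{2}$ and $\alpha$ on a face are defined as maxima over the two adjacent elements, the lower bound $1 \lesssim \gamma_{2}(J_{K})\alpha(J_{K})$ in \eqref{eqn:constantCondition2} upgrades to $1 \lesssim \gamma_{2}(J_{F})\alpha(J_{F})$, which lets me insert an extra factor $\gamma_{2}(J_{F})\alpha(J_{F})$ and obtain the majorization by $\sum_{F}\gamma_{2}(J_{F})\alpha^{2}(J_{F})\norm{\jump{u_{i,\mc{J}}}}_{F}^{2}$. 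Up to the constants $\tfrac14$ and a factor of two coming from double-counting faces in the passage from $\sum_{F\in\mc{S}}$ to $\sum_{K\in\mc{T}}\sum_{F\subset\partial K}$, this is precisely $\sum_{K}\eta_{i,V_{K}}^{2}$.

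The second step, for the broken-gradient piece, is a direct invocation of the interpolation property \eqref{eqn:IJcondition1}, which immediately gives
\[
\sum_{K\in\mc{T}}\norm{\nabla\big(u_{i,\mc{J}} - I_{\mc{J}} u_{i,\mc{J}}\big)}_{K}^{2} \lesssim \sum_{F\in\mc{S}} \gamma_{2}(J_{F})\alpha^{2}(J_{F})\norm{\jump{u_{i,\mc{J}}}}_{F}^{2},
\]
i.e.\ again a bound by $\sum_{K}\eta_{i,V_{K}}^{2} \le \eta_{i}^{2}$. Adding the two contributions and taking square roots completes the argument.

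There is no serious obstacle here: Assumption \ref{asp:regularity}(2) has been designed precisely to control the conforming-interpolation error of a discontinuous function by its face jumps, and \eqref{eqn:constantCondition2} is the small extra hypothesis needed to make the ``raw'' jump penalty $\alpha(J_{F})$ in the energy norm match the weighted jump $\gamma_{2}(J_{F})\alpha^{2}(J_{F})$ appearing in $\eta_{i,V_{K}}$. The only care required is constant bookkeeping, but all relevant constants are independent of the basis-size parameters $\{J_{K}\}$ and are absorbed into the $\lesssim$ symbol.
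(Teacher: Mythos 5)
Your proposal is correct and follows essentially the same route as the paper: the interpolation property \eqref{eqn:IJcondition1} controls the broken-gradient part of $\normET{u_{i,\mc{J}}^{r}}$, and \eqref{eqn:constantCondition2} upgrades the jump-penalty part to the weighted jump term $\sum_{K}\eta_{i,V_K}^2$. Your explicit observation that $\jump{u_{i,\mc{J}}^{r}}=\jump{u_{i,\mc{J}}}$ because the conforming part $I_{\mc{J}}u_{i,\mc{J}}$ has single-valued traces is left implicit in the paper but is exactly the right justification.
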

\begin{proof}
	Using Eq.~\eqref{eqn:IJcondition1}
	\begin{equation}
		\sum_{K\in \mc{T}}\norm{\nabla u_{i,\mc{J}}^{r} }^2_{K} 
		\lesssim \sum_{F\in \mc{S}}\gamma_{2}(J_{F}) \alpha^{2}(J_F) 
		\norm{\jump{u_{i,\mc{J}}}}^{2}_{F}
		\lesssim \sum_{K\in \mc{T}} \eta_{i,V_K}^2 \le \eta_{i}^2.
		\label{}
	\end{equation}
	Using Eq.~\eqref{eqn:constantCondition2},
	\begin{equation}
		\sum_{F\in \mc{S}} \alpha(J_F) 
		\norm{\jump{u_{i,\mc{J}}}}^{2}_{F}
		\lesssim \sum_{F\in \mc{S}} \gamma_{2}(J_F) \alpha^2(J_F) 
		\norm{\jump{u_{i,\mc{J}}}}^{2}_{F}
		\lesssim \sum_{K\in \mc{T}} \eta_{i,V_K}^2 \le \eta_{i}^2.
		\label{}
	\end{equation}
	Then Lemma~\ref{lem:urbound} follows directly from the definition of
	the energy norm~\eqref{eqn:energynorm}.
\end{proof}

\begin{definition}[Split of $A_{\mc{J}}$]
	The bilinear form $A_{\mc{J}}$ in Eq.~\eqref{eqn:bilinearDG} 
	can be split into two parts
	\begin{equation}
		A_{\mc{J}}(u,v) = D_{\mc{J}}(u,v) + K_{\mc{J}}(u,v).
		\label{}
	\end{equation}
	Here
	\begin{equation}
		D_{\mc{J}}(u,v) = \frac{1}{2}\average{\nabla u,\nabla v}_{\mc{T}}
                + \sum_{F\in \mc{S}} \alpha(J_F) \average{\jump{u},\jump{v}}_{F},
		\label{}
	\end{equation}
	and
	\begin{equation}
		K_{\mc{J}}(u,v) = - \frac{1}{2} \average{\mean{\nabla u},\jump{v}}_{\mc{S}}
		- \frac{1}{2} \average{\mean{\nabla v},\jump{u}}_{\mc{S}}.
		\label{}
	\end{equation}
	Formally, it is clear that
	\begin{equation}
		A(u,v) = D_{\mc{J}}(u,v),\quad \forall u,v\in H^{1}_{\pi}(\Omega).
		\label{}
	\end{equation}
\end{definition}

\begin{lemma}
  If Assumption~\ref{asp:regularity} holds, 
	let $(\varepsilon_{i,\mc{J}},u_{i,\mc{J}})$ be a computed eigenpair
	corresponding to~\eqref{eqn:weakDGeig}, and $(\varepsilon_{i},u_{i})$
	an eigenpair corresponding to~\eqref{eqn:DGeig}.
	For any $v\in H^{1}_{\pi}(\Omega)$, let $v_{\mc{J}}\in
	\mc{V}_{\mc{J}}(\mc{T})$ satisfy~\eqref{eqn:gamma1}
	and~\eqref{eqn:gamma2}. Then we have
	\begin{equation}
		\begin{split}
                  & \average{\varepsilon_{i} u_{i}-V_{\eff}u_i,v-v_{\mc{J}}}_{\mc{T}} -
		D_{\mc{J}}(u_{i,\mc{J}},v-v_{\mc{J}}) +
		K_{\mc{J}}(u_{i,\mc{J}}, v_{\mc{J}})  \\
    &~~~\lesssim \left( \eta_{i} + \sqrt{\gamma_{1,\mc{J}}}\xi_{i} \right) \normET{v}.
		\end{split}
		\label{eqn:ucboundaux}
	\end{equation}
	\label{lem:ucboundaux}
\end{lemma}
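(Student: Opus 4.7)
The plan is to reshape the left-hand side so that its components match, one by one, the pieces $\eta_{i,R_K}$, $\eta_{i,G_K}$, $\eta_{i,V_K}$ of the local estimator and a correction controlled by $\xi_i$, and then close with Cauchy--Schwarz together with Assumption~\ref{asp:regularity}.

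First I would perform element-wise integration by parts on $D_{\mc{J}}(u_{i,\mc{J}},v-v_{\mc{J}})$. On each $K$, $\tfrac{1}{2}\average{\nabla u_{i,\mc{J}},\nabla(v-v_{\mc{J}})}_K = -\tfrac{1}{2}\average{\Delta u_{i,\mc{J}},v-v_{\mc{J}}}_K+\tfrac{1}{2}\int_{\partial K}(v-v_{\mc{J}})\,\partial_{\vn}u_{i,\mc{J}}\,\ud s$. Summing over $K$ and rewriting boundary integrals as face sums (all faces are interior by periodicity) produces $\tfrac{1}{2}\average{\jump{\nabla u_{i,\mc{J}}},\mean{v-v_{\mc{J}}}}_{\mc{S}} + \tfrac{1}{2}\average{\mean{\nabla u_{i,\mc{J}}},\jump{v-v_{\mc{J}}}}_{\mc{S}}$. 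Since $v\in H^{1}_{\pi}(\Omega)$ has $\jump{v}=0$, we have $\jump{v-v_{\mc{J}}}=-\jump{v_{\mc{J}}}$, and the $\mean{\nabla u_{i,\mc{J}}}\!\cdot\!\jump{v_{\mc{J}}}$ contribution cancels exactly against the first term in $K_{\mc{J}}(u_{i,\mc{J}},v_{\mc{J}})$. In the volume part I would then add and subtract $\varepsilon_{i,\mc{J}}u_{i,\mc{J}}-V_{\eff}u_{i,\mc{J}}$ inside $\average{\varepsilon_i u_i-V_{\eff}u_i,v-v_{\mc{J}}}_{\mc{T}}+\tfrac{1}{2}\average{\Delta u_{i,\mc{J}},v-v_{\mc{J}}}_{\mc{T}}$, producing exactly
\[
-\average{(-\tfrac{1}{2}\Delta+V_{\eff}-\varepsilon_{i,\mc{J}})u_{i,\mc{J}},\,v-v_{\mc{J}}}_{\mc{T}} + \average{(\varepsilon_i u_i-\varepsilon_{i,\mc{J}}u_{i,\mc{J}})-V_{\eff}(u_i-u_{i,\mc{J}}),\,v-v_{\mc{J}}}_{\mc{T}}.
\]
The first piece is the local residual whose $L^2$ norm on $K$ is $\eta_{i,R_K}/\sqrt{\gamma_1(J_K)}$; the second is a $\xi_i$-type correction.

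Next I would bound each surviving piece by Cauchy--Schwarz. For the two volume terms, \eqref{eqn:gamma1} trades $\norm{v-v_{\mc{J}}}_K$ for $\sqrt{\gamma_1(J_K)}\norm{\nabla v}_K$, yielding $\eta_i\normET{v}$ and $\sqrt{\gamma_{1,\mc{J}}}\,\xi_i\normET{v}$ respectively, after noting that $\normET{v}^2=\tfrac{1}{2}\norm{\nabla v}_{\mc{T}}^2$ because $\jump{v}=0$. For the face term $\tfrac{1}{2}\average{\jump{\nabla u_{i,\mc{J}}},\mean{v-v_{\mc{J}}}}_{\mc{S}}$, face-wise Cauchy--Schwarz with weight $\gamma_2(J_F)^{\pm 1/2}$, combined with the trace bound $\norm{\mean{v-v_{\mc{J}}}}_F^2\lesssim \gamma_2(J_F)(\norm{\nabla v}_{K^+}^2+\norm{\nabla v}_{K^-}^2)$ coming from \eqref{eqn:gamma2}, gives $\eta_{i,G}\,\normET{v}$. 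For $\sum_F\alpha(J_F)\average{\jump{u_{i,\mc{J}}},\jump{v_{\mc{J}}}}_F$, I substitute $\jump{v_{\mc{J}}}=-\jump{v-v_{\mc{J}}}$, distribute the weight as $(\alpha^2\gamma_2)^{1/2}\cdot(\gamma_2)^{-1/2}$, and use \eqref{eqn:gamma2} on the second factor, yielding $\eta_{i,V}\,\normET{v}$.

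The main obstacle will be handling the last face term $\tfrac{1}{2}\average{\mean{\nabla v_{\mc{J}}},\jump{u_{i,\mc{J}}}}_{\mc{S}}$, because the natural Cauchy--Schwarz pairing produces $\norm{\mean{\nabla v_{\mc{J}}}}_F$, which is not directly controlled by $\normET{v}$. Here I would pair with weight $(\gamma_2\alpha^2)^{1/2}\cdot(\gamma_2\alpha^2)^{-1/2}$ so that the $u_{i,\mc{J}}$ factor becomes $\eta_{i,V}$, and then apply the inverse trace inequality \eqref{eqn:inversetrace} to get $\norm{\mean{\nabla v_{\mc{J}}}}_F^2\lesssim \gamma_2(J_F)\alpha^2(J_F)(\norm{\nabla v_{\mc{J}}}_{K^+}^2+\norm{\nabla v_{\mc{J}}}_{K^-}^2)$, which cancels the weight $\gamma_2^{-1}\alpha^{-2}$ perfectly and leaves $\norm{\nabla v_{\mc{J}}}_{\mc{T}}$. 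Finally, \eqref{eqn:gamma3} gives $\norm{\nabla v_{\mc{J}}}_{\mc{T}}\lesssim \norm{\nabla v}_{\mc{T}}\lesssim \normET{v}$, so this term too is bounded by $\eta_i\,\normET{v}$. Summing all five contributions yields \eqref{eqn:ucboundaux}. The bookkeeping of cancellations after integration by parts and the careful choice of weights in the last face bound are the only non-routine steps; everything else is a mechanical application of Cauchy--Schwarz and Assumption~\ref{asp:regularity}.
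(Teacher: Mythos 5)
Your proposal is correct and follows essentially the same route as the paper's proof: element-wise integration by parts on $D_{\mc{J}}$, the face identity splitting the boundary terms into $\mean{\nabla u_{i,\mc{J}}}\cdot\jump{\cdot}$ and $\jump{\nabla u_{i,\mc{J}}}\cdot\mean{\cdot}$ parts, the cancellation via $\jump{v-v_{\mc{J}}}=-\jump{v_{\mc{J}}}$, the add-and-subtract of $\varepsilon_{i,\mc{J}}u_{i,\mc{J}}-V_{\eff}u_{i,\mc{J}}$ to isolate the residual from the $\xi_i$ correction, and the weighted Cauchy--Schwarz bounds for each resulting term (including the inverse trace inequality plus \eqref{eqn:gamma3} for the $\mean{\nabla v_{\mc{J}}}$ face term). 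This matches the paper's decomposition into $T_1,\dots,T_4$ and the bounds \eqref{eqn:T1}--\eqref{eqn:T4}.
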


\begin{proof}
For brevity, set 
\begin{equation}
	T = \average{\varepsilon_{i} u_{i} - V_{\eff} u_{i} ,v-v_{\mc{J}}}_{\mc{T}} -
		D_{\mc{J}}(u_{i,\mc{J}},v-v_{\mc{J}}) +
		K_{\mc{J}}(u_{i,\mc{J}}, v_{\mc{J}})  
		\label{}
\end{equation}
Integrating by parts gives
\begin{equation}
	\begin{split}
          D_{\mc{J}}(u_{i,\mc{J}}, v-v_{\mc{J}}) = &-\sum_{K\in \mc{T}} \frac12 \average{\Delta
	u_{i,\mc{J}},v-v_{\mc{J}}}_{K}
        + \sum_{F\in \mc{S}} \alpha(J_{F}) 
	\average{\jump{u_{i,\mc{J}}},\jump{v-v_{\mc{J}}}}_{F} \\ 
        &+ \sum_{K\in \mc{T}} \frac12 
        \average{\nabla u_{i,\mc{J}}\cdot \vn_{K}, v-v_{\mc{J}}}_{\partial K}
	\end{split}
	\label{}
\end{equation}
Using the identity
\begin{equation}
  \begin{split}
	\sum_{K\in \mc{T}} 
  \average{\nabla u_{i,\mc{J}}\cdot \vn_{K}, v-v_{\mc{J}}}_{\partial K}
        &= \sum_{F\in \mc{S}} \average{\mean{\nabla
	u_{i,\mc{J}}}, \jump{v-v_{\mc{J}}}}_{F} \\
        &+ \sum_{F\in \mc{S}} \average{\jump{\nabla
	u_{i,\mc{J}}}, \mean{v-v_{\mc{J}}}}_{F},
	\label{}
      \end{split}
\end{equation}

we have
\begin{equation}
	\begin{split}
		T =& \sum_{K\in\mc{T}} \average{\varepsilon_{i}u_i + \frac12 \Delta
		u_{i,\mc{J}} - V_{\eff} u_{i}, v-v_{\mc{J}}}_{K} 
    -\sum_{F\in \mc{S}}\alpha(J_{F})
		\average{\jump{u_{i,\mc{J}}},\jump{v-v_{\mc{J}}}}_{F}\\
		&-\sum_{F\in \mc{S}} \frac12 \average{\mean{\nabla
		u_{i,\mc{J}}}, \jump{v-v_{\mc{J}}}}_{F}
		- \sum_{F\in \mc{S}} \frac12 \average{\jump{\nabla
		u_{i,\mc{J}}}, \mean{v-v_{\mc{J}}}}_{F}\\
		&-\sum_{F\in \mc{S}} \frac12 \average{\mean{\nabla
		u_{i,\mc{J}}}, \jump{v_{\mc{J}}}}_{F}
		- \sum_{F\in \mc{S}} \frac12 \average{\mean{\nabla
		v_{\mc{J}}}, \jump{u_{i,\mc{J}}}}_{F}\\
		=& \sum_{K\in\mc{T}} \average{\varepsilon_{i}u_i + \frac12 \Delta
		u_{i,\mc{J}} - V_{\eff} u_{i}, v-v_{\mc{J}}}_{K} 
    -\sum_{F\in \mc{S}}\alpha(J_{F})
		\average{\jump{u_{i,\mc{J}}},\jump{v-v_{\mc{J}}}}_{F}\\
		&- \sum_{F\in \mc{S}} \frac12 \average{\jump{\nabla
		u_{i,\mc{J}}}, \mean{v-v_{\mc{J}}}}_{F}
		-\sum_{F\in \mc{S}} \frac12 \average{\mean{\nabla
		v_{\mc{J}}}, \jump{u_{i,\mc{J}}}}_{F}\\
		\equiv & T_1 + T_2 + T_3 + T_4.
	\end{split}
	\label{eqn:Taux}
\end{equation}
Here the definition of $T_{1}$, $T_{2}$, $T_{3}$, and $T_{4}$ respects the order of the
four terms in the second to last equality of Eq.~\eqref{eqn:Taux}.  We shall proceed to prove that each
$T_{1},T_{2},T_{3},T_{4}$ is bounded by the right hand side of
Eq.~\eqref{eqn:ucboundaux}.

For $T_1$, using the Cauchy-Schwarz inequality, we have
\begin{equation}
	\begin{split}
		T_{1} =& \sum_{K\in\mc{T}}
		\average{\varepsilon_{i,\mc{J}}u_{i,\mc{J}} + \frac12 \Delta
		u_{i,\mc{J}} - V_{\eff} u_{i,\mc{J}}, v-v_{\mc{J}}}_{K} \\
		&+ \sum_{K\in\mc{T}} \average{\varepsilon_{i}u_i -
		\varepsilon_{i,\mc{J}} u_{i,\mc{J}} - V_{\eff} u_{i} + V_{\eff}
		u_{i,\mc{J}}, v-v_{\mc{J}}}_{K} \\
		\le & \sum_{K\in\mc{T}} \norm{\varepsilon_{i,\mc{J}}u_{i,\mc{J}} + \frac12 \Delta
		u_{i,\mc{J}} - V_{\eff} u_{i,\mc{J}}}_{K}
		\norm{v-v_{\mc{J}}}_{K}\\
		&+ \sum_{K\in\mc{T}} \norm{\varepsilon_{i}u_i -
		\varepsilon_{i,\mc{J}} u_{i,\mc{J}} - V_{\eff} u_{i} + V_{\eff}
		u_{i,\mc{J}}}_K \norm{v-v_{\mc{J}}}_{K}\\
    = & \sum_{K\in\mc{T}} \left( \sqrt{\gamma_{1}(J_K)}\norm{\varepsilon_{i,\mc{J}}u_{i,\mc{J}} + \frac12 \Delta u_{i,\mc{J}} - V_{\eff} u_{i,\mc{J}}}_{K}\right)
    \left(\frac{1}{\sqrt{\gamma_{1}(J_K)}}\norm{v-v_{\mc{J}}}_{K}\right)\\
    &+ \sum_{K\in\mc{T}} \left( \sqrt{\gamma_{1}(J_K)} \norm{\varepsilon_{i}u_i -
		\varepsilon_{i,\mc{J}} u_{i,\mc{J}} - V_{\eff} u_{i} + V_{\eff}
    u_{i,\mc{J}}}_K \right) \left(\frac{1}{\sqrt{\gamma_{1}(J_K)}}\norm{v-v_{\mc{J}}}_{K}\right)
	\end{split}
	\label{}
\end{equation}

Using Eq.~\eqref{eqn:gamma1}, we obtain
\begin{equation}
	\begin{split}
		T_{1} \lesssim & \sum_{K\in \mc{T}} \eta_{i,R_K} \norm{\nabla
    v}_{K} + \sqrt{\gamma_{1,\mc{J}}} \sum_{K\in \mc{T}} \xi_{i,K} \norm{\nabla
		v}_{K} \\
    \lesssim & \left(\eta_{i,R} + \sqrt{\gamma_{1,\mc{J}}} \xi_{i}\right) \normET{v}.
	\end{split}
	\label{eqn:T1}
\end{equation}

For $T_2$ using Eq.~\eqref{eqn:gamma2} we obtain
\begin{equation}
	\begin{split}
	T_{2}\lesssim& \left( \sum_{F\in\mc{S}} \gamma_{2}(J_F) \alpha^{2}(J_F) 
	\norm{\jump{u_{i,\mc{J}}}}^{2}_{F}\right)^{\frac12}
	\left( \sum_{F\in\mc{S}} \frac{1}{\gamma_{2}(J_F)} 
	\norm{\jump{v-v_{\mc{J}}}}^2_{F}\right)^{\frac12}\\
	\lesssim & \eta_{i,V} \left( \sum_{K\in\mc{T}} 
	\frac{1}{\gamma_{2}(J_K)} 
	\norm{v-v_{\mc{J}}}^2_{\partial K} \right)^{\frac12}\\
	\lesssim & \eta_{i,V} \normET{v}.
	\end{split}
	\label{eqn:T2}
\end{equation}
Similarly for $T_{3}$ we have
\begin{equation}
	\begin{split}
		T_{3} \lesssim & \left( \sum_{F\in\mc{S}} \gamma_{2}(J_F)
		\norm{\jump{\nabla u_{i,\mc{J}}}}^2_{F}\right)^{\frac12}
		\left( \sum_{F\in\mc{S}} \frac{1}{\gamma_{2}(J_F)} 
		\norm{\mean{v-v_{\mc{J}}}}^2_{F}\right)^{\frac12}\\
		\lesssim & \eta_{i,G} \left( \sum_{K\in\mc{T}} 
		\frac{1}{\gamma_{2}(J_K)} 
		\norm{v-v_{\mc{J}}}^2_{\partial K} \right)^{\frac12}\\
		\lesssim& \eta_{i,G} \normET{v}.
	\end{split}
	\label{eqn:T3}
\end{equation}

Using the Cauchy-Schwarz inequality and the inverse trace
inequality~\eqref{eqn:inversetrace}, we have
\begin{equation}
	\begin{split}
		T_{4} \lesssim & \left( \sum_{F\in\mc{S}} \gamma_{2}(J_F)
		\alpha^{2}(J_F)
		\norm{\jump{u_{i,\mc{J}}}}^2_{F}\right)^{\frac12}
		\left( \sum_{F\in\mc{S}} 
    \frac{1}{\gamma_{2}(J_F) \alpha^{2}(J_F)} \norm{\mean{\nabla v_{\mc{J}}}}^2_{F}\right)^{\frac12}
		\\
		\lesssim& \eta_{i,V} \left( \sum_{K\in \mc{T}}\norm{\nabla v_{\mc{J}}}^2_{K} \right)^{\frac12}
	\end{split}
	\label{}
\end{equation}

From Eq.~\eqref{eqn:gamma3} we have
\begin{equation}
	\sum_{K\in \mc{T}} \norm{\nabla v_{\mc{J}}}^2_{K} \le 
	\sum_{K\in \mc{T}} \norm{\nabla (v-v_{\mc{J}})}^2_{K} +
	\sum_{K\in \mc{T}} \norm{\nabla v}^{2}_{K} 
	\lesssim \normET{v}^2,
	\label{}
\end{equation}
and therefore
\begin{equation}
	T_{4} \lesssim \eta_{i,V}\normET{v}.
	\label{eqn:T4}
\end{equation}
Combining Eqs.~\eqref{eqn:T1},~\eqref{eqn:T2},~\eqref{eqn:T3}
and~\eqref{eqn:T4} completes the proof of the lemma.
\end{proof}

\begin{lemma}
  If Assumption~\ref{asp:regularity} holds, 
	let $(\varepsilon_{i,\mc{J}},u_{i,\mc{J}})$ be a computed eigenpair
	corresponding to~\eqref{eqn:weakDGeig}, and $(\varepsilon_{i},u_{i})$
	an eigenpair corresponding to~\eqref{eqn:DGeig}. Then for
	$u_{i,\mc{J}}^{c}=I_{\mc{J}} u_{i,\mc{J}}$ we have
	\begin{equation}
    \normET{u_{i}-u_{i,\mc{J}}^{c}} \lesssim \eta_{i} + \left( 1 +
    \sqrt{\gamma_{1,\mc{J}}} \right)\xi_{i}.
		\label{}
	\end{equation}
	\label{lem:ucbound}
\end{lemma}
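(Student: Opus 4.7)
The plan is to set $v := u_i - u_{i,\mc{J}}^c$. Since $u_{i,\mc{J}}^c = I_{\mc{J}} u_{i,\mc{J}} \in H^1_\pi(\Omega)$, the function $v$ is conforming, so $\jump{v}=0$ and $\normET{v}^2 = \tfrac{1}{2}\norm{\nabla v}^2_\Omega$. Testing the continuous weak form \eqref{eqn:weakbase} against $v$ gives $\tfrac{1}{2}\average{\nabla u_i,\nabla v}_\Omega = \average{\varepsilon_i u_i - V_{\eff} u_i, v}_\Omega$. Decomposing $u_{i,\mc{J}}^c = u_{i,\mc{J}} - u_{i,\mc{J}}^r$ and noting that $D_{\mc{J}}(u_{i,\mc{J}},v) = \tfrac{1}{2}\average{\nabla u_{i,\mc{J}},\nabla v}_{\mc{T}}$ (since the face-penalty term in $D_{\mc{J}}$ vanishes once $\jump{v}=0$), these identities combine into
\[ \normET{v}^2 = \average{\varepsilon_i u_i - V_{\eff}u_i, v}_{\mc{T}} - D_{\mc{J}}(u_{i,\mc{J}}, v) + \tfrac{1}{2}\average{\nabla u_{i,\mc{J}}^r, \nabla v}_{\mc{T}}. \]

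Introducing a companion $v_{\mc{J}} \in \mc{V}_{\mc{J}}(\mc{T})$ approximating $v$ in the sense of \eqref{eqn:gamma1}--\eqref{eqn:gamma3} and inserting $\pm v_{\mc{J}}$ together with $\pm K_{\mc{J}}(u_{i,\mc{J}},v_{\mc{J}})$, I split the right-hand side as $A + B + C$. Here $A$ is exactly the quantity controlled by Lemma~\ref{lem:ucboundaux}, so $|A| \lesssim (\eta_i + \sqrt{\gamma_{1,\mc{J}}}\xi_i)\normET{v}$; $C = \tfrac{1}{2}\average{\nabla u_{i,\mc{J}}^r, \nabla v}_{\mc{T}}$ is handled by Cauchy--Schwarz and Lemma~\ref{lem:urbound}, giving $|C| \lesssim \eta_i\normET{v}$; and $B = \average{\varepsilon_i u_i - V_{\eff}u_i, v_{\mc{J}}}_{\mc{T}} - A_{\mc{J}}(u_{i,\mc{J}}, v_{\mc{J}})$. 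Substituting the DG weak form \eqref{eqn:weakDGeig} with test $v_{\mc{J}}$ collapses $B$ into $\average{\varepsilon_i u_i - \varepsilon_{i,\mc{J}} u_{i,\mc{J}}, v_{\mc{J}}}_{\mc{T}} - \average{V_{\eff}(u_i-u_{i,\mc{J}}), v_{\mc{J}}}_{\mc{T}}$; elementwise Cauchy--Schwarz together with the definition of $\xi_i$ then produces $|B| \le \xi_i \norm{v_{\mc{J}}}_{\mc{T}}$.

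The hard part is bounding $\norm{v_{\mc{J}}}_{\mc{T}}$ by a constant multiple of $(1+\sqrt{\gamma_{1,\mc{J}}})\normET{v}$. The triangle inequality and \eqref{eqn:gamma1} give $\norm{v_{\mc{J}}}_{\mc{T}} \le \norm{v-v_{\mc{J}}}_{\mc{T}} + \norm{v}_\Omega \lesssim \sqrt{\gamma_{1,\mc{J}}}\normET{v} + \norm{v}_\Omega$, so the delicate piece is $\norm{v}_\Omega$. A naive Poincar\'e inequality on periodic $H^1$ fails on the constant mode, so I would instead use the coercivity \eqref{eqn:Vcoerciveextended} of the shifted $V_{\eff}$ to get $\norm{v}^2_\Omega \lesssim \average{V_{\eff}v, v}_\Omega$, expand $v = (u_i - u_{i,\mc{J}}) + u_{i,\mc{J}}^r$ in one factor, use the positive lower bound on $V_{\eff}$ and the definition of $\xi_i$ to bound $\norm{u_i - u_{i,\mc{J}}}_{\mc{T}} \lesssim \xi_i$, and control $\norm{u_{i,\mc{J}}^r}_{\mc{T}}$ via a broken Poincar\'e-type argument fed by Lemma~\ref{lem:urbound} (exploiting that $\jump{u_{i,\mc{J}}^r} = \jump{u_{i,\mc{J}}}$ and the mean-preserving character of $I_{\mc{J}}$). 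This yields $|B| \lesssim (1+\sqrt{\gamma_{1,\mc{J}}})\xi_i \normET{v} + \xi_i^2 + \eta_i^2$, and hence $\normET{v}^2 \lesssim (\eta_i + (1+\sqrt{\gamma_{1,\mc{J}}})\xi_i)\normET{v} + \xi_i^2 + \eta_i^2$. A final Young inequality absorbs half of $\normET{v}^2$ into the left-hand side, the quadratic residual is dominated by $(\eta_i + (1+\sqrt{\gamma_{1,\mc{J}}})\xi_i)^2$, and taking square roots completes the proof.
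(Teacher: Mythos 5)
Your proposal tracks the paper's proof almost exactly up to the decomposition into three pieces: the quantity controlled by Lemma~\ref{lem:ucboundaux}, the term $\tfrac12\sum_{K}\average{\nabla u_{i,\mc{J}}^{r},\nabla v}_{K}$ handled via Lemma~\ref{lem:urbound}, and the residual pairing $B=\average{r_{i},v_{\mc{J}}}_{\mc{T}}$ with $r_{i}=\varepsilon_{i}u_{i}-\varepsilon_{i,\mc{J}}u_{i,\mc{J}}-V_{\eff}u_{i}+V_{\eff}u_{i,\mc{J}}$. The divergence, and the gap, is in how you treat $B$. You correctly diagnose that the obstruction is the constant mode, but your workaround does not remove it: you reduce the problem to bounding $\norm{v}_{\Omega}$, hence (after splitting $v=(u_{i}-u_{i,\mc{J}})+u_{i,\mc{J}}^{r}$) to bounding $\norm{u_{i,\mc{J}}^{r}}_{\mc{T}}$, and you propose to get this from Lemma~\ref{lem:urbound} via a broken Poincar\'e argument and ``the mean-preserving character of $I_{\mc{J}}$.'' But $\normET{\cdot}$ is only a seminorm on $H^{1}_{\pi}(\Omega)\oplus\mc{V}_{\mc{J}}(\mc{T})$ that vanishes on global constants, so $\normET{u_{i,\mc{J}}^{r}}\lesssim\eta_{i}$ gives no control at all over the mean of $u_{i,\mc{J}}^{r}=u_{i,\mc{J}}-I_{\mc{J}}u_{i,\mc{J}}$; and nothing in Assumption~\ref{asp:regularity} asserts that $I_{\mc{J}}$ preserves means (part 2 only constrains $\nabla(u_{\mc{J}}-I_{\mc{J}}u_{\mc{J}})$). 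An interpolant shifted by an arbitrary constant satisfies every stated hypothesis, yet makes $\norm{u_{i,\mc{J}}^{r}}_{\mc{T}}$, hence your bound on $B$, arbitrarily large. The detour through the coercivity of the shifted $V_{\eff}$ does not help here, since it only converts $\norm{v}_{\Omega}$ into an equivalent quantity.

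The paper's resolution is different and is the one key idea you are missing: because $\mathbf{1}_{K}\in\mc{V}_{\mc{J}}(\mc{T})$ by \eqref{eqn:constantmode} and constants also lie in $H^{1}_{\pi}(\Omega)$, testing both \eqref{eqn:weakeig} and \eqref{eqn:weakDGeig} against the constant function shows $\average{r_{i},c}_{\mc{T}}=0$ for every constant $c$. One may therefore replace $v_{\mc{J}}$ by $v_{\mc{J}}-\Pi_{0}v$ inside $B$ at no cost, where $\Pi_{0}v$ is the mean of $v$; then
\begin{equation*}
  \norm{v_{\mc{J}}-\Pi_{0}v}_{\mc{T}}\le\norm{v_{\mc{J}}-v}_{\mc{T}}+\norm{v-\Pi_{0}v}_{\Omega}
  \lesssim\left(\sqrt{\gamma_{1,\mc{J}}}+C_{p}\right)\norm{\nabla v}_{\Omega},
\end{equation*}
where the Poincar\'e--Wirtinger inequality legitimately applies to the mean-zero function $v-\Pi_{0}v$. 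This makes the constant mode of $v_{\mc{J}}$ irrelevant rather than attempting to bound it, yields $\abs{B}\lesssim(1+\sqrt{\gamma_{1,\mc{J}}})\xi_{i}\normET{v}$ directly, and avoids the quadratic remainder and Young-inequality absorption step in your final paragraph entirely: the paper closes by dividing the resulting linear-in-$\normET{v}$ bound by $\normET{v}$.
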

\begin{proof}
	Since $v\equiv u_{i}-u_{i,\mc{J}}^{c} \in H^{1}_{\pi}(\Omega)$, 
	\begin{equation}
		\normET{u_{i}-u_{i,\mc{J}}^{c}}^2 = 
		A\left( u_{i}-u_{i,\mc{J}}^{c}, v \right).
		\label{}
	\end{equation}
	Then
	\begin{equation}
		\begin{split}
		A\left( u_{i}-u_{i,\mc{J}}^{c}, v \right) 
		& = \average{\varepsilon_{i} u_{i} - V_{\eff} u_{i}, v
		}_{\Omega}
		- A(u_{i,\mc{J}}^{c}, v)\\
		& = \average{\varepsilon_{i} u_{i} - V_{\eff} u_{i}, v }_{\Omega} 
		- D_{\mc{J}}(u_{i,\mc{J}}^{c}, v)\\
		& = \average{\varepsilon_{i} u_{i} - V_{\eff} u_{i}, v }_{\Omega}
		- D_{\mc{J}}(u_{i,\mc{J}}, v) + \frac12 \sum_{K\in T} \average{\nabla
		u_{i,\mc{J}}^{r}, \nabla v}_{K}.
		\end{split}
		\label{}
	\end{equation}
        
        Using the fact that
	\begin{equation}
		\average{\varepsilon_{i,\mc{J}} u_{i,\mc{J}}, v_{\mc{J}}} 
		= D_{\mc{J}}(u_{i,\mc{J}}, v_{\mc{J}}) + K_{\mc{J}}(u_{i,\mc{J}}, v_{\mc{J}})
		+ \average{V_{\eff}u_{i,\mc{J}}, v_{\mc{J}}}_{\mc{T}},
		\label{}
	\end{equation}
	where $v_{\mc{J}}$ is the approximation to $v$ satisfying
	Eq.~\eqref{eqn:gamma1},~\eqref{eqn:gamma2} and ~\eqref{eqn:gamma3}, we have
	\begin{equation}
		\begin{split}
                  A\left( u_{i}-u_{i,\mc{J}}^{c}, v \right) &=
			\average{\varepsilon_{i} u_{i} - V_{\eff} u_{i}, v_{\mc{J}}
			}_{\mc{T}}
			+ \average{\varepsilon_{i} u_{i} - V_{\eff} u_{i}, v - v_{\mc{J}}
			}_{\mc{T}}
		- D_{\mc{J}}(u_{i,\mc{J}}, v) \\
                &+ \frac12 \sum_{K\in T} \average{\nabla
		u_{i,\mc{J}}^{r}, \nabla v}_{K}\\
                &= \average{\varepsilon_{i} u_{i} - \varepsilon_{i,\mc{J}}
		u_{i,\mc{J}} - V_{\eff} u_{i} + V_{\eff} u_{i,\mc{J}}, v_{\mc{J}}
			}_{\mc{T}} \\
                        &+ \average{\varepsilon_{i} u_{i} - V_{\eff} u_{i}, v - v_{\mc{J}}
			}_{\mc{T}}\\
			&- D_{\mc{J}}(u_{i,\mc{J}},v-v_{\mc{J}}) +
      K_{\mc{J}}(u_{i,\mc{J}},v_{\mc{J}}) + \frac12 \sum_{K\in T} \average{\nabla
		u_{i,\mc{J}}^{r}, \nabla v}_{K}
		\end{split}
		\label{}
	\end{equation}
	From Lemma~\ref{lem:ucboundaux},
	\begin{equation}
		\average{\varepsilon_{i} u_{i} - V_{\eff} u_{i}, v - v_{\mc{J}}
		}_{\mc{T}} - D_{\mc{J}}(u_{i,\mc{J}},v-v_{\mc{J}}) +
		K_{\mc{J}}(u_{i,\mc{J}},v_{\mc{J}})
    \lesssim \left( \eta_{i} + \sqrt{\gamma_{1,\mc{J}}} \xi_{i} \right)
		\normET{v}.
		\label{eqn:ucbound_term1}
	\end{equation}
	Also from the Cauchy-Schwarz inequality and Lemma~\ref{lem:urbound},
	\begin{equation}
		\sum_{K\in T} \average{\nabla
		u_{i,\mc{J}}^{r}, \nabla v}_{K} \lesssim
		\normET{u_{i,\mc{J}}^{r}} \normET{v} \lesssim \eta_{i}
		\normET{v}.
		\label{eqn:ucbound_term2}
	\end{equation}
	Finally, define 
	\begin{equation}
		r_{i}=\varepsilon_{i} u_{i} - \varepsilon_{i,\mc{J}} u_{i,\mc{J}} -
		V_{\eff} u_{i} + V_{\eff} u_{i,\mc{J}},
		\label{}
	\end{equation}
	we have
	\begin{equation}
		\average{r_{i},v_{\mc{J}}}_{\mc{T}} = 
		\average{r_{i},v_{\mc{J}} - \Pi_{0} v}_{\mc{T}} 
		+ \average{r_{i},\Pi_{0} v}_{\mc{T}} = \average{r_{i},v_{\mc{J}} -
		\Pi_{0} v}_{\mc{T}}.
		\label{}
	\end{equation}
	Here $\Pi_{0}v = \frac{1}{\abs{\Omega}}\int v(\vr) \ud \vr$ is a constant,
	and $\average{r_{i},\Pi_{0} v}_{\mc{T}}$ vanishes due to
  Eqs.~\eqref{eqn:weakeig},~\eqref{eqn:constantmode} and~\eqref{eqn:weakDGeig}. We have
	\begin{equation}
		\norm{v_{\mc{J}} - \Pi_{0}v}_{\mc{T}} \le \norm{v_{\mc{J}} - v}_{\mc{T}} +
		\norm{v - \Pi_{0}v}_{\Omega}.
		\label{eqn:vJbound}
	\end{equation}
	Using the Poincar\'{e} inequality
	\begin{equation}
		\norm{v - \Pi_{0}v}_{\Omega} \le C_{p} \norm{\nabla v}_{\Omega}
		\label{}
	\end{equation}
  where $C_{p}$ is the Poincar\'{e} constant for domain $\Omega$. In the
  current context, $C_p$ is independent of the choice of basis
  functions, and we have $C_{p}\lesssim 1$.
  Using Eq.~\eqref{eqn:gamma1}, we have
	\begin{equation}
		\norm{v_{\mc{J}} - \Pi_{0}v}_{\mc{T}} \lesssim
    (\sqrt{\gamma_{1,\mc{J}}} + C_p)\norm{\nabla
    v}_{\Omega} \lesssim (1 + \sqrt{\gamma_{1,\mc{J}}}) \normET{v}. 
		\label{}
	\end{equation}
	Again using the Cauchy-Schwarz inequality, we obtain
	\begin{equation}
		\average{r_{i},v_{\mc{J}}}_{\mc{T}}\lesssim (1 +
    \sqrt{\gamma_{1,\mc{J}}}) \xi_{i}
		\normET{v}.
		\label{eqn:ucbound_term3}
	\end{equation}

	Lemma~\ref{lem:ucbound} follows directly from Eqs.~\eqref{eqn:ucbound_term1},~\eqref{eqn:ucbound_term2}
	and~\eqref{eqn:ucbound_term3}.
\end{proof}

\begin{lemma}
	$\forall v\in H^{1}_{\pi}(\Omega)\oplus \mc{V}_{\mc{J}}(\mc{T})$, 
	\begin{equation}
		\norm{\mc{L}v}_{\mc{T}} \le C_{\mc{J}} \norm{\jump{v}}_{\mc{S}},
		\label{eqn:liftbound}
	\end{equation}
	where the constant $C_{\mc{J}}$ only depends on the function spaces
	$\mc{V}_{\mc{J}}(\mc{T})$ and $\mc{W}_{\mc{J}}(\mc{T})$, and is expressed as
	\begin{equation}
		C_{\mc{J}} = 
		\sup_{\vq\in \mc{W}_{\mc{J}}(\mc{T})}
		\frac{\norm{\mean{\vq}}_{\mc{S}}}{\norm{\vq}_{\mc{T}}}.
		\label{eqn:CJformula}
	\end{equation}
	\label{lem:liftbound}
\end{lemma}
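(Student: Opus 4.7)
The plan is to test the defining relation of the lifting operator against a carefully chosen element of $\mc{W}_{\mc{J}}(\mc{T})$, then bound the resulting pairing by Cauchy--Schwarz together with the supremum in \eqref{eqn:CJformula}. Recall that Definition~\ref{def:lifting} asserts
\[
\average{\mc{L}v,\vq_{\mc{J}}}_{\mc{T}} = \average{\jump{v},\mean{\vq_{\mc{J}}}}_{\mc{S}}
\]
for every $\vq_{\mc{J}} \in \mc{W}_{\mc{J}}(\mc{T})$, and crucially $\mc{L}v$ itself lies in $\mc{W}_{\mc{J}}(\mc{T})$. This self-referential feature is what makes the argument short.

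First I would substitute $\vq_{\mc{J}} = \mc{L}v$ into the defining identity, which yields
\[
\norm{\mc{L}v}_{\mc{T}}^{2} = \average{\mc{L}v,\mc{L}v}_{\mc{T}} = \average{\jump{v},\mean{\mc{L}v}}_{\mc{S}}.
\]
Next I would apply the Cauchy--Schwarz inequality with respect to the face inner product $\average{\cdot,\cdot}_{\mc{S}}$ to obtain
\[
\average{\jump{v},\mean{\mc{L}v}}_{\mc{S}} \le \norm{\jump{v}}_{\mc{S}}\,\norm{\mean{\mc{L}v}}_{\mc{S}}.
\]
Then, since $\mc{L}v \in \mc{W}_{\mc{J}}(\mc{T})$, the definition of $C_{\mc{J}}$ in \eqref{eqn:CJformula} immediately gives $\norm{\mean{\mc{L}v}}_{\mc{S}} \le C_{\mc{J}}\,\norm{\mc{L}v}_{\mc{T}}$, so that
\[
\norm{\mc{L}v}_{\mc{T}}^{2} \le C_{\mc{J}}\,\norm{\jump{v}}_{\mc{S}}\,\norm{\mc{L}v}_{\mc{T}}.
\]
Dividing through by $\norm{\mc{L}v}_{\mc{T}}$ (and handling the trivial case $\mc{L}v = 0$ separately) yields \eqref{eqn:liftbound}.

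There is no real obstacle here; the only thing to be careful about is ensuring that $\mc{L}v$ is a legitimate choice of test function, which requires noting that the lifting operator maps into $\mc{W}_{\mc{J}}(\mc{T})$ by construction, and that the face inner product on $\mc{S}$ is a genuine inner product so that Cauchy--Schwarz applies. The characterization of $C_{\mc{J}}$ as a supremum over $\mc{W}_{\mc{J}}(\mc{T})$ rather than over $\mc{W}(\mc{T})$ is precisely what allows the bound to be formulated intrinsically in terms of the chosen basis spaces; in subsequent applications one would need an upper bound on $C_{\mc{J}}$ in terms of the discretization parameters, but that is beyond the scope of this lemma.
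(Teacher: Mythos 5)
Your argument is correct and is essentially identical to the paper's own proof: both test the defining identity with $\vq_{\mc{J}}=\mc{L}v$, apply Cauchy--Schwarz on the face inner product, invoke the supremum defining $C_{\mc{J}}$, and cancel a factor of $\norm{\mc{L}v}_{\mc{T}}$. The only detail the paper adds is the observation that $\mc{W}_{\mc{J}}(\mc{T})$ is finite dimensional, so the supremum $C_{\mc{J}}$ is indeed finite.
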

\begin{proof}
	Use the definition of the lifting operator and the Cauchy-Schwarz inequality,
	\begin{equation}
		\norm{\mc{L} v}^2_{\mc{T}} =
		\average{\jump{v}, \mean{\mc{L}v}}_{\mc{S}} \le
		\norm{\jump{v}}_{\mc{S}} \norm{\mean{\mc{L}v}}_{\mc{S}}	
		\label{eqn:liftbound_step1}
	\end{equation}
	Note that $\mc{L}v\in \mc{W}_{\mc{J}}(\mc{T})$, and
	\begin{equation}
		\begin{split}
			\norm{\mean{\mc{L}v}}_{\mc{S}} \le 
			\left(\sup_{\vq\in \mc{W}_{\mc{J}}(\mc{T})}
			\frac{\norm{\mean{\vq}}_{\mc{S}}}{\norm{\vq}_{\mc{T}}
			} \right) \norm{\mc{L}v}_{\mc{T}} \equiv C_{\mc{J}}
                        \norm{\mc{L}v}_{\mc{T}}.
		\end{split}
		\label{}
	\end{equation}
	Since $\mc{W}_{\mc{J}}(\mc{T})$ is finite dimensional, the
	constant $C_{\mc{J}}$ defined in Eq.~\eqref{eqn:CJformula} 
	is finite and only depends on the function spaces
	$\mc{V}_{\mc{J}}(\mc{T})$ and $\mc{W}_{\mc{J}}(\mc{T})$.
	Then
	\begin{equation} 
		\begin{split}
			\norm{\mc{L} v}^2_{\mc{T}} 	\le	C_{\mc{J}} \norm{\jump{v}}_{\mc{S}}
			\norm{\mc{L}v}_{\mc{T}}.
		\end{split}
		\label{eqn:liftbound_step2}
	\end{equation}
	Eliminating $\norm{\mc{L}v}_{\mc{T}}$ from both sides of
	Eq.~\eqref{eqn:liftbound_step2}, we arrive at
	Eq.~\eqref{eqn:liftbound}.
\end{proof}

\begin{lemma}
	If the penalty parameter satisfies
  \begin{equation}
    \alpha(J_{F})> 2 C^2_{\mc{J}}
    \label{eqn:alphacondition}
  \end{equation}
  for $C_{\mc{J}}$ defined in Eq.~\eqref{eqn:CJformula}, then $\forall u \in H^{1}_{\pi}(\Omega)\oplus
  \mc{V}_{\mc{J}}(\mc{T})$, 
	\begin{equation}
		\frac12 \normET{u}^2 \le \wt{A}_{\mc{J}}(u,u) \le 2 \normET{u}^2,
		\label{eqn:wtAbound}
	\end{equation}
	\ie  the extended bilinear form $\wt{A}_{\mc{J}}(u,v)$ defined in
	Eq.~\ref{eqn:bilinearDGextend} is both coercive and continuous with
	respect to the energy norm on $H^{1}_{\pi}(\Omega)\oplus
	\mc{V}_{\mc{J}}(\mc{T})$.
	\label{lem:Atildebound}
\end{lemma}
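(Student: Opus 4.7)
The plan is to unfold $\wt{A}_{\mc{J}}(u,u)$ using the definition in Eq.~\eqref{eqn:bilinearDGextend}. Combining the two symmetric middle terms, one finds
\[
\wt{A}_{\mc{J}}(u,u) = \tfrac{1}{2}\norm{\nabla u}_{\mc{T}}^2 - \average{\mc{L}u,\nabla u}_{\mc{T}} + \sum_{F\in\mc{S}}\alpha(J_F)\norm{\jump{u}}_F^2.
\]
The first and third summands together are exactly $\normET{u}^2$, so both the coercivity and continuity estimates reduce to controlling the single cross term $\average{\mc{L}u,\nabla u}_{\mc{T}}$.

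To handle this term, I would first apply the Cauchy--Schwarz inequality on $\mc{T}$ to obtain $\abs{\average{\mc{L}u,\nabla u}_{\mc{T}}} \le \norm{\mc{L}u}_{\mc{T}}\norm{\nabla u}_{\mc{T}}$, then invoke Lemma~\ref{lem:liftbound} to replace $\norm{\mc{L}u}_{\mc{T}}$ by $C_{\mc{J}}\norm{\jump{u}}_{\mc{S}}$, and finally use Young's inequality with a free parameter $\epsilon>0$ to split the product into a $\norm{\nabla u}_{\mc{T}}^2$ contribution and a $\norm{\jump{u}}_{\mc{S}}^2 = \sum_{F\in\mc{S}}\norm{\jump{u}}_F^2$ contribution.

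For the coercivity bound, the idea is to pick $\epsilon$ so that at most $\tfrac{1}{4}\norm{\nabla u}_{\mc{T}}^2$ of the gradient part is consumed, leaving $\tfrac{1}{4}\norm{\nabla u}_{\mc{T}}^2 = \tfrac{1}{2}\cdot\tfrac{1}{2}\norm{\nabla u}_{\mc{T}}^2$ to match the energy norm, and then to absorb the residual jump term $\sim C_{\mc{J}}^2\norm{\jump{u}}_{\mc{S}}^2$ into $\sum_{F}\alpha(J_F)\norm{\jump{u}}_F^2$. The penalty condition $\alpha(J_F)>2C_{\mc{J}}^2$ is exactly what is needed: it gives $\sum_{F}\alpha(J_F)\norm{\jump{u}}_F^2 - C_{\mc{J}}^2\sum_{F}\norm{\jump{u}}_F^2 \ge \tfrac{1}{2}\sum_{F}\alpha(J_F)\norm{\jump{u}}_F^2$, so the required $\tfrac{1}{2}\normET{u}^2$ drops out. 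The continuity bound is analogous but symmetric: choose $\epsilon$ so that the gradient contribution is at most $\tfrac{1}{2}\norm{\nabla u}_{\mc{T}}^2$ (summing with the original $\tfrac{1}{2}\norm{\nabla u}_{\mc{T}}^2$ to give $\norm{\nabla u}_{\mc{T}}^2 \le 2\cdot\tfrac{1}{2}\norm{\nabla u}_{\mc{T}}^2$), while the residual jump term is dominated using $\alpha(J_F) > 2C_{\mc{J}}^2 > C_{\mc{J}}^2/2$, so the jump contribution is at most $2\sum_{F}\alpha(J_F)\norm{\jump{u}}_F^2$.

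There is no real conceptual obstacle here, since Lemma~\ref{lem:liftbound} already does the essential analytic work of bounding the lifting operator by the jump seminorm. The only care needed is bookkeeping: choosing the two Young parameters (one for each inequality) so that the single penalty assumption $\alpha(J_F)>2C_{\mc{J}}^2$ simultaneously yields the $\tfrac{1}{2}$ and $2$ constants in \eqref{eqn:wtAbound}. A natural choice is $\epsilon=\tfrac{1}{2}$ for the lower bound and $\epsilon=1$ for the upper bound, which should produce the claimed constants directly.
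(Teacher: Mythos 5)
Your proposal is correct and follows essentially the same route as the paper's proof: expand $\wt{A}_{\mc{J}}(u,u)$, bound the single cross term $\average{\mc{L}u,\nabla u}_{\mc{T}}$ via Cauchy--Schwarz, Lemma~\ref{lem:liftbound}, and Young's inequality, then absorb the resulting jump contribution using $\alpha(J_F)>2C_{\mc{J}}^2$. Your parameter choices ($\epsilon=\tfrac12$ for coercivity, $\epsilon=1$ for continuity) coincide with the paper's choices of $\theta$ and yield exactly the constants $\tfrac12$ and $2$ in \eqref{eqn:wtAbound}.
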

\begin{proof}
	We first prove the coercivity. Using the Cauchy-Schwarz
        inequality, we have
	\begin{equation}
		\begin{split}
			\wt{A}_{\mc{J}}(u,u) =& \frac{1}{2}\average{\nabla u,\nabla
			u}_{\mc{T}}
			- \average{\mc{L}u, \nabla u}_{\mc{T}}
			+ \sum_{F\in \mc{S}} \alpha(J_F)
			\average{\jump{u},\jump{u}}_{F}\\
			\ge & \frac{1}{2}\average{\nabla u,\nabla
			u}_{\mc{T}}
			- \norm{\mc{L}u}_{\mc{T}} \norm{\nabla u}_{\mc{T}}
			+ \sum_{F\in \mc{S}} \alpha(J_F)
			\average{\jump{u},\jump{u}}_{F}\\
			\ge & \frac{1}{2}\average{\nabla u,\nabla
			u}_{\mc{T}}
			- \frac{1}{2\theta}  \average{\mc{L}u,\mc{L}u}_{\mc{T}}
			- \frac{\theta}{2}   \average{\nabla u, \nabla u}_{\mc{T}}
			+ \sum_{F\in \mc{S}} \alpha(J_F)
			\average{\jump{u},\jump{u}}_{F}\\
			\ge & \frac{1-\theta}{2}\average{\nabla u,\nabla
			u}_{\mc{T}}
			+ \sum_{F\in \mc{S}} \left(\alpha(J_F) - \frac{C_\mc{J}^2}{2\theta}\right)
			\average{\jump{u},\jump{u}}_{F}
		\end{split}
		\label{eqn:wtAbound_step1}
	\end{equation}
	The last inequality in ~\eqref{eqn:wtAbound_step1} uses
	Lemma~\ref{lem:liftbound}, and $\theta$ can be any positive constant.
	Here we choose $\theta=\frac12$.  Using the assumption that
	$\alpha(J_F)> 2C_{\mc{J}}^2$, 
	\begin{equation}
			\wt{A}_{\mc{J}}(u,u) \ge  
			\frac{1}{4}\average{\nabla u,\nabla u}_{\mc{T}}
			+ \frac12 \sum_{F\in \mc{S}} \alpha(J_F) 
			\average{\jump{u},\jump{u}}_{F} = \frac12 \normET{u}^2,
		\label{eqn:wtAbound_step2}
	\end{equation}
	which proves the coercivity of the extended bilinear form.
	
	We apply the same procedure to prove continuity.
	\begin{equation}
		\begin{split}
			\wt{A}_{\mc{J}}(u,u) 
			\le & \frac{1}{2}\average{\nabla u,\nabla
			u}_{\mc{T}}
			+ \norm{\mc{L}u}_{\mc{T}} \norm{\nabla u}_{\mc{T}}
			+ \sum_{F\in \mc{S}} \alpha(J_F)
			\average{\jump{u},\jump{u}}_{F}\\
			\le & \frac{1}{2}\average{\nabla u,\nabla
			u}_{\mc{T}}
			+ \frac{1}{2\theta}  \average{\mc{L}u,\mc{L}u}_{\mc{T}}
			+ \frac{\theta}{2}   \average{\nabla u, \nabla u}_{\mc{T}}
			+ \sum_{F\in \mc{S}} \alpha(J_F)
			\average{\jump{u},\jump{u}}_{F}\\
			\le & \frac{1+\theta}{2}\average{\nabla u,\nabla
			u}_{\mc{T}}
			+ \sum_{F\in \mc{S}} \left(\alpha(J_F) + \frac{C_\mc{J}^2}{2\theta}\right)
			\average{\jump{u},\jump{u}}_{F}
		\end{split}
		\label{eqn:wtAbound_step3}
	\end{equation}
	Again $\theta$ can be any positive constant.  Here we choose
	$\theta=1$. Then
	\begin{equation}
			\wt{A}_{\mc{J}}(u,u) 
			\le  \average{\nabla u,\nabla
			u}_{\mc{T}}
			+ 2 \sum_{F\in \mc{S}}  \alpha(J_F) 
			\average{\jump{u},\jump{u}}_{F} = 2 \normET{u}^2,
		\label{eqn:wtAbound_step4}
	\end{equation}
	which proves the continuity of the extended bilinear form.
\end{proof}

\begin{corollary}
  \label{cor:ADGcoercive}
  If Eq.~\eqref{eqn:alphacondition} is satisfied, then for
  $u_{\mc{J}}\in \mc{V}_{\mc{J}}(\mc{T})$, we have
	\begin{equation}
    \frac12 \normET{u_{\mc{J}}}^2 \le A_{\mc{J}}(u_{\mc{J}},u_{\mc{J}}) 
    \le 2 \normET{u_{\mc{J}}}^2,
		\label{eqn:ADGbound}
	\end{equation}
  \ie  the bilinear form $A_{\mc{J}}(u_{\mc{J}},v_{\mc{J}})$ is both
  coercive and continuous with respect to the energy norm on
  $\mc{V}_{\mc{J}}(\mc{T})$.
\end{corollary}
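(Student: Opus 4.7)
The plan is to deduce this corollary directly from Lemma \ref{lem:Atildebound} by showing that $\wt{A}_{\mc{J}}$ and $A_{\mc{J}}$ agree on the subspace $\mc{V}_{\mc{J}}(\mc{T})$. Since $\mc{V}_{\mc{J}}(\mc{T}) \subset H^{1}_{\pi}(\Omega) \oplus \mc{V}_{\mc{J}}(\mc{T})$, the bounds $\tfrac12 \normET{u_{\mc{J}}}^2 \le \wt{A}_{\mc{J}}(u_{\mc{J}},u_{\mc{J}}) \le 2\normET{u_{\mc{J}}}^2$ follow immediately from the lemma, and it then suffices to establish the identity $\wt{A}_{\mc{J}}(u_{\mc{J}},u_{\mc{J}}) = A_{\mc{J}}(u_{\mc{J}},u_{\mc{J}})$ for $u_{\mc{J}} \in \mc{V}_{\mc{J}}(\mc{T})$.

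To verify this identity, I would compare the definitions in Eq.~\eqref{eqn:bilinearDG} and Eq.~\eqref{eqn:bilinearDGextend}. The first and last terms of the two forms are manifestly identical, so only the two ``cross'' terms need attention. The key observation is that for any $u_{\mc{J}}, v_{\mc{J}} \in \mc{V}_{\mc{J}}(\mc{T})$, we have $\nabla u_{\mc{J}}, \nabla v_{\mc{J}} \in \mc{W}_{\mc{J}}(\mc{T})$ by definition of the derivative space. Thus, taking $\vq_{\mc{J}} = \nabla v_{\mc{J}}$ in Definition \ref{def:lifting}, the defining property of the lifting operator gives
\[
\average{\mc{L}u_{\mc{J}}, \nabla v_{\mc{J}}}_{\mc{T}} = \average{\jump{u_{\mc{J}}}, \mean{\nabla v_{\mc{J}}}}_{\mc{S}},
\]
and analogously for the symmetric term. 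Specializing to $v_{\mc{J}} = u_{\mc{J}}$ confirms that the two cross terms in $\wt{A}_{\mc{J}}(u_{\mc{J}},u_{\mc{J}})$ coincide with those in $A_{\mc{J}}(u_{\mc{J}},u_{\mc{J}})$, yielding the desired identity. Combining with Lemma \ref{lem:Atildebound} completes the proof of \eqref{eqn:ADGbound}.

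There is essentially no obstacle here; this corollary is a straightforward consequence of the consistency relation $\wt{A}_{\mc{J}}|_{\mc{V}_{\mc{J}} \times \mc{V}_{\mc{J}}} = A_{\mc{J}}$, which was already noted informally in the text immediately after Eq.~\eqref{eqn:bilinearDGextend}. The only subtlety worth emphasizing is that this consistency genuinely requires $\nabla \mc{V}_{\mc{J}}(\mc{T}) \subset \mc{W}_{\mc{J}}(\mc{T})$, which holds by construction of $\mc{W}_{\mc{J}}(\mc{T})$ in \eqref{eqn:bilinearDG}'s associated derivative space, but which would fail if one tried to test the lifting identity against an arbitrary $\vq \notin \mc{W}_{\mc{J}}(\mc{T})$. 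This is precisely why the argument cannot be used to directly show that $A_{\mc{J}}$ itself is coercive on the larger joint space, motivating the introduction of the extended form $\wt{A}_{\mc{J}}$ in the first place.
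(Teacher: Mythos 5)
Your proof is correct and follows the same route as the paper: the paper's proof is simply to invoke the consistency $\wt{A}_{\mc{J}}(u_{\mc{J}},v_{\mc{J}})=A_{\mc{J}}(u_{\mc{J}},v_{\mc{J}})$ on $\mc{V}_{\mc{J}}(\mc{T})$ and apply Lemma~\ref{lem:Atildebound}. You additionally spell out why the consistency holds (testing the lifting identity against $\nabla v_{\mc{J}}\in\mc{W}_{\mc{J}}(\mc{T})$), which is a correct and slightly more complete version of the same argument.
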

\begin{proof}
  Just note that
  $\wt{A}(u_{\mc{J}},v_{\mc{J}})=A_{\mc{J}}(u_{\mc{J}},v_{\mc{J}})$ for
  $u_{\mc{J}},v_{\mc{J}}\in \mc{V}_{\mc{J}}(\mc{T})$ and use
  Lemma~\ref{lem:Atildebound}.
\end{proof}

\begin{lemma}
	\label{lem:wtAidentity}
	Let $(\varepsilon_{i,\mc{J}},u_{i,\mc{J}})$ be a computed eigenpair
	corresponding to~\eqref{eqn:weakDGeig}, and $(\varepsilon_{i},u_{i})$
	an eigenpair corresponding to~\eqref{eqn:weakeig}.
	Then we have the following identities 
	\begin{equation}
		\begin{split}
		&\wt{A}_{\mc{J}}(u_{i}-u_{i,\mc{J}}, u_{i}-u_{i,\mc{J}}) 
		+ \average{V_{\eff}(u_{i}-u_{i,\mc{J}}),
		(u_{i}-u_{i,\mc{J}})}_{\mc{T}}\\
		=& \varepsilon_{i} \norm{u_{i}-u_{i,\mc{J}}}^2_{\Omega} 
		+ ( \varepsilon_{i,\mc{J}} - \varepsilon_{i} )\norm{u_{i,\mc{J}}}^2_{\mc{T}} 
		+ 2 \Re \mc{R}(u_{i}, u_{i} - u_{i,\mc{J}}),
		\end{split}
		\label{eqn:wtAidentity1}
	\end{equation}
	and
	\begin{equation}
		\begin{split}
		&\wt{A}_{\mc{J}}(u_{i}-u_{i,\mc{J}}, u_{i}-u_{i,\mc{J}}) 
		+ \average{V_{\eff}(u_{i}-u_{i,\mc{J}}),
		(u_{i}-u_{i,\mc{J}})}_{\mc{T}}\\
		=& \varepsilon_{i,\mc{J}} \norm{u_{i}-u_{i,\mc{J}}}^2_{\Omega} 
		+ ( \varepsilon_{i} - \varepsilon_{i,\mc{J}} )\norm{u_{i}}^2_{\Omega} 
		+ 2\Re \mc{R}_{\mc{J}}(u_{i,\mc{J}}, u_{i,\mc{J}} - u_{i}).
		\end{split}
		\label{eqn:wtAidentity2}
	\end{equation}
	Here the extended residual $\mc{R}$ and $\mc{R}_{\mc{J}}$ are defined
	in Eq.~\eqref{eqn:weakresidual} and~\eqref{eqn:weakDGresidual},
	respectively.  
	\label{}
\end{lemma}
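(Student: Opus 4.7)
The plan is to prove both identities by direct algebraic expansion of the bilinear form $B(u,v) \equiv \wt{A}_{\mc{J}}(u,v) + \average{V_{\eff} u, v}_{\mc{T}}$, exploiting the fact that $B$ is Hermitian on $H^{1}_{\pi}(\Omega)\oplus \mc{V}_{\mc{J}}(\mc{T})$ (the gradient-gradient, face-jump, and potential terms are all Hermitian sesquilinear forms, and the lifting operator enters in a symmetric combination in~\eqref{eqn:bilinearDGextend}). Expanding
\[
B(u_i - u_{i,\mc{J}}, u_i - u_{i,\mc{J}}) = B(u_i, u_i) + B(u_{i,\mc{J}}, u_{i,\mc{J}}) - 2\Re B(u_i, u_{i,\mc{J}})
\]
reduces the problem to evaluating the three pieces on the right-hand side.

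For the diagonal pieces, I would use that $\wt{A}_{\mc{J}} = A$ on $H^{1}_{\pi}(\Omega)$, so from~\eqref{eqn:weakeig} we obtain $B(u_i, u_i) = \varepsilon_i \norm{u_i}^2_{\Omega} = \varepsilon_i$, and from~\eqref{eqn:weakDGeig} together with $\wt{A}_{\mc{J}}|_{\mc{V}_{\mc{J}} \times \mc{V}_{\mc{J}}} = A_{\mc{J}}$ we obtain $B(u_{i,\mc{J}}, u_{i,\mc{J}}) = \varepsilon_{i,\mc{J}} \norm{u_{i,\mc{J}}}^2_{\mc{T}} = \varepsilon_{i,\mc{J}}$; equivalently, $\mc{R}(u_i, u_i) = 0$ and $\mc{R}_{\mc{J}}(u_{i,\mc{J}}, u_{i,\mc{J}}) = 0$. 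For the cross term, to prove the first identity I rewrite it via the residual definition~\eqref{eqn:weakresidual}:
\[
B(u_i, u_{i,\mc{J}}) = \mc{R}(u_i, u_{i,\mc{J}}) + \varepsilon_i \average{u_i, u_{i,\mc{J}}}_{\mc{T}},
\]
and use $\mc{R}(u_i, u_{i,\mc{J}}) = -\mc{R}(u_i, u_i - u_{i,\mc{J}})$ (valid since $\mc{R}(u_i, u_i) = 0$ and $\mc{R}(u_i, \cdot)$ is linear in its second slot). Combining these with the polarization identity $2\Re \average{u_i, u_{i,\mc{J}}}_{\mc{T}} = \norm{u_i}^2_{\Omega} + \norm{u_{i,\mc{J}}}^2_{\mc{T}} - \norm{u_i - u_{i,\mc{J}}}^2_{\Omega} = 2 - \norm{u_i - u_{i,\mc{J}}}^2_{\Omega}$ and collecting terms (rewriting the scalar $\varepsilon_{i,\mc{J}} - \varepsilon_i$ as $(\varepsilon_{i,\mc{J}} - \varepsilon_i)\norm{u_{i,\mc{J}}}^2_{\mc{T}}$) yields~\eqref{eqn:wtAidentity1} exactly.

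The second identity~\eqref{eqn:wtAidentity2} follows by the same bookkeeping, but with the roles of $u_i$ and $u_{i,\mc{J}}$ swapped in the cross-term manipulation: one writes $B(u_{i,\mc{J}}, u_i) = \mc{R}_{\mc{J}}(u_{i,\mc{J}}, u_i) + \varepsilon_{i,\mc{J}} \average{u_{i,\mc{J}}, u_i}_{\mc{T}}$, uses $\mc{R}_{\mc{J}}(u_{i,\mc{J}}, u_i) = -\mc{R}_{\mc{J}}(u_{i,\mc{J}}, u_{i,\mc{J}} - u_i)$, and reassembles using $\norm{u_i}^2_{\Omega} = 1$ to turn $\varepsilon_i - \varepsilon_{i,\mc{J}}$ into $(\varepsilon_i - \varepsilon_{i,\mc{J}}) \norm{u_i}^2_{\Omega}$. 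No step is analytically delicate; the only mild pitfall is handling the Hermitian symmetry correctly so that $B(u_i, u_{i,\mc{J}}) + B(u_{i,\mc{J}}, u_i) = 2\Re B(u_i, u_{i,\mc{J}})$, and ensuring the $\Re$ is threaded through the residual terms consistently. The identity is purely algebraic, so there is no real obstacle beyond this careful bookkeeping.
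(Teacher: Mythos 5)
Your proof is correct and follows essentially the same route as the paper: expand the quadratic form $\wt{A}_{\mc{J}}+\average{V_{\eff}\cdot,\cdot}_{\mc{T}}$ using Hermitian symmetry, evaluate the diagonal terms via the two eigenvalue equations and the consistency of $\wt{A}_{\mc{J}}$ with $A$ and $A_{\mc{J}}$, convert the cross term through the residual definitions, and finish with the polarization identity. The only cosmetic difference is that you invoke the normalizations $\norm{u_i}_{\Omega}=\norm{u_{i,\mc{J}}}_{\mc{T}}=1$ up front and reinsert the norm factors at the end, whereas the paper carries $\norm{u_i}^2_{\Omega}$ and $\norm{u_{i,\mc{J}}}^2_{\mc{T}}$ symbolically throughout.
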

\begin{proof}
	We first prove Eq.~\eqref{eqn:wtAidentity1}.
	\begin{equation}
		\begin{split}
			  & \wt{A}_{\mc{J}}(u_{i}-u_{i,\mc{J}}, u_{i}-u_{i,\mc{J}})	
			+ \average{V_{\eff}(u_{i}-u_{i,\mc{J}}),
			(u_{i}-u_{i,\mc{J}})}_{\mc{T}}\\
			= & \varepsilon_{i} \norm{u_{i}}^2_{\Omega} + \varepsilon_{i,\mc{J}}  
			\norm{u_{i,\mc{J}}}^2_{\mc{T}}  
			- 2 \Re \wt{A}_{\mc{J}}(u_{i},u_{i,\mc{J}})
			- 2 \Re \average{V_{\eff}u_{i}, u_{i,\mc{J}}}_{\mc{T}}
		\end{split}
		\label{eqn:identityaux1}
	\end{equation}
	We have
	\begin{equation}
		\begin{split}
			  &2 \Re \wt{A}_{\mc{J}}(u_{i},u_{i,\mc{J}}) 
			+ 2 \Re \average{V_{\eff}u_{i}, u_{i,\mc{J}}}_{\mc{T}}	\\
			= &
			2 \Re \wt{A}_{\mc{J}}(u_{i},u_{i,\mc{J}}-u_{i}) 
	   	+ 2 \Re	\average{V_{\eff}u_{i}, u_{i,\mc{J}}-u_{i}}_{\mc{T}}
			+ 2 \Re \wt{A}_{\mc{J}}(u_{i},u_{i}) \\
                        &+ 2 \Re	\average{V_{\eff}u_{i}, u_{i}}_{\mc{T}}
			\\
			= & 2 \Re \mc{R}(u_{i},u_{i,\mc{J}}-u_{i}) 
			  + 2 \varepsilon_{i} \Re \average{u_{i},u_{i,\mc{J}}-u_{i}}_{\mc{T}}
			+ 2 \varepsilon_{i} \average{u_{i},u_{i}}_{\Omega} 
			\\
			= & 2 \Re \mc{R}(u_{i},u_{i,\mc{J}}-u_{i}) 
			   + 2 \varepsilon_{i} \Re \average{u_{i},u_{i,\mc{J}}}_{\mc{T}}.
		\end{split}
		\label{eqn:identityaux2}
	\end{equation}
	Moreover
	\begin{equation}
		\norm{u_{i}-u_{i,\mc{J}}}^{2}_{\mc{T}} = 
		\norm{u_{i}}^2_{\Omega} + \norm{u_{i,\mc{J}}}^2_{\mc{T}} - 2
		\Re\average{u_{i},u_{i,\mc{J}}}_{\mc{T}},
		\label{eqn:identityaux3}
	\end{equation}
        Applying Eqs.~\eqref{eqn:identityaux2} and~\eqref{eqn:identityaux3}
        to Eq.~\eqref{eqn:identityaux1} proves
        Eq.~\eqref{eqn:wtAidentity1}.  Exchanging the role of $u_{i}$ and
        $u_{i,\mc{J}}$, as well as that of $\varepsilon_{i}$ and
        $\varepsilon_{i,\mc{J}}$ in the above derivation gives
        Eq.~\eqref{eqn:wtAidentity2}.
\end{proof}

\bibliographystyle{siam}
\bibliography{dgapost}

\begin{thebibliography}{10}

\bibitem{AndersenSaha2000}
{\sc O.~K. Andersen and T.~Saha-Dasgupta}, {\em Muffin-tin orbitals of
  arbitrary order}, Phys. Rev. B, 62 (2000), pp.~R16219--R16222.

\bibitem{Arnold1982}
{\sc D.~N. Arnold}, {\em An interior penalty finite element method with
  discontinuous elements}, SIAM J. Numer. Anal., 19 (1982), pp.~742 -- 760.

\bibitem{ArnoldBrezziCockburnEtAl2002}
{\sc D.~N. Arnold, F.~Brezzi, B.~Cockburn, and L.~D. Marini}, {\em Unified
  analysis of discontinuous {G}alerkin methods for elliptic problems}, SIAM J.
  Numer. Anal., 39 (2002), pp.~1749--1779.

\bibitem{BabuskaZlamal:73}
{\sc I.~Babu\v{s}ka and M.~Zl{\'a}mal}, {\em Nonconforming elements in the
  finite element method with penalty}, SIAM J. Numer. Anal., 10 (1973), pp.~863
  -- 875.

\bibitem{BeckerRannacher2001}
{\sc R.~Becker and R.~Rannacher}, {\em An optimal control approach to a
  posteriori error estimation in finite element methods}, Acta Numer., 10
  (2001), pp.~1--102.

\bibitem{BlumGehrkeHankeEtAl2009}
{\sc V.~Blum, R.~Gehrke, F.~Hanke, P.~Havu, V.~Havu, X.~Ren, K.~Reuter, and
  M.~Scheffler}, {\em {Ab initio molecular simulations with numeric
  atom-centered orbitals}}, Comput. Phys. Commun., 180 (2009), pp.~2175--2196.

\bibitem{BraessPillweinSchoberl2009}
{\sc D.~Braess, V.~Pillwein, and J.~Sch{\"o}berl}, {\em Equilibrated residual
  error estimates are $p$-robust}, Comput. Methods Appl. Mech. Engrg., 198
  (2009), pp.~1189--1197.

\bibitem{CeperleyAlder1980}
{\sc D.~M. Ceperley and B.~J. Alder}, {\em {Ground state of the electron gas by
  a stochastic method}}, Phys. Rev. Lett., 45 (1980), pp.~566--569.

\bibitem{ChelikowskyTroullierSaad1994}
{\sc J.~Chelikowsky, N.~Troullier, and Y.~Saad}, {\em
  Finite-difference-pseudopotential method: Electronic structure calculations
  without a basis}, Phys. Rev. Lett., 72 (1994), pp.~1240--1243.

\bibitem{ChenDaiGongEtAl2013}
{\sc H.~Chen, X.~Dai, X.~Gong, L.~He, and A.~Zhou}, {\em Adaptive finite
  element approximations for {Kohn-Sham} models}, arXiv:1302.6896,  (2013).

\bibitem{ChenGuoHe2010}
{\sc M.~Chen, G.~C. Guo, and L.~He}, {\em {Systematically improvable optimized
  atomic basis sets for ab initio calculations}}, J. Phys.: Condens. Matter, 22
  (2010), pp.~445501--445509.

\bibitem{CockburnKarniadakisShu:00}
{\sc B.~Cockburn, G.E. Karniadakis, and C.-W. Shu}, {\em Discontinuous
  {G}alerkin methods: {T}heory, computation and applications}, vol.~11 of
  Lecture Notes in Computational Science and Engineering, Springer-Verlag,
  Berlin, 2000.

\bibitem{CockburnShu:01}
{\sc B.~Cockburn and C.-W. Shu}, {\em {R}unge--{K}utta discontinuous {G}alerkin
  methods for convection-dominated problems}, J. Sci. Comp., 16 (2001),
  pp.~173--261.

\bibitem{DaiGongYangEtAl2011}
{\sc X.~Dai, X.~Gong, Z.~Yang, D.~Zhang, and A.~Zhou}, {\em Finite volume
  discretizations for eigenvalue problems with applications to electronic
  structure calculations}, Multiscale Model. Sim., 9 (2011), pp.~208--240.

\bibitem{DaiXuZhou2008}
{\sc X.~Dai, J.~Xu, and A.~Zhou}, {\em Convergence and optimal complexity of
  adaptive finite element eigenvalue computations}, Numer. Math., 110 (2008),
  pp.~313--355.

\bibitem{DuranPadraRodriguez2003}
{\sc R.~G. Dur{\'a}n, C.~Padra, and R.~Rodr{\'\i}guez}, {\em A posteriori error
  estimates for the finite element approximation of eigenvalue problems}, Math.
  Mod. Meth. Appl. Sci., 13 (2003), pp.~1219--1229.

\bibitem{ErnNicaiseVohralik2007}
{\sc A.~Ern, S.~Nicaise, and M.~Vohral{\'\i}k}, {\em An accurate {H (div)} flux
  reconstruction for discontinuous {G}alerkin approximations of elliptic
  problems}, C. R. Math. Acad. Sci. Paris, 345 (2007), pp.~709--712.

\bibitem{Giani2012}
{\sc S.~Giani}, {\em An a posteriori error estimator for hp-adaptive
  discontinuous {G}alerkin methods for computing band gaps in photonic
  crystals}, J. Comput. Appl. Math., 236 (2012), pp.~4810--4826.

\bibitem{GianiHall2012}
{\sc S.~Giani and E.~J.~C. Hall}, {\em An a posteriori error estimator for
  \textit{hp}-adaptive discontinuous {G}alerkin methods for elliptic eigenvalue
  problems}, Math. Mod. Meth. Appl. Sci., 22 (2012), pp.~1250030--1250064.

\bibitem{abinit1}
{\sc X.~Gonze, B.~Amadon, P.M. Anglade, J.~M. Beuken, F.~Bottin, P.~Boulanger,
  F.~Bruneval, D.~Caliste, R.~Caracas, M.~Cote, et~al.}, {\em Abinit:
  First-principles approach to material and nanosystem properties}, Comput.
  Phys. Commun., 180 (2009), pp.~2582--2615.

\bibitem{HartwigsenGoedeckerHutter1998}
{\sc C.~Hartwigsen, S.~Goedecker, and J.~Hutter}, {\em Relativistic separable
  dual-space gaussian pseudopotentials from h to rn}, Phys. Rev. B, 58 (1998),
  pp.~3641--3662.

\bibitem{HohenbergKohn1964}
{\sc P.~Hohenberg and W.~Kohn}, {\em {Inhomogeneous electron gas}}, Phys. Rev.,
  136 (1964), pp.~B864--B871.

\bibitem{HoustonSchotzauWihler2007}
{\sc P.~Houston, D.~Sch{\"o}tzau, and T.~P. Wihler}, {\em Energy norm a
  posteriori error estimation of hp-adaptive discontinuous {G}alerkin methods
  for elliptic problems}, Math. Mod. Meth. Appl. Sci., 17 (2007), pp.~33--62.

\bibitem{Junquera:01}
{\sc J.~Junquera, O.~Paz, D.~Sanchez-Portal, and E.~Artacho}, {\em Numerical
  atomic orbitals for linear-scaling calculations}, Phys. Rev. B, 64 (2001),
  pp.~235111--235119.

\bibitem{KarakashianPascal2003}
{\sc O.~A. Karakashian and F.~Pascal}, {\em A posteriori error estimates for a
  discontinuous {G}alerkin approximation of second-order elliptic problems},
  SIAM J. Numer. Anal., 41 (2003), pp.~2374--2399.

\bibitem{KohnSham1965}
{\sc W.~Kohn and L.~Sham}, {\em {Self-consistent equations including exchange
  and correlation effects}}, Phys. Rev., 140 (1965), pp.~A1133--A1138.

\bibitem{Larson2000}
{\sc M.~G. Larson}, {\em A posteriori and a priori error analysis for finite
  element approximations of self-adjoint elliptic eigenvalue problems}, SIAM J.
  Numer. Anal., 38 (2000), pp.~608--625.

\bibitem{LinLuYingE2012}
{\sc L.~Lin, J.~Lu, L.~Ying, and W.~E}, {\em {Adaptive local basis set for
  Kohn-Sham density functional theory in a discontinuous Galerkin framework I:
  Total energy calculation}}, J. Comput. Phys., 231 (2012), pp.~2140--2154.

\bibitem{LuceWohlmuth2004}
{\sc R.~Luce and B.~I. Wohlmuth}, {\em A local a posteriori error estimator
  based on equilibrated fluxes}, SIAM J. Numer. Anal., 42 (2004),
  pp.~1394--1414.

\bibitem{Martin2004}
{\sc R.~Martin}, {\em Electronic Structure -- Basic Theory and Practical
  Methods}, Cambridge Univ. Pr., West Nyack, {NY}, 2004.

\bibitem{Ozaki:03}
{\sc T.~Ozaki}, {\em Variationally optimized atomic orbitals for large-scale
  electronic structures}, Phys. Rev. B, 67 (2003), pp.~155108--155112.

\bibitem{PaskSterne2005}
{\sc J.~E. Pask and P.~A. Sterne}, {\em Real-space formulation of the
  electrostatic potential and total energy of solids}, Phys. Rev. B, 71 (2005),
  pp.~113101--113104.

\bibitem{PayneTeterAllenEtAl1992}
{\sc M.~C. Payne, M.~P. Teter, D.~C. Allen, T.~A. Arias, and J.~D.
  Joannopoulos}, {\em Iterative minimization techniques for {\em ab initio}
  total energy calculation: molecular dynamics and conjugate gradients}, Rev.
  Mod. Phys., 64 (1992), pp.~1045--1097.

\bibitem{PerdewZunger1981}
{\sc J.~P. Perdew and A.~Zunger}, {\em {Self-interaction correction to
  density-functional approximations for many-electron systems}}, Phys. Rev. B,
  23 (1981), pp.~5048--5079.

\bibitem{QianLiQiEtAl2008}
{\sc X.~Qian, J.~Li, L.~Qi, C.~Z. Wang, T.~L. Chan, Y.~X. Yao, K.~M. Ho, and
  S.~Yip}, {\em Quasiatomic orbitals for ab initio tight-binding analysis},
  Phys. Rev. B, 78 (2008), pp.~245112--245134.

\bibitem{Repin2008}
{\sc S.~I. Repin}, {\em A posteriori estimates for partial differential
  equations}, vol.~4 of Radon Series on Computational and Applied Mathematics,
  Walter de Gruyter GmbH \& Co. KG, Berlin, 2008.

\bibitem{SchotzauZhu2009}
{\sc D.~Sch{\"o}tzau and L.~Zhu}, {\em A robust a-posteriori error estimator
  for discontinuous {G}alerkin methods for convection--diffusion equations},
  Appl. Numer. Math., 59 (2009), pp.~2236--2255.

\bibitem{StammWihler2010}
{\sc B.~Stamm and T.~Wihler}, {\em {hp}-{O}ptimal discontinuous {G}alerkin
  methods for linear elliptic problems}, Math. Comp., 79 (2010),
  pp.~2117--2133.

\bibitem{TroullierMartins1991}
{\sc N.~Troullier and J.~L. Martins}, {\em Efficient pseudopotentials for
  plane-wave calculations}, Phys. Rev. B, 43 (1991), pp.~1993--2006.

\bibitem{TsuchidaTsukada1995}
{\sc E.~Tsuchida and M.~Tsukada}, {\em Electronic-structure calculations based
  on the finite-element method}, Phys. Rev. B, 52 (1995), pp.~5573--5578.

\bibitem{Verfurth1996}
{\sc R.~Verf{\"u}rth}, {\em A Review of A Posteriori Error Estimation and
  Adaptive Mesh-Refinement Techniques}, Wiley-Teubner, New York, 1996.

\bibitem{WarshelLevitt1976}
{\sc A.~Warshel and M.~Levitt}, {\em Theoretical studies of enzymic reactions:
  dielectric, electrostatic and steric stabilization of the carbonium ion in
  the reaction of lysozyme}, J. Mol. Biol., 103 (1976), pp.~227--249.

\bibitem{Wheeler:78}
{\sc M.F. Wheeler}, {\em An elliptic collocation-finite element method with
  interior penalties}, SIAM J. Numer. Anal., 15 (1978), pp.~152--161.

\bibitem{ZhaoMezaWang2008}
{\sc Z.~Zhao, J.~Meza, and L.W. Wang}, {\em A divide-and-conquer linear scaling
  three-dimensional fragment method for large scale electronic structure
  calculations}, J. Phys. Condens. Matter, 20 (2008), pp.~294203--294210.

\end{thebibliography}

\end{document}